\date{\today}
\let\oldsection\section
\renewcommand\section{\setcounter{equation}{0}\oldsection}
\newtheorem{corollary}{Corollary}[section]
\newtheorem{theorem}{Theorem}[section]
\newtheorem{proposition}{Proposition}[section]
\newtheorem{definition}{Definition}[section]
\newtheorem{remark}{Remark}[section]
\begin{document}

\title[Entropy-bounded solutions 1D compressible Navier-Stokes]{Entropy-bounded solutions to the compressible Navier-Stokes equations: with far field vacuum}

\author{Jinkai~Li}
\address[Jinkai~Li]{Department of Mathematics, The Chinese University of Hong Kong, Hong Kong, P.R.China}
\email{jklimath@gmail.com}

\author{Zhouping~Xin}
\address[Zhouping~Xin]{The Institute of Mathematical Sciences,
The Chinese University of Hong Kong, Hong Kong, P.R.China}
\email{zpxin@ims.cuhk.edu.hk}

\keywords{Full compressible Navier-Stokes equations; global existence
and uniqueness; strong solutions with bounded entropy; far field
vacuum; inhomogeneous Sobolev spaces.}
\subjclass[2010]{35Q30, 76N10.}


\begin{abstract}
The entropy is one of the fundamental states of a fluid and, in the
viscous case, the equation that it satisfies is highly singular in the
region close to the vacuum. In spite of its importance in the gas
dynamics, the mathematical analyses on the behavior of the entropy
near the vacuum region, were rarely carried out; in particular, in the
presence of vacuum, either at the far field or
at some isolated interior points, it was unknown if the
entropy remains its boundedness. The results
obtained in this paper indicate that the ideal
gases retain their uniform
boundedness of the entropy, locally or globally in time,
if the vacuum occurs at the far field only and the density decays
slowly enough at the far field. Precisely, we consider the Cauchy
problem to the one-dimensional full compressible Navier-Stokes
equations without heat conduction, and establish the local and global
existence and uniqueness of entropy-bounded solutions, in the presence
of vacuum at the far field only. It is also shown that, different from
the case that with compactly supported initial density,
the compressible Navier-Stokes equations, with slowly decaying
initial density, can propagate the regularities
in the inhomogeneous Sobolev spaces.
\end{abstract}

\maketitle

\allowdisplaybreaks

\section{Introduction}
Let $\rho, u,$ and $\theta$ be the density, velocity, and temperature
of a fluid, and denote $t$ and
$x$ as the time and spatial variables.
Then, the full compressible Navier-Stokes equations read as
\begin{eqnarray}
&&\partial_t\rho+\text{div}\,(\rho u)=0,\label{GCNSrho}\\
&&\partial_t(\rho u)+\text{div}\,(\rho u\otimes u)=\text{div}
  \,\mathbb T+\rho f,\label{GCNSu}\\
&&\partial_t(\rho E)+\text{div}\,(\rho
uE)+\text{div}\,q=\text{div}\,(\mathbb Tu)+\rho\mathcal Q\label{GCNSE},
\end{eqnarray}
where $E=\frac{|u|^2}{2}+e$ is the specific total energy,
$e=e(\rho,\theta)$ is the specific internal energy, $\mathbb T$
is the stress tensor, $q$ is the internal
energy flux directly related to the transfer
of heat, $f$ is the external force, and $\mathcal Q$ is the external
heat source.

By (\ref{GCNSrho})--(\ref{GCNSE}), one
can obtain the following equation for $e$:
\begin{equation}
  \partial_t(\rho e)+\text{div}\,(\rho ue)+p\text{div}\,u+\text{div}\,q
  =\mathbb S:\nabla u+\rho\mathcal Q. \label{GCNSe}
\end{equation}
The stress tensor $\mathbb T$
is given by
\begin{eqnarray*}
\mathbb T=\mathbb S-pI, \quad \mathbb S=2\mu\mathbb
Du+\lambda\text{div}\,uI,\quad \mathbb Du=\frac12(\nabla u+(\nabla
u)^T),
\end{eqnarray*}
where $I$ is the $3\times3$ identity matrix, $p$ is the pressure, and
$\mu$ and $\lambda$ are viscosity coefficients, satisfying $\mu>0$
and $2\mu+3\lambda\geq0$. In this paper, we consider the ideal gases,
and state equations are
$$
p=R\rho\theta, \quad e=c_v\theta,
$$
for two positive constants $R$ and $c_v$. Then, it follows
from (\ref{GCNSe}) that
\begin{equation}
c_v[\partial_t(\rho\theta)+\text{div}\,(\rho
u\theta)]+p\text{div}\,u+\text{div}\,q=\mathbb S:\nabla u+\rho\mathcal
Q. \label{GCNStheta}
\end{equation}

Recalling the state equations for $p$ and $e$, by the Gibb's equation
$\theta Ds=De+pD(\frac1\rho)$, where $s$ is the state of the 
entropy, one
has the following relationship between $p$ and $s$:
$$
p=Ae^{\frac{s}{c_v}}\rho^\gamma,
$$
for some positive constant $A$, where $\gamma-1=\frac{R}{c_v}$. Thanks
to this, and using the state equations for $p$ and $e$ again,
one can derive from (\ref{GCNSrho}), (\ref{GCNSu}), and
(\ref{GCNStheta}) the following
equation for the entropy $s$:
\begin{equation*}
\partial_t(\rho s)+\text{div}\,(\rho s u)+\text{div}\,\left(\frac
q\theta\right)=\frac1\theta\left(\mathbb S:\nabla
u-\frac{q\cdot\nabla\theta}{\theta}\right)+\rho\frac{\mathcal
Q}{\theta}.
\end{equation*}
For the internal energy flux $q$,
by the Fourier's law of heat conduction, we assume that
$q=-\kappa\nabla\theta,$
where $\kappa\geq0$ is the heat conduction coefficient.

There are extensive literatures on the mathematical analyses of the
compressible Navier-Stokes equations. In the absence of vacuum, that is
the density is bounded from below by some positive constant, the local
well-posedness results
were proved by Nash \cite{NASH62}, Itaya \cite{ITAYA71},
Vol'pert--Hudjaev \cite{VOLHUD72}, Tani \cite{TANI77}, Valli
\cite{VALLI82}, and Lukaszewicz \cite{LUKAS84}.
The first global well-posedness result was established by
Kazhikhov--Shelukhin \cite{KAZHIKOV77}, where they proved the
global well-posedness of strong solutions of the initial boundary value
problem to the one-dimensional compressible
Navier-Stokes equations, for arbitrary $H^1$ initial data, and the
corresponding result for the Cauchy problem was later
proved by Kazhikhov \cite{KAZHIKOV82}; global well-posedness of weak
solutions to the one-dimensional
compressible Navier-Stokes equations was proved by
Zlotnik--Amosov \cite{ZLOAMO97,ZLOAMO98} and by Chen--Hoff--Trivisa
\cite{CHEHOFTRI00} for the initial boundary
value problems, and by Jiang--Zlotnik \cite{JIAZLO04} for the Cauchy
problem. Large time behavior of solutions to the one dimensional
compressible Navier-Stokes equations with large initial data was
recently proved by Li--Liang \cite{LILIANG16}.
For the multi-dimensional case, the global well-posedness
of strong solutions were established only for
small perturbed initial data around some non-vacuum equilibrium or for
spherically symmetric large initial data, see Matsumura--Nishida
\cite{MATNIS80,MATNIS81,MATNIS82,MATNIS83}, Ponce \cite{PONCE85},
Valli--Zajaczkowski \cite{VALZAJ86}, Deckelnick \cite{DECK92}, Jiang
\cite{JIANG96}, Hoff \cite{HOFF97}, Kobayashi--Shibata \cite{KOBSHI99},
Danchin \cite{DANCHI01}, and Chikami--Danchin \cite{CHIDAN15}.
One of the major differences between one dimensional case from the
multi-dimensional one is that if no vacuum is contained initially, then
no vacuum will form later on in finite time, for the one dimensional
compressible Navier-Stokes equations, as shown by Hoff-Smoller
\cite{HS}, while the similar result remains unknown 
for the multi-dimensional case.

In the presence of vacuum, that is the density may vanish
on some set, or tends to zero at the far field, the breakthrough
was made by Lions \cite{LIONS93,LIONS98}, where he proved
the global existence of weak solutions to the isentropic compressible
Navier-Stokes equations, with adiabatic constant $\gamma\geq\frac95$;
the requirement on $\gamma$ was later relaxed by
Feireisl--Novotn\'y--Petzeltov\'a \cite{FEIREISL01} to $\gamma>\frac32$,
and further by Jiang--Zhang \cite{JIAZHA03} to $\gamma>1$ but only
for the axisymmetric solutions. For the full compressible Navier-Stokes
equations, global existence of the variational weak solutions was proved
by Feireisl \cite{FEIREISL04P,FEIREISL04B}; however, due to the
assumptions on the constitutive equations made in
\cite{FEIREISL04P,FEIREISL04B}, the ideal gases were not included there.
Local well-posedness of strong solutions, in the presence of vacuum,
was proved first
for the isentropic case by Salvi--Stra$\check{\text s}$kraba
\cite{SALSTR93}, Cho--Choe--Kim \cite{CHOKIM04}, and Cho--Kim
\cite{CHOKIM06-1}, and later for the polytropic case by Cho--Kim
\cite{CHOKIM06-2}. It should be noticed that, in
\cite{SALSTR93,CHOKIM04,CHOKIM06-1,CHOKIM06-2}, the
solutions were established in the homogeneous Sobolev spaces, that is,
it is $\sqrt\rho u$ rather than $u$ itself that has the $L^\infty(0,T;
L^2)$ regularity. Generally, one can not expect that the strong
solutions to the compressible Navier-Stokes equations lie in
the inhomogeneous Sobolev spaces, if the initial density has compact
support. Actually, it was proved recently by Li--Wang--Xin
\cite{LWX} that: neither isentropic nor the full compressible
Navier-Stokes equations on $\mathbb R$,
with $\kappa=0$ for the full case,
has any solution $(\rho, u,\theta)$ in the inhomogeneous
Sobolev spaces $C^1([0,T];
H^m(\mathbb R))$, with $m>2$, if $\rho_0$ is compactly supported and
some appropriate conditions on the initial data
are satisfied; the $N$-dimensional full
compressible Navier-Stokes equations, with positive heat
conduction, have no solution $(\rho, u,\theta)$, with finite entropy,
in the inhomogeneous Sobolev spaces
$C^1([0,T]; H^m(\mathbb R^N))$, with $m>[\frac N2]+2$,
if $\rho_0$ is compactly supported.
Global existence of strong and classical solutions
to the compressible Navier-Stokes equations, in the presence of
initial vacuum, was first proved by Huang--Li--Xin \cite{HLX12},
where they established the global well-posedness of
strong and classical solutions, with small initial
basic energy, to the three-dimensional isentropic compressible
Navier-Stokes equations, see Li--Xin
\cite{LIXIN13} for further developments. However, due to the finite
in time blow-up results by Xin \cite{XIN98} and Xin--Yan
\cite{XINYAN13}, one can not expect the global well-posedness of
classical solutions, in either inhomogeneous or homogeneous Sobolev
spaces, to the full compressible Navier-Stokes equations in the presence
of vacuum. In particular, it was proved in
\cite{XINYAN13} that, for the full compressible Navier-Stokes equations,
if initially there is an isolated
mass group surrounded by the vacuum region, then for the
case $\kappa=0$, any classical solution
must blow-up in finite time, and for the case $\kappa
>0$, any classical
solutions, with finite entropy in the vacuum region, must blow-up
in finite time. Global existence of strong solutions
to the heat conducting
full compressible Navier-Stokes equations were obtained by
Huang--Li \cite{HUANGLI11} for the case that with non-vacuum far
field, and by Wen--Zhu \cite{WENZHU17} for the case that with vacuum
far field. The spaces of the solutions obtained in
\cite{HUANGLI11,WENZHU17} can not exclude the possibility
that the entropy is infinite somewhere
in the vacuum region, even if it is
initially finite; in fact, due to the results in \cite{XINYAN13},
the corresponding entropy in \cite{HUANGLI11,WENZHU17} must be
infinite somewhere in the vacuum region, if initially
there is an isolated
mass group surrounded by the vacuum region.

Recalling the state equations for the ideal gases, the entropy can be
expressed in terms of the density and temperature as
$$
s=c_v\left(\log\frac RA+\log \theta-(\gamma-1)\log\rho\right),
$$
from which one can see that the entropy may develop singularities or
even is not well defined in the vacuum region and, consequently,
it is impossible to obtain the desired regularities of $s$ merely
from those of $\theta$ and $\rho$, in the presence of vacuum. Therefore,
though the vacuums are allowed for the solutions established
in \cite{CHOKIM06-2,HUANGLI11,WENZHU17} by choosing $(\rho, u, \theta)$
as the unknowns, no regularities of the entropy $s$ can be implied in
the vacuum region there and, due to the result in \cite{XINYAN13},
the entropy of the solutions obtained in \cite{HUANGLI11,WENZHU17}
must be infinite in the vacuum region.
To the best of our knowledge, in the existing
literatures, there were no such results that provided the uniform lower
or upper bounds of the entropy near the vacuum.

As stated in the previous paragraph, since
the entropy can not be even defined at the places where the density
vanishes, it may be unreasonable to study the entropy
for the full compressible Navier-Stokes equations of the ideal gases,
if the vacuum region is an open set; however, when the vacuum occurs
only at some isolated interior points or at the far field and if,
moreover, the entropy behaves well when the fluid tends to these vacuum
points or to the far field, it is still possible to define the entropy
there. Therefore, a natural question is what kind of behavior of the
entropy, at the vacuum far field or near the isolated interior vacuum
points, can be preserved by the ideal gases, when the flow evolves.
The aim of this paper is to give some answers to this question and, in
particular, as indicated in our main results, the ideal gases
can preserve their boundedness of the entropy, locally or globally in
time, if the vacuum happens at the far field only.

Another question that we want to address in this paper is: under what
kind of assumptions on the initial density, beyond the the case that
the initial density is uniformly away from the vacuum, the compressible
Navier-Stokes equations admit solutions in the inhomogeneous Sobolev
spaces. On one hand,
recalling the result in \cite{LWX}, for the case that the initial
density has a compact support, the compressible Navier-Stokes
equations are ill-posed in the inhomogeneous Sobolev spaces; on the other hand, for the case that the initial
density is uniformly away from the vacuum, the
compressible Navier-Stokes equations are well-posed in the
inhomogeneous Sobolev spaces. Comparing these two cases, by
understanding the case that with compact support as having
supper fast decay at the far field, it is natural to ask \textit{if
the fast decay of the density can cause the ill-posedness of the
compressible Navier-Stokes equations in the inhomogeneous spaces,
or if the compressible Navier-Stokes equations will be well-posed
in the inhomogeneous spaces when the initial density
decays slowly at the far field}. We will show in this paper that if
the initial density decays slower than $\frac{K_0}{|x|^2}$, for some
positive constant $K_0$, at the far field, then
the compressible Navier-Stokes equations are indeed well-posed in the
inhomogeneous Sobolev spaces, where $K_0$ is an arbitrary positive
constant. Note that this is consistent with the well-posedness result
for the compressible Navier-Stokes equations in the inhomogeneous
Sobolev spaces in the absence of vacuum.

In this paper, we consider the one dimensional case, and assume that there are no external forces and
heating source, i.e.\,$f\equiv\mathcal Q\equiv0$, and that
there is no heat conduction in the fluids, that is $\kappa=0$,
while the muti-dimensional case and the cases that with heat conduction
will be studied in the further works.
Under these assumptions, the system considered in this paper is the
following one-dimensional compressible Navier-Stokes equations:
\begin{eqnarray}
\rho_t+(\rho u)_x&=&0,\label{ECNSrho}\\
\rho(u_t+uu_x)-\mu u_{xx}+p_x&=&0,\label{ECNSu}\\
c_v[(\rho\theta)_t+(\rho u\theta)_x]+pu_x&=&\mu(u_x)^2.\label{ECNStheta}
\end{eqnarray}

Recalling the state equation $p=R\rho\theta$ and
$c_v=\frac{R}{\gamma-1}$, equation (\ref{ECNStheta}) can be rewritten
equivalently as a equation for the pressure $p$, that is
\begin{equation}\label{ECNSp}
  p_t+up_x+\gamma u_xp=\mu(\gamma-1)(u_x)^2.
\end{equation}
As will be seen later, it is more convenient to use
(\ref{ECNSp}), instead of (\ref{ECNStheta}), to state and prove
the results, in other words, we will use the pressure, instead of the
temperature, as one of the unknowns, throughout this paper; however,
it should be mentioned that, as we consider the case that the vacuum happens only at the far field, (\ref{ECNSp}) is equivalent to (\ref{ECNStheta}).

We will consider the Cauchy problem and, therefore, complement system system (\ref{ECNSrho}), (\ref{ECNSu}), and (\ref{ECNSp}), with the following initial condition
\begin{equation}
  \label{ECNSic}
  (\rho, u, p)|_{t=0}=(\rho_0, u_0, p_0).
\end{equation}

Before stating the main results, we first clarify some necessary notations
being used throughout this paper.
For $1\leq q\leq\infty$ and positive integer $m$, we use $L^q=L^q(\mathbb R)$ and $W^{1,q}=W^{m,q}(\mathbb R)$ to denote the standard Lebesgue and Sobolev spaces, respectively, and in the case that $q=2$, we use $H^m$
instead of $W^{m,2}$. For simplicity, we also use the notations $L^q$ and
$H^m$ to denote the $N$ product spaces $(L^q)^N$ and $(H^m)^N$, respectively.
We always use $\|u\|_q$ to denote the $L^q$ norm of $u$. For shortening the
expressions, we sometimes use $\|(f_1,f_2,\cdots,f_n)\|_X$ to denote the sum
$\sum_{i=1}^N\|f_i\|_X$ or its equivalent norm $\left(\sum_{i=1}^N\|f_i\|_X^2
\right)^{\frac12}$.

We have the following two theorems on the local and global well-posedness of solutions to system (\ref{ECNSrho}), (\ref{ECNSu}), and (\ref{ECNSp}), subject to (\ref{ECNSic}):

\begin{theorem}[\textbf{Local well-posedness}]
\label{localEuler}
Assume that
\begin{equation}
  \inf_{y\in(-R,R)}\rho_0(y)>0,\quad\forall R\in(0,\infty),\quad\left(\rho_0',\sqrt{\rho_0}u_0, u_0',p_0,\frac{p_0'}{\sqrt{\rho_0}}\right)\in L^2,\label{ASM1}
\end{equation}
and denote $F_0:=\mu u_0'-p_0$.

Then, the following two hold:

(i) There is a unique local solution $(\rho, u, p)$ to system (\ref{ECNSrho})--(\ref{ECNSu}) and (\ref{ECNSp}), subject to (\ref{ECNSic}), satisfying
\begin{eqnarray*}
&\rho-\rho_0\in C([0,T];L^2),\quad\rho_x\in L^\infty(0,T; L^2),\quad\rho_t \in L^\infty(0,T; L^2((-r,r))), \\
&\sqrt{\rho}u\in C([0,T]; L^2),\quad u_x\in L^\infty(0,T; L^2)\cap L^2(0,T; H^1),\quad  \sqrt{\rho}u_t\in L^2(0,T; L^2),\\
&p-p_0\in C([0,T]; L^2),\quad p_x\in L^\infty(0,T; L^2),\quad p_t\in L^4(0,T; L^2((-r,r))),
\end{eqnarray*}
for all $r\in(0,\infty)$,where $T$ is a positive constant depending only on $\gamma, \mu, \|\rho_0\|_\infty, \|v_0'\|_2,$ $\|p_0\|_2$, and $\|p_0\|_\infty$.

(ii) Assume in addition that
\begin{equation}\label{ASM2}
\left|\left(\frac{1}{\sqrt{\rho_0}}\right)'(y)\right|\leq \frac{K_0}{2},\quad \forall y\in \mathbb R, \quad\rho_0^{-\frac\delta2}F_0\in L^2,
\end{equation}
for two positive constants $\delta$ and $K_0$. Then, $(\rho,u,p)$ has the additional regularities
\begin{equation}\label{CON1}
  \left\{
  \begin{aligned}
    &u\in L^\infty(0,T; H^1),&&\mbox{if }u_0\in H^1
    \mbox{ and }\delta\geq1,\\
    &\theta \in L^\infty(0,T; H^1),&&\mbox{if }\theta_0\in H^1\mbox{ and }\delta\geq1,\\
    &s\in L^\infty(0,T; L^\infty),&&\mbox{if }s_0\in L^\infty\mbox{ and }\delta\geq\gamma,
  \end{aligned}
  \right.
\end{equation}
where $\theta:=\frac{p}{R\rho}$ and $s:=c_v\log\left(\frac{p}{A\rho^\gamma}\right)$, respectively, are the temperature and entropy.
\end{theorem}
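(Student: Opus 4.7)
The plan is to prove the theorem by a standard approximation scheme combined with uniform a priori estimates. Replacing $\rho_0$ by $\rho_0 + \epsilon$ on a bounded interval $(-R,R)$ with suitable boundary conditions, classical local well-posedness for the nonvacuum 1D problem (Kazhikhov--Shelukhin type) gives smooth approximate solutions $(\rho^{\epsilon, R}, u^{\epsilon, R}, p^{\epsilon, R})$; the real content is the derivation of a priori estimates uniform in $(\epsilon, R)$ matching the claimed regularity, after which one passes to the limit $\epsilon \to 0$, $R \to \infty$ via Aubin--Lions compactness. Uniqueness follows by a routine energy estimate on the difference of two solutions.

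For part (i), the a priori estimates would be organised around the effective viscous flux $F := \mu u_x - p$, which by the momentum equation satisfies $F_x = \rho(u_t + u u_x)$. Testing the momentum equation by $u$ gives $\sqrt{\rho}u \in L^\infty_t L^2$ together with $u_x \in L^2_t L^2$; the $L^\infty$ bound on $p$ comes from integrating the transport equation for $p$ along characteristics once $\|u_x\|_\infty$ is controlled via one-dimensional Sobolev embedding. Testing the momentum equation by $u_t$ then yields $u_x \in L^\infty_t L^2$ and $\sqrt{\rho}u_t \in L^2_t L^2$. The bounds on $\rho_x$ and $p_x$ in $L^\infty_t L^2$ come from differentiating the continuity and pressure equations in $x$ and testing against $\rho_x$ and $p_x/\rho$ respectively; the hypothesis $p_0'/\sqrt{\rho_0} \in L^2$ is essential for the latter weighted estimate to close. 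All estimates hold on a short time interval $T$ determined by the assumed data via a Gronwall argument.

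For part (ii), the extra hypothesis $|(\rho_0^{-1/2})'| \leq K_0/2$ enforces a pointwise linear-growth bound on $\rho_0^{-1/2}$, i.e.\ a lower bound of order $|x|^{-2}$ on $\rho_0$ at the far field. Setting $g := \rho^{-1/2}$, the continuity equation gives $g_t + u g_x = \tfrac12 g u_x$, and differentiating in $x$ one propagates a pointwise bound on $g_x$ via the $L^1_t L^\infty_x$ control of $u_x$ from part (i). Combined with $u_0 \in H^1$, a weighted energy estimate then converts $\sqrt{\rho} u \in L^\infty_t L^2$ into $u \in L^\infty_t L^2$, hence $u \in L^\infty_t H^1$; the $\theta \in L^\infty_t H^1$ claim follows from $\theta = p/(R\rho)$ and the temperature transport equation. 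For the entropy, the material-derivative identity $\dot s = \mu u_x^2/(\rho \theta) = (R/\mu)(F^2/p + 2F + p)$ reduces matters to controlling $F^2/p = e^{-s/c_v} F^2/(A\rho^\gamma)$, which requires propagating the weighted quantity $G := \rho^{-\delta/2} F$. A direct computation yields $\dot G = \tfrac{\delta - 2\gamma}{2} u_x G + \mu \rho^{-\delta/2} (F_x/\rho)_x$, and weighted energy estimates give first an $L^\infty_t L^2_x$ bound on $G$ and then an $L^\infty_t L^\infty_x$ bound via a spatial-derivative estimate and Sobolev embedding. The threshold $\delta \geq \gamma$ is exactly what is needed to make $F^2/p$ pointwise bounded in terms of $G$ and the running $s$-bound, closing a Gronwall inequality for $s$ along characteristics.

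The main obstacle is the entropy estimate: the weight $\rho^{-\delta/2}$ is singular near the far-field vacuum, and the bound for $s$ couples back to the weighted estimate on $G$ through the factor $e^{s/c_v}$ appearing in $p$, forcing a simultaneous Gronwall loop for $s$ and $G$. A secondary technical issue is propagating the growth bound on $\rho^{-1/2}$ compatibly with the local regularity from part (i), which requires pointwise control of $\rho^{-1/2} u_{xx}$ obtained via $u_{xx} = (F_x + p_x)/\mu$; the thresholds $\delta \geq 1$ and $\delta \geq \gamma$ appear precisely as the weakest weights under which the respective weighted $L^2$ estimates on $G$ close.
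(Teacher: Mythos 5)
Your overall strategy — approximate to avoid vacuum, organize a priori estimates around a weighted effective viscous flux, and pass to the limit — is in the same spirit as the paper's. But the paper does something you did not: it transfers the entire problem to Lagrangian coordinates \emph{before} doing any analysis, so that the system becomes (\ref{lcnsJ})--(\ref{lcnspi}) with the time-independent coefficient $\varrho_0$, and proves Theorem~\ref{LOCAL} there; Theorem~\ref{localEuler} is then deduced by transfer (Remark~\ref{remarkeulerlocal}). This is not a cosmetic change. In Lagrangian variables the weight $\varrho_0^{-\delta/2}$ in the key energy inequality (\ref{G'.3}) is fixed in time, so the hypothesis $|(\varrho_0^{-1/2})'|\le K_0/2$ is a one-time condition on the data; it never needs to be propagated, and the weighted estimate on $G$ in Proposition~\ref{propGex} closes against data only. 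In your Eulerian formulation the weight $\rho^{-\delta/2}$ evolves, and you are forced to propagate the pointwise bound $|(\rho^{-1/2})_x|\le K_0/2$ in time. Your proposed route — controlling $\rho^{-1/2}u_{xx}$ in $L^1_tL^\infty_x$ via $u_{xx}=(F_x+p_x)/\mu$ — does not follow from the part~(i) estimates (you have $p_x/\sqrt{\rho}$ in $L^2$, not $\rho^{-1/2}p_x$ in $L^\infty$, and similarly for $F_x$), so this step is a genuine gap, not just a technical nuisance; it is exactly the difficulty that the Lagrangian change of variables is designed to eliminate.

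Two further points. First, your passage from $\sqrt{\rho}u\in L^\infty_tL^2$ to $u\in L^\infty_tL^2$ "via a weighted energy estimate" is not correct as stated: when $\rho\to0$ at infinity there is no pointwise inequality turning a $\sqrt{\rho}$-weighted $L^2$ bound into an unweighted one. The mechanism in the paper is different: since $\varrho_0 v_t=G_y$ and the $\delta\ge1$ weighted estimate gives $G_y/\varrho_0\in L^2_tL^2$, one has $v_t\in L^2_tL^2$ and then $v(t)=v_0+\int_0^tv_t$ lands in $L^\infty_tL^2$ provided $v_0\in L^2$. If that is what you intended, you should say so; the "conversion" phrasing describes something that is false. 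Second, your entropy argument is more coupled than necessary. In the paper the $L^4(0,T;L^\infty)$ bound on $\varrho_0^{-\gamma/2}G$ is established \emph{before} and independently of the entropy, and then the upper and lower bounds on $\pi/\varrho^\gamma$ follow directly from (\ref{PI.1}) and from $(J^\gamma\pi)_t=(\gamma-1)\mu J^{\gamma-2}(v_y)^2\ge0$; no simultaneous Gronwall loop between $s$ and $G$ is needed. You should also record the lower bound on $s$ (from $\dot s\ge0$) explicitly, as it is what controls the factor $e^{-s/c_v}$ in your version of the argument.

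\end{document}
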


\begin{theorem}[\textbf{Global well-posedness}]
\label{globalEuler}
Assume that (\ref{ASM1}) holds, and that
both $\rho_0$ and $p_0$ are in $L^1.$ Then, there is a unique global
solution $(\rho, u, p)$ to system (\ref{ECNSrho})--(\ref{ECNSu}) and
(\ref{ECNSp}), subject to (\ref{ECNSic}), satisfying the regularities
stated in (i) of Theorem \ref{localEuler}, for any $T\in(0,\infty)$.
Moreover, if assume in addition that (\ref{ASM2}) holds, then
(\ref{CON1}) holds for any $T\in(0,\infty)$.
\end{theorem}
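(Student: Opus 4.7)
The plan is to combine the local existence from Theorem \ref{localEuler} with a continuation argument: let $(\rho,u,p)$ be the local solution on a maximal interval $[0,T^*)$, and suppose for contradiction that $T^*<\infty$. I will derive, under the additional assumption $\rho_0,p_0\in L^1$, uniform-in-$t$ bounds on $[0,T]$ for every $T<\infty$ of all the norms listed in (i) of Theorem \ref{localEuler}, and then reapply Theorem \ref{localEuler} with initial time $t_0$ slightly less than $T^*$ to extend the solution past $T^*$. Uniqueness of the global solution is inherited from the local uniqueness in Theorem \ref{localEuler}.

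First I would collect the two conservation laws enabled by $\rho_0,p_0\in L^1$. Integrating (\ref{ECNSrho}) gives $\|\rho(t)\|_1=\|\rho_0\|_1$; multiplying (\ref{ECNSu}) by $u$, integrating (\ref{ECNSp})/$(\gamma-1)$, and adding cancels the work term $\int pu_x\,dx$ so that $\int(\tfrac12\rho u^2+\tfrac{p}{\gamma-1})\,dx\equiv E_0$. Hence $\|\sqrt{\rho}\,u(t)\|_2^2+\|p(t)\|_1\le C(E_0,\gamma)$ for all $t$.

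The crucial step is a pointwise upper bound for $\rho$, independent of the local existence time but allowed to grow with $T$. Introducing the effective viscous flux $F=\mu u_x-p$, the momentum equation becomes $\rho\dot u=F_x$ where $\dot{~}=\partial_t+u\partial_x$, while division of (\ref{ECNSrho}) by $\rho$ gives $\mu(\log\rho)^{\cdot}=-\mu u_x=-(F+p)$. Using the global mass coordinate $\eta(x,t)=\int_{-\infty}^x\rho\,dy\in(0,\|\rho_0\|_1)$ (well defined by the $L^1$ bound on $\rho$), one can represent $F$ along particle trajectories by a Kazhikhov--Shelukhin type formula and integrate the ODE for $\log\rho$ to obtain $\|\rho(t)\|_\infty\le C(T)$ on $[0,T]$, with $C(T)$ depending only on $T$, $E_0$, $\|\rho_0\|_\infty$, $\|\rho_0\|_1$, $\mu$, and $\gamma$. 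Feeding this into (\ref{ECNSp}) along characteristics and using $\int_0^T\|u_x\|_\infty\,d\tau\le C$ (obtained from $F$ together with the $\|\rho\|_\infty$ and $E_0$ bounds) then yields $\|p(t)\|_\infty\le C(T)$ on $[0,T]$.

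With $\|\rho\|_\infty$ and $\|p\|_\infty$ under control on $[0,T]$, the higher-order $L^2$-energy estimates carried out in the proof of Theorem \ref{localEuler}(i) for $\rho_x$, $u_x$, $\sqrt{\rho}\,u_t$, $p_x$ become $T$-global: the Gronwall-type inequalities that closed locally close here because all constants now depend only on the quantities just bounded. This gives the regularities in (i) of Theorem \ref{localEuler} on $[0,T]$ and thereby contradicts $T^*<\infty$. For the additional claim (\ref{CON1}) under (\ref{ASM2}), one repeats verbatim the weighted estimates of Theorem \ref{localEuler}(ii) for $F$, for $u\in H^1$, for $\theta\in H^1$, and for $s\in L^\infty$ on each $[0,T]$; these estimates depend only on $\|\rho\|_\infty$, $\|p\|_\infty$, the $E_0$-level energy bounds, the constant $K_0$, and $\delta$, all of which are now globally controlled.

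The main obstacle is the pointwise upper bound on $\rho$. Because the density vanishes only at the far field rather than on a compact set, standard compactly-supported Lagrangian box arguments and positive lower bounds on $\rho$ are unavailable; the representation of $F$ via the mass coordinate has to be tracked along the entire real line, and the boundary terms at infinity must be shown to vanish using precisely the $L^1$ decay of $\rho_0$ and $p_0$ together with the conserved energy. Once this is done, the remaining higher-regularity propagation is essentially a replay of the local theory.
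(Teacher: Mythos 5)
Your overall architecture matches the paper's: conservation of mass and energy, a pointwise upper bound on $\rho$ via a Kazhikhov--Shelukhin type representation (in the paper this is the time-uniform lower bound $\inf_{y}J(y,t)\geq e^{-\frac{2\sqrt 2}{\mu}\sqrt{\mathcal E_0}\|\varrho_0\|_1}$ of Proposition \ref{lbdJ}, actually not growing with $T$), a global-in-time energy estimate for the effective viscous flux (Propositions \ref{estG}, \ref{estGextra}), closure of the remaining $L^2$ estimates (Proposition \ref{estother}), and then a continuation argument. The main cosmetic difference is that the paper runs all of this in Lagrangian coordinates and then transfers to Euler, precisely because the whole section-4 and section-5 machinery operates on $J$, $v$, $\pi$.

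There is, however, a genuine gap in a step you describe almost in passing. You assert that the energy identity (and implicitly the $F$-energy estimate) is obtained by showing that ``the boundary terms at infinity must be shown to vanish using precisely the $L^1$ decay of $\rho_0$ and $p_0$ together with the conserved energy.'' That does not work as stated. The velocity here is only known to satisfy $\sqrt{\rho}\,u\in L^2$ and $u_x\in L^2$, which permits $|u(x)|\lesssim\sqrt{|x|}$; when you multiply by a cutoff $\eta_r$ supported on $\{r<|x|<2r\}$ with $|\eta_r'|\lesssim 1/r$, the commutator terms involve $u\,\eta_r\eta_r'$ times flux quantities, and these are \emph{not} controlled by $L^1$ decay of $\rho_0,p_0$ alone. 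In the paper this is handled by the extra hypothesis $\varrho_0(y)\geq A_0/(1+|y|)^2$ in (H4), which is used only to deduce the pointwise bound $|\eta_r'(y)|\leq M_0\sqrt{\varrho_0(y)}$ (the inequality (\ref{eta})) that converts the commutator into an integral of $L^1$ quantities like $\sqrt{\varrho_0}\,v\,\pi$ and $\sqrt{\varrho_0}\,v\,v_y$ over $r<|y|<2r$, which then vanishes as $r\to\infty$. For Theorem \ref{globalEuler}, where no such lower bound of $\rho_0$ is assumed, Remark \ref{remarkrholowerbound} explicitly states the bound is removed by approximating the Cauchy problem by a sequence of initial-boundary value problems (where the integration by parts is trivially valid) and passing to the limit. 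Either the pointwise lower bound on $\varrho_0$ or the IBVP truncation is needed; $L^1$ decay by itself leaves the boundary contributions out of control, and this affects both the basic energy identity of Proposition \ref{basic} and the global estimate on $G$ in Propositions \ref{estG} and \ref{estGextra}, which are the backbone of the continuation argument. You would need to supply one of these two devices to make the proof rigorous.

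A second, minor inaccuracy: you treat the $\rho$-upper bound as $T$-dependent and feed a $T$-dependent bound into the $F$-estimate. In fact the argument of Proposition \ref{lbdJ} produces a bound $\inf_y J\geq c_0$ that is uniform in time, because the representation $J(z,t)=J(z_0,t)\exp\{\frac1\mu\int_{z_0}^z\varrho_0(v-v_0)\,dz\}\exp\{\frac1\mu\int_0^t(\pi-\pi(z_0,\tau))\,d\tau\}$ lets you take $z_0\to\infty$ where $J\to 1$ and $\pi\to0$, and then drop $\pi\geq0$; only the momentum/energy factor survives. Not harmful here, but it signals that the derivation sketched in your paragraph is not the one that actually gives the clean bound.
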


Theorem \ref{localEuler} and Theorem \ref{globalEuler}, respectively, are the corollaries of the more accurate results Theorem \ref{LOCAL} and Theorem \ref{GLOBAL}, being stated in the next
section in the Lagrangian coordinates. Therefore, in the rest of this paper, we focus on studying the compressible Navier-Stokes equations
in the Lagrangian coordinates.

The rest of this paper is arranged as follows: in Section \ref{secresult}, we reformulate the system in the Lagrangian
coordinates, and state our main results; in Section \ref{seclocnov},
we consider the system in the absence of
vacuum, and carry out some a priori estimates, which are
independent of the positive lower bound of the density;
the proof of Theorem \ref{LOCAL} is given in Section \ref{seclocv},
while that of Theorem \ref{GLOBAL} is given in the last section.

\section{Reformulation in Lagrangian coordinates and main results}
\label{secresult}
Let $y$ be the Lagrangian coordinate, and define the coordinate transform
between the Lagrangian coordinate $y$ and the Euler coordinate $x$ as
\begin{equation*}
  x=\eta(y,t),
\end{equation*}
where
$\eta(y,t)$ is the flow map determined by $u$, that is,
\begin{equation*}\label{flowmap}
  \left\{
  \begin{array}{l}
  \partial_t\eta(y,t)=u(\eta(y,t),t),\\
  \eta(y,0)=y.
  \end{array}
  \right.
\end{equation*}

Denote by $\varrho, v,$ and $\pi$ the density, velocity, and pressure, respectively, in the Lagrangian coordinate, that is we define
\begin{equation*}
  \varrho(y,t):=\rho(\eta(y,t),t),\quad v(y,t):=u(\eta(y,t),t), \quad \pi(y,t):=p(\eta(y,t),t). \label{newunknown}
\end{equation*}
Recalling the definition of $\eta(y,t)$, by straightforward calculations, one can check that
\begin{eqnarray*}
  (u_x,p_x)=\left(\frac{v_y}{\eta_y},\frac{\pi_y}{\eta_y}\right), \quad u_{xx}=\frac{1}{\eta_y}\left(\frac{v_y}
  {\eta_y}\right)_y,\\
  \rho_t+u\rho_x=\varrho_t,\quad u_t+uu_x=v_t,\quad p_t+up_x=\pi_t.
\end{eqnarray*}
Define the function $J=J(y,t)$ as
\begin{equation*}
  J(y,t)=\eta_y(y,t),
\end{equation*}
then it follows that 
$$
J_t=v_y.
$$

Thanks to the above, system (\ref{ECNSrho}), (\ref{ECNSu}), and (\ref{ECNSp}) can be rewritten in the Lagrangian coordinate as
\begin{eqnarray}
J_t&=&v_y\label{LCNSJ}\\
\varrho_t+\frac{v_y}{J}\varrho&=&0,\label{LCNSvarrho}\\
\varrho v_t-\frac{\mu}{J}\left(\frac{v_y}{J}\right)_y+\frac{\pi_y}{J}&=&0,\label{LCNSv}\\
\pi_t+\gamma\frac{v_y}{J}\pi&=&(\gamma-1)\mu\left(\frac{v_y}{J}\right)^2. \label{LCNSpi}
\end{eqnarray}

One can further reduce the above system to a simpler version. In fact,
due to (\ref{LCNSJ}) and (\ref{LCNSvarrho}), it holds that
$$
(J\varrho)_t=J_t\varrho+J\varrho_t=v_y\varrho-J\frac{v_y}{J}\varrho=0,
$$
from which, by setting $\varrho|_{t=0}=\varrho_0$ and noticing that $J|_{t=0}=1$, we have
\begin{equation*}
  J\varrho=\varrho_0.
\end{equation*}
Therefore, one can drop (\ref{LCNSvarrho}) from system (\ref{LCNSJ})--(\ref{LCNSpi}), and rewrite (\ref{LCNSv}) as
\begin{equation*}
  \varrho_0v_t-\mu\left(\frac{v_y}{J}\right)_y+ \pi_y =0.
\end{equation*}

In summary, we only need to consider the following system
\begin{eqnarray}
J_t&=&v_y,\label{lcnsJ}\\
\varrho_0v_t-\mu\left(\frac{v_y}{J}\right)_y+\pi_y&=&0,\label{lcnsv}\\
\pi_t+\gamma \frac{v_y}{J}\pi&=&(\gamma-1)\mu\left(\frac{v_y}{J}\right)^2.\label{lcnspi}
\end{eqnarray}
As will be shown later, the effective viscous flux $G$ defined as
\begin{equation*}
  G:=\mu\frac{v_y}{J}-\pi
\end{equation*}
plays a crucial role in proving the global existence of solutions to system (\ref{lcnsJ})--(\ref{lcnspi}). By straightforward calculations, it follows from (\ref{lcnsv}) and (\ref{lcnspi}) that
\begin{equation}\label{eqG}
  G_t-\frac{\mu}{J}\left(\frac{G_y}{\varrho_0}\right)_y=-\gamma\frac{v_y}{J}G.
\end{equation}

We will consider the Cauchy problem and, thus, complement system (\ref{lcnsJ})--(\ref{lcnspi}) with the initial condition
\begin{equation}
  \label{IC}
  (J,v,\pi)|_{t=0}=(J_0,v_0,\pi_0),
\end{equation}
where $J_0$ has uniform positive lower and upper bounds.

It should
be pointed out that, by the definition of $J$, the initial $J_0$ should
be identically one; however, for the aim of extending a local
solution $(J,v,\pi)$ to be a global one, we need the local existence
of solutions to system (\ref{lcnsJ})--(\ref{lcnspi}), with initial $J_0$
not being identically one. Therefore, in this paper, when studying the
local solutions, the initial $J_0$ is allowed to be not identically one,
but when studying the global solutions, we always assume that $J_0$ is
identically one.

Local and global strong solution to system (\ref{lcnsJ})--(\ref{lcnspi}), subject to (\ref{IC}), are defined in
the following two definitions.

\begin{definition}
\label{defloc}
Given a positive time $T\in(0,\infty)$.
A triple $(J,v,\pi)$ is called a strong solution to system (\ref{lcnsJ})--(\ref{lcnspi}), subject to (\ref{IC}), on $\mathbb R\times(0,T)$, if
it has the properties
\begin{eqnarray*}
&\inf_{y\in\mathbb R,t\in(0,T)}J(y,t)>0,\quad\pi\geq0\mbox{ on }\mathbb R\times(0,T),
\\
&J-J_0\in C([0,T];L^2),\quad\frac{J_y}{\sqrt{\varrho_0}}\in L^\infty(0,T; L^2),\quad J_t\in L^\infty(0,T; L^2),\\
&
\sqrt{\varrho_0}v\in C([0,T]; L^2),\quad v_y\in L^\infty(0,T; L^2),\quad\left(\sqrt{\varrho_0}v_t,\frac{v_{yy}}{\sqrt{\varrho_0}}\right)\in L^2(0,T; L^2),\\
& \pi\in C([0,T]; L^2),\quad \frac{\pi_y}{\sqrt{\varrho_0}}\in L^\infty(0,T; L^2),\quad\pi_t\in L^4(0,T; L^2),
\end{eqnarray*}
satisfies equations (\ref{lcnsJ})--(\ref{lcnspi}), a.e.\,in $\mathbb R\times(0,T)$, and fulfills the initial condition (\ref{IC}).
\end{definition}

\begin{definition}
\label{defglo}
A triple $(J,v,\pi)$ is called a global strong solution to system (\ref{lcnsJ})--(\ref{lcnspi}), subject to (\ref{IC}), if it is a
strong solution to the same system on $\mathbb R\times(0,T)$, for any
positive time $T\in(0,\infty)$.
\end{definition}

The main results of this paper are the following two theorems cocerning the local and global existence of strong solutions to system (\ref{lcnsJ})--(\ref{lcnspi}), subject to (\ref{IC}).

\begin{theorem}[\textbf{Local well-posedness}]
\label{LOCAL}
Assume that
\begin{gather}
  \inf_{y\in(-R,R)}\varrho_0(y)>0,~~\forall R\in(0,\infty),\quad \varrho_0\leq\bar\varrho\mbox{ on }\mathbb R,
  \tag{H1}\\
  \left(\sqrt{\varrho_0}v_0, v_0',\pi_0,\frac{\pi_0'}{\sqrt{\varrho_0}}\right)\in L^2,\quad\pi_0\geq0\mbox{ on }\mathbb R,\tag{H2}\\
  0<\underline J\leq J_0\leq\bar J<\infty\mbox{ on }R,\quad \frac{J_0'}{\sqrt{\varrho_0}}\in L^2,\nonumber
\end{gather}
for positive constants $\bar\varrho, \underline J,$ and $\bar J$,
and denote $G_0:=\mu v_0'-\pi_0.$

The following two hold:

(i) There is a positive time $T$ depending only on $\gamma, \mu, \bar\varrho, \underline J, \bar J, \|v_0'\|_2, \|\pi_0\|_2$, and $\|\pi_0\|_\infty$, such that system (\ref{lcnsJ})--(\ref{lcnspi}), subject to the initial condition (\ref{IC}),
has a unique strong solution $(J,v,\pi)$, on $\mathbb R\times(0,T)$.

(ii) Assume in addition that
\begin{equation*}
\left|\left(\frac{1}{\sqrt{\varrho_0}}\right)'(y)\right|\leq \frac{K_0}{2},\quad \forall y\in \mathbb R, \quad\varrho_0^{-\frac\delta2}G_0\in L^2,\tag{H3}
\end{equation*}
for two positive constants $\delta$ and $K_0$.

Then, $(J,v,\pi)$ has the additional regularities
\begin{eqnarray}
&\varrho_0^{-\frac\delta2}G\in L^\infty(0,T; L^2)\cap L^4(0,T;L^\infty),\quad
\varrho_0^{-\frac{\delta+1}{2}}G_y\in L^2(0,T; L^2),\label{ADR1}
\end{eqnarray}
where $G:=\mu\frac{v_y}{J}-\pi$ is the effective viscous flux,
and
\begin{equation}\label{ADR2}
  \left\{
  \begin{aligned}
    &v\in L^\infty(0,T; H^1),&&\mbox{if }v_0\in H^1
    \mbox{ and }\delta\geq1,\\
    &\vartheta \in L^\infty(0,T; H^1),&&\mbox{if }\vartheta_0\in H^1, \frac{J_0'}{\varrho_0}\in L^2,\,\mbox{and }\delta\geq1,\\
    &s\in L^\infty(0,T; L^\infty),&&\mbox{if }s_0\in L^\infty\mbox{ and }\delta\geq\gamma,
  \end{aligned}
  \right.
\end{equation}
where $\vartheta:=\frac{\pi}{R\varrho}$ and $s:=c_v\log\left(\frac{\pi}{A\varrho^\gamma}\right)$, respectively, are the corresponding temperature and entropy,
with $\varrho:=\frac{\varrho_0}{J}$ being the density, which satisfies
equation (\ref{LCNSvarrho}), and $\vartheta_0:=\frac{\pi_0}{R\varrho_0}$
and $s_0:=c_v\log\left(\frac{\pi_0}{A\varrho_0^\gamma}\right)$, respectively
are the initial temperature and entropy.
\end{theorem}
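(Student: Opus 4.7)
The plan is to split the argument into two stages matching the two parts of the theorem. For part (i), the basic well-posedness, we use a vanishing-vacuum approximation: replace $\varrho_0$ by $\varrho_0^\epsilon := \varrho_0 + \epsilon$ so that the density is uniformly positive, solve the resulting non-vacuum Cauchy problem via the classical Lagrangian theory to obtain $(J^\epsilon,v^\epsilon,\pi^\epsilon)$, and pass to the limit $\epsilon\to 0^+$. The whole purpose of Section \ref{seclocnov} is to produce a priori estimates on such non-vacuum solutions that are \emph{independent of the positive lower bound of the density}. These estimates control $J^\epsilon, (J^\epsilon)^{-1}, \pi^\epsilon$ in $L^\infty$, control $\sqrt{\varrho_0}v^\epsilon, v_y^\epsilon, J_y^\epsilon/\sqrt{\varrho_0}, \pi_y^\epsilon/\sqrt{\varrho_0}$ in $L^\infty_tL^2_y$, and control $\sqrt{\varrho_0}v_t^\epsilon$ and $v_{yy}^\epsilon/\sqrt{\varrho_0}$ in $L^2_tL^2_y$, on a time interval $T$ depending only on the listed quantities. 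Weak-$\ast$ compactness together with Aubin--Lions compactness on bounded spatial subsets passes these bounds to the limit, yielding a strong solution in the sense of Definition \ref{defloc}; uniqueness is obtained from an energy estimate on the difference of two solutions, weighted by $\sqrt{\varrho_0}$.

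For part (ii), the key is the weighted estimate (\ref{ADR1}) for the effective viscous flux $G$. We multiply equation (\ref{eqG}) by $\varrho_0^{-\delta}G$ and integrate by parts over $\mathbb R$ to obtain
\begin{equation*}
\frac{1}{2}\frac{d}{dt}\int\varrho_0^{-\delta}G^2\,dy + \mu\int\frac{\varrho_0^{-\delta-1}}{J}G_y^2\,dy = -\mu\int\frac{(\varrho_0^{-\delta})_y}{J\varrho_0}GG_y\,dy + \mu\int\frac{J_y\varrho_0^{-\delta}}{J^2\varrho_0}GG_y\,dy - \gamma\int\frac{v_y}{J}\varrho_0^{-\delta}G^2\,dy.
\end{equation*}
The weight hypothesis in (H3) is equivalent to $|\varrho_0'|\leq K_0\varrho_0^{3/2}$, hence $|(\varrho_0^{-\delta})_y|\leq \delta K_0\varrho_0^{-\delta+1/2}$; together with the $L^\infty_tL^2_y$ bound on $J_y/\sqrt{\varrho_0}$ from part (i), both mixed terms on the right are absorbed via Cauchy--Schwarz into the dissipative integral $\int \varrho_0^{-\delta-1}J^{-1}G_y^2$. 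The last term is closed by Gronwall once $v_y/J$ is placed in $L^2(0,T;L^\infty)$, which follows from the regularities of $v_y$ in part (i) by Sobolev embedding. This produces $\varrho_0^{-\delta/2}G\in L^\infty_tL^2_y$ together with $\varrho_0^{-(\delta+1)/2}G_y\in L^2_tL^2_y$, and the interpolation $\|f\|_\infty^2\leq 2\|f\|_2\|f_y\|_2$ applied to $f = \varrho_0^{-\delta/2}G$ then supplies the $L^4_tL^\infty_y$ piece.

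The consequences in (\ref{ADR2}) follow from (\ref{ADR1}) combined with the identity $\varrho_0 v_t = G_y$ read off from (\ref{lcnsv}). When $\delta\geq 1$, writing $\varrho_0^{-1}=\varrho_0^{-(\delta+1)/2}\cdot\varrho_0^{(\delta-1)/2}$ and using (H1) shows $v_t\in L^2(0,T;L^2)$, and hence $v\in L^\infty(0,T;H^1)$ once $v_0\in H^1$. For $\vartheta = \pi J/(R\varrho_0)$, differentiating in $y$ and using $J_0'/\varrho_0\in L^2$ together with (H3) to handle the $\varrho_0'/\varrho_0^2$ factor yields the $H^1$ bound. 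For $s\in L^\infty$, writing $s = c_v[\log\pi + \gamma\log J - \gamma\log\varrho_0 - \log A]$ reduces the task to a uniform two-sided bound on $q := \pi/\varrho_0^\gamma$; equation (\ref{lcnspi}) implies $q_t + \gamma(v_y/J)q = (\gamma-1)\mu(v_y/J)^2/\varrho_0^\gamma$, and expanding $v_y/J = (G+\pi)/\mu$ bounds the source by $C\|\varrho_0^{-\gamma/2}G\|_\infty^2 + C\pi q$, each integrable in $t$ when $\delta\geq\gamma$, so Gronwall closes. The main obstacle throughout is the uniform handling of weights: every integration by parts on the $G$-equation generates a $\varrho_0'$ whose only tame expression is through (H3), and the whole scheme rests on the precise matching between the weight $\varrho_0^{-\delta}$ and the Muckenhoupt-type hypothesis on $(\varrho_0^{-1/2})'$.
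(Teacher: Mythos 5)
Your stage (i)---regularizing $\varrho_0$ to $\varrho_0+\epsilon$, deriving $\epsilon$-uniform a priori bounds, passing to the limit via weak-$\ast$ compactness and Aubin--Lions, and proving uniqueness by an energy argument on the difference---is exactly the paper's strategy (Corollary \ref{corTextEst}, Proposition \ref{propuni}, and the limit argument in Section \ref{seclocv}).

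The gap is in stage (ii), specifically in the energy identity you write for $\varrho_0^{-\delta/2}G$. You multiply (\ref{eqG}) by $\varrho_0^{-\delta}G$; since the parabolic term in (\ref{eqG}) carries a factor $\frac1J$, the integration by parts generates the commutator integral $\mu\int \varrho_0^{-\delta-1}J^{-2}\,G\,J_y\,G_y\,dy$. You assert that this term, together with the $(\varrho_0^{-\delta})_y$ term, is ``absorbed via Cauchy--Schwarz into the dissipative integral''. That is not true for the $J_y$ term: a Cauchy--Schwarz split leaves you with $\bigl(\int \varrho_0^{-\delta-1}J^{-3}J_y^2G^2\,dy\bigr)^{1/2}$, and the only available control on $J_y$ is $J_y/\sqrt{\varrho_0}\in L^\infty_tL^2_y$, which forces a pointwise bound on $\varrho_0^{-\delta/2}G$ that you do not yet have at this stage of the argument (and is itself part of what you are proving). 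This can be repaired by a Gagliardo--Nirenberg interpolation for $\|\varrho_0^{-\delta/2}G\|_\infty$ combined with Young's inequality, but that extra step is essential and absent from the proposal. The paper sidesteps the issue entirely with a cleaner multiplier: it tests (\ref{eqG}) with $J\,G/\varrho_0^\delta$ rather than $G/\varrho_0^\delta$ (see (\ref{G'.1})--(\ref{G'.3}) in Proposition \ref{propGex}). The extra factor of $J$ exactly cancels the $1/J$ in $\frac{\mu}{J}(\frac{G_y}{\varrho_0})_y$, so the dissipation becomes $\mu\int \frac{G_y}{\varrho_0}\bigl(\frac{G}{\varrho_0^\delta}\bigr)_y\,dy$ and the only commutator produced is the $(\varrho_0^{-\delta})_y$ one, which you already handle correctly via $|\varrho_0'|\leq K_0\varrho_0^{3/2}$. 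Your derivation of (\ref{ADR2}) from (\ref{ADR1}) is essentially the paper's (for the entropy bound the paper instead completes the square in (\ref{lcnspi}) to get $\partial_t(\pi/\varrho_0^\gamma)\leq \frac{\gamma^2}{4\mu}G^2/\varrho_0^\gamma$ for the upper bound and monotonicity of $J^\gamma\pi$ for the lower bound, a somewhat more economical route than your ODE-plus-Gronwall argument, but both are fine).
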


\begin{remark}
\label{remarkrhocondition}
Basically, the condition $\big|\big(\tfrac{1}{\sqrt{\varrho_0}}\big)'\big|\leq \frac{K_0}{2}$ or equivalently $|\varrho_0'|\leq K_0\varrho_0^{\frac32}$ on
$\mathbb R$ means that $\varrho_0$ decays no faster than $\frac{K}{y^2}$
at the far field: if choosing
$$
\varrho_0(y)=\frac{K_\varrho}{\langle y\rangle^{\ell_\varrho}},\quad0<K_\varrho<\infty, 0\leq\ell_\varrho<\infty,\quad\mbox{where }\langle y\rangle =(1+y^2)^{\frac12},
$$
then
$$
\left|\left(\frac{1}{\sqrt{\varrho_0}}\right)'\right|\leq \frac{K_0}{2}\mbox{ on }\mathbb R\quad \Longleftrightarrow \quad\ell_\varrho\leq2.
$$
\end{remark}

\begin{remark}
\label{remarkinitialdataloc}
Choose
$$
\varrho_0(y)=\frac{K_\varrho}{\langle y\rangle^{\ell_\varrho}},\quad J_0\equiv1,\quad v_0\in C_c^\infty,\quad
\pi_0=Ae^{\frac{1}{c_v}}\varrho_0^\gamma,
$$
where $K_\varrho$ and $\ell_\varrho$ are positive numbers.

(i) If $\frac{1}{2\gamma-1}<\ell_\varrho\leq2$, then
$(\varrho_0,v_0,\pi_0)$ satisfies conditions (H1), (H2), and (H3), with
$\delta=1$. Therefore, by Theorem \ref{LOCAL}, there
is a unique local strong solution $(J,v,\pi)$, with $v$ being in
the inhomogeneous Sobolev space $L^\infty(0,T; H^1)$; if moreover that
$\gamma>\frac54$ and $\frac{1}{2(\gamma-1)}<\ell_\varrho\leq2$, then
$\vartheta_0\in H^1$, and, consequently, the temperature $\vartheta$
also lies in the inhomogeneous Sobolev space $L^\infty(0,T; H^1)$.
Note that this does not contradict to the ill-posedness results for
the compressible Navier-Stokes equations in \cite{LWX}, as the initial
density there was assumed to be compactly supported.

(ii) If $\frac1\gamma<\ell_\varrho\leq2$, then $(\varrho_0,v_0,\pi_0)$
satisfies conditions (H1), (H2), (H3), with $\delta=\gamma$, and
$s_0\equiv1$. Therefore, by Theorem \ref{LOCAL}, there
is a unique local strong solution $(J,v,\pi)$,
and the corresponding entropy $s$ is uniformly bounded on $\mathbb
R\times(0,T)$. To the best of our knowledge, this is the first
time that the boundedness of the entropy is achieved, in the presence of
vacuum at the far field, for the compressible Navier-Stokes equations.

(iii) Combining (i) with (ii), if $\gamma>\frac54$ and
$\max\left\{\frac1\gamma,\frac{1}{2(\gamma-1)}\right\}<\ell_\varrho\leq2$,
there is a unique local strong solution $(J,v,\pi)$, with the properties that
the corresponding entropy is uniformly bounded, and the velocity and the
corresponding temperature lie in the inhomogeneous space $L^\infty(0,T;
H^1)$.
\end{remark}

\begin{remark}
  \label{remarkpropogation}
  (i) The assumption $\inf_{y\in(-R,R)}\varrho_0(y)>0,$ for all
  $R\in(0,\infty)$, is used for the boundedness of the entropy and
  the regularities of the velocity and temperature in the inhomogeneous
  Sobolev spaces, but it is not needed for the local well-posedness (in the
  homogeneous Sobolev spaces).

  (ii) The compressible Navier-Stokes equations propagate the regularities in the homogeneous Sobolev spaces, see \cite{CHOKIM04,CHOKIM06-1,CHOKIM06-2}, but not in the inhomogeneous Sobolev spaces (in particular, the $L^2$ regularity of $v$
  can not be propagated), see \cite{LWX}, if the initial density has a compact support.
  While (ii) of Theorem \ref{LOCAL} shows that, if the initial density
  decays slowly to the vacuum at the far field, then the regularities
  in the homogeneous Sobolev spaces, in particular, the $L^2$ regularity of $v$, can be also propagated by the compressible
  Navier-Stokes equations.

  (iii) The result in (ii) of Theorem \ref{LOCAL} also indicates that the
  uniform boundedness of the entropy can be propagated by the compressible
  Navier-Stokes equations, if the initial density decays slows to the vacuum
  at the far field.
\end{remark}

\begin{remark}
\label{remarkeulerlocal}
By the definition of strong solutions, we have the regularities
$$
\sqrt{\varrho_0}v\in L^\infty(0,T;L^2),\quad v_y\in L^2(0,T; H^1),
$$
which implies $v\in L^2(0,T; Lip)$. Define the Euler coordinate as
$$
x=\eta(y,t), \quad\partial_t\eta(y,t)=v(y,t),\quad\eta(y,0)=y.
$$
Noticing that
$$
\partial_t\eta_y=v_y=\partial_tJ,\quad\eta_y(y,0)=1=J(y,0),
$$
we have $\eta_y\equiv J$ on $\mathbb R\times(0,T)$. Recalling that $J$
has uniform positive lower and upper bounds on $\mathbb R\times(0,T)$,
for any fixed $t\in(0,T)$, $\eta$ is reversible in $y$. Therefore, one
can define the density $\rho$, velocity $u$, and pressure $p$, in the
Euler coordinate as
$$
\rho(x,t)=\varrho(y,t),\quad u(x,t)=v(y,t),\quad p(x,t)=\pi(y,t),
$$
where $\varrho(y,t):=\frac{\varrho_0(y)}{J(y,t)}$. We can check that
$(\rho, u, p)$ has appropriate regularities, in particular $u\in
L^1(0,T; Lip)$, and it is a solution to system (\ref{ECNSrho}),
(\ref{ECNSu}), and (\ref{ECNSp}), subject to the initial data
$(\varrho_0, v_0, \pi_0)$; while the uniqueness in the Euler
coordinate can be proven by transforming it to the Lagrangian
coordinate, as $u\in
L^1(0,T; Lip)$, and apply the uniqueness result stated in Proposition \ref{propuni}.
\end{remark}

\begin{theorem}[\textbf{Global well-posedness}]
\label{GLOBAL}
Assume that (H1)--(H2) hold, and that
\begin{equation*}\tag{H4}
\varrho_0\in L^1,\quad \pi_0\in L^1,\quad\varrho_0(y)\geq\frac{A_0}{(1+|y|)^2},\quad\forall y\in \mathbb R,
\end{equation*}
for some positive constant $A_0$.

The following two hold:

(i) There is a unique global strong solution $(J, v, \pi)$ to system (\ref{lcnsJ})--(\ref{lcnspi}), subject to the initial condition
$(J,v,\pi)|_{t=0}=(1,v_0,\pi_0).$
Moreover, we have the following
\begin{eqnarray*}
&\displaystyle\int_{\mathbb R}\left(\frac12{\varrho_0(y)v^2(y,t)} +\frac{1}{\gamma-1}J(y,t)\pi(y,t)\right) dy=\mathcal E_0,\\
&\displaystyle\int_\mathbb R\varrho_0(y)v(y,t)dy=m_0,\quad\inf_{y\in\mathbb R}J(y,t)\geq c_0,
\end{eqnarray*}
for any $t\in[0,\infty)$,
where
$$
\mathcal E_0:=\int_{\mathbb R}\left(\frac{\varrho_0v_0^2}{2}+\frac{\pi_0}{\gamma-1}\right) dy,
\quad m_0=\int_{\mathbb R}\varrho_0v_0dy,
\quad c_0=e^{-\frac{2\sqrt{2}}{\mu}\sqrt{\mathcal E_0}\|\varrho_0\|_1}.
$$

(ii) Assume further that (H3) holds,
for two positive constants $\delta$ and $K_0$. Then, (\ref{ADR1}) and (\ref{ADR2}) hold for any $T\in(0,\infty)$.
\end{theorem}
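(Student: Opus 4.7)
The plan is to promote the local solution from Theorem \ref{LOCAL} to a global one via a continuation argument driven by a priori estimates depending only on $T$ and the initial data. I would first verify the two conservation laws by testing (\ref{lcnsv}) against $v$ and combining with (\ref{lcnspi}) weighted by $J/(\gamma-1)$; using $J_t = v_y$, the dissipation $\int \mu v_y^2/J\,dy$ cancels exactly against a contribution coming from (\ref{lcnspi}) after integration by parts, which yields $\frac{d}{dt}\int[\frac12\varrho_0 v^2 + J\pi/(\gamma-1)]\,dy = 0$. Integration of (\ref{lcnsv}) over $\mathbb{R}$, together with the decay of $G$ at infinity, yields conservation of momentum.

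The principal obstacle is the uniform lower bound $J \geq c_0$: every higher-order estimate relies on $\sup_y \varrho = \sup_y(\varrho_0/J)$ remaining finite. I would exploit the structural identity $\mu\,\partial_t \log J = G + \pi$, where $G = \mu v_y/J - \pi$ is the effective viscous flux. Integrating (\ref{lcnsv}) from $-\infty$ to $y$ gives $G(y,t) - G(-\infty,t) = \partial_t \int_{-\infty}^y \varrho_0 v\,dy'$; integrating in time then produces
\begin{equation*}
\mu \log J(y,t) = \int_0^t G(-\infty,s)\,ds + \int_0^t \pi(y,s)\,ds + \int_{-\infty}^y \varrho_0(y')\bigl[v(y',t) - v_0(y')\bigr]\,dy'.
\end{equation*}
Since $\pi \geq 0$, the middle term only helps for a lower bound, while the cumulative mass integral is controlled by Cauchy-Schwarz as $\|\varrho_0\|_1^{1/2}\|\sqrt{\varrho_0}v\|_2 \leq \sqrt{2\mathcal E_0}\,\|\varrho_0\|_1^{1/2}$, bounded via the energy identity. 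The delicate point is handling $G(-\infty,s)$: momentum conservation forces $G(+\infty,t) = G(-\infty,t)$, and $\varrho_0 \in L^1$ combined with the far-field decay $\varrho_0 \geq A_0/(1+|y|)^2$ from (H4), together with the $L^2$-type bounds on $\pi$ and $v_y/J$ coming from the energy, rule out a nonzero asymptotic limit for $G$, forcing this common value to vanish. This yields the explicit exponential lower bound on $J$.

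With $J \geq c_0$ in hand, equation (\ref{lcnsv}) becomes effectively uniformly parabolic in $v$, and the higher-order estimates on $v_y$, $\pi$, $\sqrt{\varrho_0}v_t$, and $\pi_y/\sqrt{\varrho_0}$ established in the local theory can be reproduced with constants depending only on $T$, $\mathcal E_0$, $c_0$, and the initial norms. A standard open-closed continuation applied to the set $\{T : \text{a strong solution exists on } [0,T]\}$ then extends the local solution globally, giving part (i).

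For part (ii), I would propagate (\ref{ADR1}) by testing the parabolic equation (\ref{eqG}) for $G$ against $\varrho_0^{-\delta}G$: the viscous term yields $\int \varrho_0^{-\delta-1}(G_y)^2\,dy$, while the commutator between the spatial operator and the weight $\varrho_0^{-\delta}$ is absorbable using the pointwise bound $|(\varrho_0^{-1/2})'| \leq K_0/2$ from (H3), Cauchy-Schwarz, and Gr\"onwall. The $L^4(0,T;L^\infty)$ part of (\ref{ADR1}) then follows from the 1D Sobolev embedding and interpolation. Finally, (\ref{ADR2}) for $v$, $\vartheta$, and $s$ follows from the weighted controls on $G$, the identity $\mu v_y/J = G + \pi$, and the transport equation for $s$, exactly as in Theorem \ref{LOCAL}, now valid on any finite interval $[0,T]$.
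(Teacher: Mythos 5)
Your overall strategy mirrors the paper's: establish the energy and momentum conservation laws, obtain a uniform positive lower bound on $J$ (this is the crux, since it bounds $\varrho=\varrho_0/J$ from above), port the local weighted estimates on $G,\pi,v_y,J_y$ to finite time intervals with constants depending only on $T$ and data, and close with a continuation argument; the same structure, with the weighted $G$ estimate via (H3) and Gr\"onwall, gives (\ref{ADR1})--(\ref{ADR2}) on every $[0,T]$.

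The one genuine variation is your proof of $\inf_y J\geq c_0$. The paper (Proposition \ref{lbdJ}) integrates $\varrho_0 v-\mu(\log J)_y+\int_0^t\pi_y\,d\tau=\varrho_0 v_0$ over a finite interval $(z_0,z)$ and then sends $z_0\to+\infty$, exploiting the decays $J(z_0,t)\to J_0(z_0)=1$ (since $\int_0^t v_y\,d\tau\in H^1$) and $\int_0^t\pi(z_0,\tau)\,d\tau\to0$ (via the solution formula for (\ref{lcnspi}) and $\pi_0\in H^1$). You instead integrate directly from $-\infty$ to $y$ and must control the time integral of $G(-\infty,s)$; this is legitimate because for a strong solution $G\in L^\infty(0,T;L^2)$ (since $v_y,\pi\in L^\infty(0,T;L^2)$ and $J$ is bounded away from $0$ and $\infty$ on $[0,T]$) and $G_y\in L^1$ (since $G_y/\sqrt{\varrho_0}\in L^2$ and $\sqrt{\varrho_0}\in L^2$), hence $G(\pm\infty,t)$ exists and equals $0$ directly. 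Note therefore that invoking momentum conservation to get $G(+\infty)=G(-\infty)$ is a detour: the vanishing of $G$ at $\pm\infty$ is what proves momentum conservation, not the other way around. Both routes deliver the same explicit $c_0$; your expression $\|\varrho_0\|_1^{1/2}$ for the Cauchy--Schwarz factor is in fact the correct power.

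One caveat worth flagging: your derivation of the energy identity by ``testing (\ref{lcnsv}) against $v$'' glosses over the justification of integration by parts on the whole line when $\varrho_0\to0$ at infinity. The paper's Propositions \ref{basic}, \ref{momentum}, \ref{estG}, \ref{estGextra} all use a cutoff $\eta_r$ designed so that $|\eta_r'|\leq M_0\sqrt{\varrho_0}$; this is precisely where the lower-bound condition $\varrho_0\geq A_0/(1+|y|)^2$ in (H4) is used, and it is the mechanism that makes the boundary terms $\int(\pi-\mu v_y/J)v\,\eta_r\eta_r'$ vanish as $r\to\infty$. Your proposal mentions (H4) only in the context of the $J$ bound, but the same device is needed to make the energy, momentum, and $G$-estimate arguments rigorous; it should be made explicit there as well.
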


\begin{remark}
If the initial data has more regularities, then the corresponding
solution $(\varrho, v, \pi)$ in Theorem \ref{GLOBAL} can be classical
ones and, consequently, we obtain the global existence of classical
solutions to the compressible Navier-Stokes equations without heat
conduction, in the presence of far field vacuum. To the best of our
acknowledge, this is the first result on the global existence of
strong solutions to the compressible Navier-Stokes equations without
heat conduction, for arbitrary large initial data, in the presence
of far field vacuum.
Note that this
global existence result does not contradict to the finite time blow-up
results in \cite{XINYAN13}, as the assumption that having
initial isolated mass group there is excluded in our case.
\end{remark}

\begin{remark}
\label{remarkrholowerbound}
The following assumption in (H4)
$$
\varrho_0(y)\geq\frac{A_0}{(1+|y|)^2},\quad\forall y\in \mathbb R
$$
can be removed. In fact, noticing that, essentially,
the role that this assumption
played in the proof of Theorem \ref{GLOBAL} is to justify some
integration by parts of some integrals defined
on the whole line, so that
one can get the basic energy inequality and the estimates on $G$,
see Proposition \ref{basic}, Proposition \ref{estG}, and Proposition
\ref{estGextra}.
Alternatively, to get the desired basic energy inequality and
the estimates on $G$, one can approximate
the Cauchy problem by a sequence of initial-boundary value problems,
while for the initial-boundary value problems, the integration by
parts to the corresponding integrals, defined on the finite intervals,
holds without the above assumption.
\end{remark}

\begin{remark}
\label{remarkinitialdataglobal}
Choose
$$
\varrho_0(y)=\frac{K_\varrho}{\langle y\rangle^{\ell_\varrho}},\quad J_0\equiv1,\quad v_0\in C_c^\infty,\quad
\pi_0=Ae^{\frac{1}{c_v}}\varrho_0^\gamma,
$$
where $K_\varrho$ and $\ell_\varrho$ are positive numbers.

(i) If $1<\ell_\varrho\leq2$, then $(\varrho_0,v_0,\pi_0)$ satisfies
assumptions (H1), (H2), (H3), with $\delta=1$, and (H4). Therefore, by Theorem \ref{GLOBAL}, there
is a unique global strong solution $(J,v,\pi)$, with $v$ being in the inhomogeneous Sobolev space $L^\infty(0,T; H^1)$; if moreover that $\gamma>\frac54$ and $\max\left\{1,\frac{1}{2(\gamma-1)}\right\} <\ell_\varrho\leq2$, then $\vartheta_0\in H^1$, and, consequently, the temperature $\vartheta$
also lies in the inhomogeneous Sobolev space $L^\infty(0,T; H^1)$.

(ii) If $\max\left\{1,\frac1\gamma\right\}<\ell_\varrho\leq2$, then $(\varrho_0,v_0,\pi_0)$ satisfies conditions (H1), (H2), (H3), with $\delta=\gamma$, (H4), and $s_0\equiv1$. Therefore, by Theorem \ref{GLOBAL}, there
is a unique strong solution $(J,v,\pi)$, and the corresponding entropy $s$ is uniformly bounded on $\mathbb R\times(0,T)$.

(iii) Combining (i) with (ii), if $\gamma>\frac54$ and $\max\left\{1,\frac1\gamma,\frac{1}{2(\gamma-1)}\right\}<\ell_\varrho\leq2$, then there is a unique global strong solution $(J,v,\pi)$, with the properties that the corresponding entropy is uniformly bounded, and the velocity and the corresponding temperature lie in the inhomogeneous space $L^\infty(0,T; H^1)$.
\end{remark}

\begin{remark}
\label{remarkeulerglobal}
Same as in Remark \ref{remarkeulerlocal}, one can obtain the corresponding global existence of solutions in the Euler coordinates
to the compressible Navier-Stokes equations (\ref{ECNSrho}), (\ref{ECNSu}), and (\ref{ECNSp}), subject to the initial data $(\varrho_0, v_0, \pi_0)$.
\end{remark}

%
%

\section{Local existence in the absence of vacuum}
\label{seclocnov}
In this section, we study system (\ref{lcnsJ})--(\ref{lcnspi}),
subject to (\ref{IC}), in the absence of vacuum, that is, the density $\varrho_0$ is assumed to have a positive lower bound. We focus
on those a priori estimates of the solutions which
are independent of the positive
lower bound of the density $\varrho_0$.

Then the following local existence result holds:

\begin{proposition}\label{prop3.1}
Given a function $\varrho_0$ satisfying
$\underline\varrho\leq\varrho_0\leq\bar\varrho$ on $\mathbb R$,
for two positive constants $\underline\varrho$ and $\bar\varrho$.
Assume that the initial data $(J_0, v_0, \pi_0)$ satisfies
\begin{eqnarray*}
&\underline J\leq J_0\leq\bar J\mbox{ on }\mathbb R, \quad J_0'\in L^2, \quad v_0\in H^1,\quad0\leq\pi_0\in H^1,
\end{eqnarray*}
for two positive constants $\underline J$ and $\bar J$.

Then, there is a unique local strong
solution $(J, v, \pi)$ to system (\ref{lcnsJ})--(\ref{lcnspi}), subject
to the initial condition (\ref{IC}), on $\mathbb R\times(0,T)$,
satisfying
\begin{eqnarray*}
&&\frac{3}{4}\underline J\leq J\leq\frac54\bar J,\quad\pi\geq0,\mbox{ on }\mathbb R\times[0,T],\\
&&J-J_0\in C([0,T]; H^1),\quad J_t\in L^\infty(0,T; L^2),\\
&&v\in C([0,T]; H^1)\cap L^2(0,T; H^2),\quad v_t\in L^2(0,T; L^2),\\
&&\pi\in C([0,T]; H^1),\quad\pi_t\in L^2(0,T; L^2),
\end{eqnarray*}
where $T=T(\mu, \gamma, \underline\varrho, \bar\varrho, \ell_0)$, with
$\ell_0=\frac{1}{\underline J}+\bar J+\|J_0'\|_2+\|(v_0,\pi_0)\|_{H^1}$,
and the existence time
$T$ viewing as a function of $\ell_0$ is continuous
in $\ell_0\in(0,\infty)$.
\end{proposition}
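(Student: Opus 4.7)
The proof proceeds by a Picard-type linearization and a contraction-mapping argument on a short time interval. Starting from the constant iterate $(J^{(0)},v^{(0)},\pi^{(0)})\equiv(J_0,v_0,\pi_0)$, I construct $(J^{(n+1)},v^{(n+1)},\pi^{(n+1)})$ inductively in three stages: \emph{(i)} set
\begin{equation*}
J^{(n+1)}(y,t):=J_0(y)+\int_0^t v^{(n)}_y(y,s)\,ds,
\end{equation*}
so that $J^{(n+1)}_t=v^{(n)}_y$; \emph{(ii)} solve for $v^{(n+1)}$ the linear, uniformly parabolic problem
\begin{equation*}
\varrho_0\, v^{(n+1)}_t-\mu\left(\frac{v^{(n+1)}_y}{J^{(n+1)}}\right)_y=-\pi^{(n)}_y,\qquad v^{(n+1)}|_{t=0}=v_0,
\end{equation*}
using classical $L^2$-based parabolic theory (ellipticity is guaranteed by $\varrho_0\geq\underline\varrho>0$ and by the uniform bounds on $J^{(n+1)}$ enforced in the a priori estimate below); \emph{(iii)} recover $\pi^{(n+1)}$ by integrating the linear ODE
\begin{equation*}
\pi^{(n+1)}_t+\gamma\,\frac{v^{(n+1)}_y}{J^{(n+1)}}\,\pi^{(n+1)}=(\gamma-1)\mu\left(\frac{v^{(n+1)}_y}{J^{(n+1)}}\right)^{2},\qquad\pi^{(n+1)}|_{t=0}=\pi_0,
\end{equation*}
pointwise in the Lagrangian label $y$. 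The integrating-factor representation immediately yields $\pi^{(n+1)}\geq0$ whenever $\pi_0\geq0$.

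Next, I would derive uniform $n$-independent bounds on a time interval $[0,T]$, with $T$ depending on $\ell_0$ and the other constants. The bounds $\tfrac34\underline J\leq J^{(n+1)}\leq\tfrac54\bar J$ follow from $|J^{(n+1)}-J_0|\leq T^{1/2}\|v^{(n)}_y\|_{L^2_tL^\infty_y}$ combined with $H^1(\mathbb R)\hookrightarrow L^\infty(\mathbb R)$ and the velocity bounds. For the velocity I would test the linear momentum equation against $v^{(n+1)}$ and against $v^{(n+1)}_t$, which after absorbing the right-hand side via Young's inequality and the $\pi^{(n)}$-bounds produces
\begin{equation*}
\|\sqrt{\varrho_0}\,v^{(n+1)}\|_{L^\infty_tL^2}^2+\|v^{(n+1)}_y\|_{L^\infty_tL^2}^2+\|\sqrt{\varrho_0}\,v^{(n+1)}_t\|_{L^2_tL^2}^2\leq C(\ell_0),
\end{equation*}
and elliptic regularity applied to the momentum equation then gives $v^{(n+1)}_{yy}\in L^2_tL^2$ (using $\varrho_0\geq\underline\varrho$). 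The $H^1$-bound on $\pi^{(n+1)}$ comes from differentiating the ODE representation, while $J^{(n+1)}_y$ is handled by differentiating $J^{(n+1)}$ in $y$. A standard induction–continuity argument then shows that, for a fixed ambient ball chosen in terms of $\ell_0$ and after sufficient shrinking of $T$, the iteration remains in this ball; continuity of the existence time in $\ell_0$ is visible from the explicit dependence of the bounds on $\ell_0$.

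Finally, convergence is obtained by estimating the consecutive differences $(\delta J^n,\delta v^n,\delta\pi^n):=(J^{(n+1)}-J^{(n)},v^{(n+1)}-v^{(n)},\pi^{(n+1)}-\pi^{(n)})$ in a weaker norm. Subtracting the equations at two successive levels and using the uniform bounds, one derives an inequality of the form
\begin{equation*}
\sup_{0\leq s\leq T}\|(\delta J^n,\sqrt{\varrho_0}\,\delta v^n,\delta\pi^n)(s)\|_2^2+\int_0^T\|\delta v^n_y\|_2^2\,ds\leq C\,T^{\alpha}\sup_{0\leq s\leq T}\|(\delta J^{n-1},\sqrt{\varrho_0}\,\delta v^{n-1},\delta\pi^{n-1})(s)\|_2^2,
\end{equation*}
for some $\alpha>0$, so that further shrinking $T$ produces a contraction. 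Passing to the limit and using lower semicontinuity of norms yields a strong solution in the stated class; uniqueness comes from the same estimate applied to two solutions. I expect the main obstacle to be closing the higher-order estimates, because the pressure equation carries the quadratic source $(v_y/J)^2$ that forces control of $v_y$ in $L^2_tL^\infty_y$ (hence of $v$ in $L^2_tH^2$), while the momentum equation requires $\pi_y\in L^2_tL^2_y$ (hence $H^1$-control of $\pi$); closing this circular dependence is exactly where the smallness of $T$ enters, and the $\ell_0$-dependence of all constants must be tracked explicitly to guarantee the continuity of $T$ as a function of $\ell_0$.
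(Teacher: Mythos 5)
Your proposal follows essentially the same approach as the paper: a Picard/contraction-mapping argument in which $J$ and $\pi$ are solved as ODEs along the Lagrangian label and $v$ is solved from a linear uniformly parabolic equation, with the fixed point obtained by contraction in a suitable norm on a short time interval $[0,T]$ with $T=T(\ell_0)$. The only cosmetic difference is that the paper takes the velocity alone as the fixed-point variable (computing both $J$ and $\pi$ from the given iterate $v$ before solving for the new $V$), rather than iterating the full triple with a Gauss--Seidel-type update; the underlying linearization and energy estimates are the same.
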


\begin{proof}
Let $T$ be a small positive time to be determined by the quantity
$\ell_0$. Given a velocity $v\in L^\infty(0,T; H^1)\cap L^2(0,T;H^2)$, define $J$ and
$\pi$, successively, as the unique solutions to the following two
ordinary differential equations:
$$
J_t=v_y,
$$
and
$$
\pi_t+\gamma\frac{v_y}{J}\pi=(\gamma-1)\mu\left(\frac{v_y}{J}\right)^2,
$$
with initial data $J|_{t=0}=J_0$ and $\pi|_{t=0}=\pi_0$, respectively.
Then, define $V$ as the unique solution to the following uniform parabolic equation
$$
V_t-\frac{\mu}{\varrho_0}\left(\frac{V_y}{J}\right)_y =-\frac{\pi_y}{\varrho_0},
$$
subject to the initial data $V|_{t=0}=v_0$. Define a solution mapping
$\mathcal M: v\mapsto V$, with $V$ defined as above. By
standard energy estimates, and choosing $T=T(\mu, \gamma, \underline\varrho, \bar\varrho, \ell_0)$ small enough,
one can show that $\mathcal M$ is a contracting mapping on the
space $X=L^\infty(0,T;H^1)\cap L^2(0,T;H^2)$ and, thus, there is
a unique fixed point, denoted by $v$, to $\mathcal M$ in $X$. Then $(J,v,\pi)$, with $(J,\pi)$ defined in the way as stated above,
is a desired solution to system (\ref{lcnsJ})--(\ref{lcnspi}),
subject to (\ref{IC}), on $\mathbb R\times(0,T)$.
Since the proof is lengthy but standard, the details are omitted here.
\end{proof}

By Proposition \ref{prop3.1}, there is a positive time
$T_1$, such that system (\ref{lcnsJ})--(\ref{lcnspi}), subject to (\ref{IC}),
has a unique solution $(J, v, \pi)$, on the time interval $(0,T_1)$,
satisfying
\begin{equation*}
\left\{
\begin{array}{l}
\frac{3}{4}\underline J\leq J\leq\frac54\bar J,\quad\pi\geq0,\quad\mbox{ on }\mathbb R\times[0,T_1], \\
J-J_0\in C([0,T_1]; H^1),\quad J_t\in
L^\infty(0,T_1; L^2),\\
v\in C([0,T_1]; H^1)\cap L^2(0,T_1; H^2),\quad v_t\in L^2(0,T_1; L^2),\\
\pi\in C([0,T_1]; H^1),\quad\pi_t\in L^2(0,T_1; L^2),
\end{array}
\right.
\end{equation*}
where $T_1$ is a positive constant depending only on $\mu, \gamma,
\underline\varrho, \bar\varrho,$ and $\frac{1}{\underline J}+\bar J+\|J_0'\|_2+\|(v_0,\pi_0)\|_{H^1}$.
Starting from the time $T_1$, noticing that
$(J, v, \pi)|_{t=T_1}$ satisfies the
conditions on the initial data stated in Proposition \ref{prop3.1},
one can extend the solution $(J, v, \pi)$
forward in time to another time $T_2=T_1+t_1$, for some positive time
$$
t_1=t_1(\mu, \gamma, \underline\varrho, \bar\varrho,
\ell(T_1)),
$$
where, for simplicity of notations, we have denoted
\begin{equation}\label{ell}
\ell(t):=\left(\left(\inf_{y\in\mathbb R}J\right)^{-1}+\|J\|_\infty+\|J_y\|_2+\|v\|_{H^1}+\|\pi\|_{H^1} \right)(t),
\end{equation}
such that $(J, v, \pi)$ is the unique solution to system
(\ref{lcnsJ})--(\ref{lcnspi}), subject to (\ref{IC}), on the time internal
$(0,T_2)$, and that it enjoys the same regularities as above in the time
interval $(0,T_2)$, and $(\frac{3}{4})^2\underline J\leq J\leq (\frac54)^2\bar J$ on $\mathbb
R\times[0,T_2]$. Continuing this procedure, one obtains two sequences of
positive numbers $\{t_j\}_{j=1}^\infty$ and $\{T_j\}_{J=1}^\infty$, with
$$
t_j=t_j(\mu,\gamma,\underline\varrho,\bar\varrho,\ell(T_j)),
$$
and $T_{j+1}=T_j+t_j$, such that the solution $(J, v, \pi)$ can be extended
to time intervals $(0,T_j)$, satisfying
\begin{equation*}
\left\{
\begin{array}{l}
(\frac34)^j\underline J\leq J\leq(\frac54)^j\bar J,\quad \pi\geq0,\quad\mbox{ on }\mathbb R\times[0,T_j],\\
J-J_0\in C([0,T_j]; H^1),\quad J_t\in
L^\infty(0,T_j; L^2),\\
v\in C([0,T_j]; H^1)\cap L^2(0,T_j; H^2),\quad v_t\in L^2(0,T_j; L^2),\\
\pi\in C([0,T_j]; H^1),\quad\pi_t\in L^2(0,T_j; L^2),
\end{array}
\right.
\end{equation*}
for $j=1,2,\cdots.$ Set the maximal existing time $T_\infty$ as
$$
T_\infty=T_1+\sum_{j=1}^\infty t_j.
$$
Then, the solution $(J, v, \pi)$ can be extended to the time interval
$(0,T_\infty)$, such that
\begin{equation}\label{reg}
\left\{
\begin{array}{l}
\displaystyle 0<\inf_{y\in\mathbb R, t\in[0,T]}J(y,t)\leq\sup_{y\in\mathbb R,
t\in[0,T]}J(y,t)<\infty,\\
\pi\geq0\quad \mbox{on }\mathbb R\times[0,T],\\
J-J_0\in C([0,T]; H^1),\quad J_t\in L^\infty(0,T; L^2),\\
v\in C([0,T]; H^1)\cap L^2(0,T; H^2),\quad v_t\in L^2(0,T; L^2),\\
\pi\in C([0,T]; H^1),\quad\pi_t\in L^2(0,T; L^2),
\end{array}
\right.
\end{equation}
for any $T\in(0,T_\infty)$. Moreover, if $T_\infty<\infty$, it must have $\lim_{j\rightarrow\infty}t_j=0$ and, consequently, one has
\begin{equation}
  \varlimsup_{j\rightarrow\infty}\ell(T_j)=\infty, \label{crit}
\end{equation}
where $\ell(t)$ is defined by (\ref{ell}); otherwise, if (\ref{crit}) is not true, then $\ell(T_j)$ is uniformly bounded and, thus, by Proposition \ref{prop3.1}, $t_j=t_j(\mu,\gamma,\underline\varrho,\bar\varrho,\ell(T_j))$, $j=1,2,\cdots,$ have a uniform positive lower bound, contradicting to the fact that $\lim_{j\rightarrow\infty}t_j=0$.

Thanks to the statements in the above paragraph, in the rest of this section,
we always assume, without any further mention, that
$(J,v,\pi)$ is the unique solution to system (\ref{lcnsJ})--(\ref{lcnspi}), subject to (\ref{IC}), and that it
has been extended, in the same way as above, to the maximal existing
time interval $(0,T_\infty)$, where the maximal time $T_\infty$ is constructed in the same way as above.

To obtain the a priori estimates on $(J, v, \pi)$, we define a positive time
\begin{equation}\label{Ts}
T_s:=\sup\left\{T\in(0,T_\infty)~\bigg|~\frac{\underline J}{2}
\leq J\leq 2\bar{J} \mbox{ on }
\mathbb R\times[0,T]\right\}.
\end{equation}

We start with the following estimate on $G$:

\begin{proposition}
\label{propGphi}
There is a positive constant $t_*^1=t_*^1(\gamma,\mu,\bar\varrho,\underline J,\|\sqrt{J_0}G_0\|_2)$, such that 
\begin{eqnarray*}
&\displaystyle\sup_{0\leq t\leq T_*^1} \|\sqrt{J}G\|_2^2 +\mu\int_0^{T_*^1} \left\|
\frac{G_y} {\sqrt{\varrho_0}}\right\|_2^2 dt\leq 3(1+\|\sqrt{J_0}G_0\|_2^2),\\
&\displaystyle\int_0^{T_*^1}\|G\|_\infty^4dt\leq C(\mu,\bar\varrho,\underline J,\|\sqrt{J_0}G_0\|_2),
\end{eqnarray*}
where $G_0:=\mu v_0'-\pi_0$, $T_*^1:=\min\{1,t_*^1, T_s\}$, and $T_s$ is defined by (\ref{Ts}).
\end{proposition}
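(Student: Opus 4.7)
The natural plan is to use $JG$ as a test function in equation (\ref{eqG}), producing an energy identity for $\|\sqrt{J}G\|_2^2$ together with the dissipation $\mu\|G_y/\sqrt{\varrho_0}\|_2^2$, and then to close the estimate by a Gronwall-type argument on a short time interval determined by the initial size of $G$ and the $J$-bounds guaranteed by $T_s$. Throughout, I will only use the bounds $\underline J/2\le J\le 2\bar J$ valid on $[0,T_s]$, $\varrho_0\le\bar\varrho$, and $\pi\ge 0$; no lower bound on $\varrho_0$ is needed, which is why the dissipation must be measured in the weighted norm $\|G_y/\sqrt{\varrho_0}\|_2$.

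\textbf{Deriving the basic identity.} Multiplying (\ref{eqG}) by $JG$ and integrating over $\mathbb R$ (justified by the regularities recorded in (\ref{reg})), using $J_t=v_y$ to compute $\int JGG_t\,dy=\tfrac12\tfrac{d}{dt}\|\sqrt{J}G\|_2^2-\tfrac12\int v_yG^2\,dy$, and integrating the diffusion term by parts, I expect to obtain
\begin{equation*}
\frac{1}{2}\frac{d}{dt}\|\sqrt{J}G\|_2^2+\mu\left\|\frac{G_y}{\sqrt{\varrho_0}}\right\|_2^2=-\left(\gamma-\tfrac12\right)\int_{\mathbb R}v_yG^2\,dy.
\end{equation*}
From the definition $G=\mu v_y/J-\pi$, we have $v_y=J(G+\pi)/\mu$, so the right-hand side splits into $-\tfrac{2\gamma-1}{2\mu}\int JG^3\,dy$ plus $-\tfrac{2\gamma-1}{2\mu}\int J\pi G^2\,dy$. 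Since $J>0$, $\pi\ge 0$, and $2\gamma-1>0$, the second term has a favorable sign and can be discarded for free.

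\textbf{Closing the inequality.} For the remaining cubic term I combine the 1D Sobolev inequality $\|G\|_\infty^2\le 2\|G\|_2\|G_y\|_2$ with $\|G_y\|_2\le\sqrt{\bar\varrho}\,\|G_y/\sqrt{\varrho_0}\|_2$ (from $\varrho_0\le\bar\varrho$) and $\|G\|_2^2\le(2/\underline J)\|\sqrt{J}G\|_2^2$ (valid on $[0,T_s]$) to get
\begin{equation*}
\left|\int JG^3\,dy\right|\le\|G\|_\infty\|\sqrt{J}G\|_2^2\le C(\bar\varrho,\underline J)\,\|\sqrt{J}G\|_2^{5/2}\left\|\frac{G_y}{\sqrt{\varrho_0}}\right\|_2^{1/2}.
\end{equation*}
Young's inequality absorbs $\mu/2$ of the dissipation into the left-hand side and leaves the scalar differential inequality $y'+\tfrac{\mu}{2}\|G_y/\sqrt{\varrho_0}\|_2^2\le C\,y^{5/3}$ for $y(t):=\|\sqrt{J}G\|_2^2$, with $C=C(\gamma,\mu,\bar\varrho,\underline J)$. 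Solving the comparison ODE $z'=Cz^{5/3}$ explicitly yields a positive $t_*^1=t_*^1(\gamma,\mu,\bar\varrho,\underline J,\|\sqrt{J_0}G_0\|_2)$ on which $y(t)\le 3(1+y(0))$; integrating the inequality in time then provides the claimed $L^2_tL^2_x$ bound on $G_y/\sqrt{\varrho_0}$. Finally, $\|G\|_\infty^4\le 4\|G\|_2^2\|G_y\|_2^2\le C(\bar\varrho,\underline J)\,y(t)\,\|G_y/\sqrt{\varrho_0}\|_2^2$ combined with the two bounds just proved gives the $L^4_tL^\infty_x$ estimate.

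\textbf{Main obstacle.} The delicate point is the cubic nonlinearity $\int v_yG^2$, for two related reasons. First, one must exploit the sign of $\int J\pi G^2\,dy$; without this, a bound on $\|\pi\|_\infty$ or $\|\pi\|_4$ would be required, which is not yet available at this stage of the analysis. Second, all constants must remain independent of any positive lower bound of $\varrho_0$, so $G_y$ must be measured only in the weighted norm $\|G_y/\sqrt{\varrho_0}\|_2$ produced naturally by the integration by parts, and the Sobolev interpolation must be arranged so that after Young's inequality the resulting power of $y$ is only $5/3$ — superlinear but still compatible with short-time closure via the scalar ODE.
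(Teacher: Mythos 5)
Your proof follows the paper's own argument essentially step for step: multiply (\ref{eqG}) by $JG$, rewrite the time term using $J_t=v_y$, discard the sign-favorable $\int J\pi G^2\,dy$ contribution (using $\pi\ge0$, $\gamma>1$), use Gagliardo--Nirenberg together with $\varrho_0\le\bar\varrho$ and $J\ge\underline J/2$ to bound $\|G\|_\infty$ by $\|\sqrt J G\|_2^{1/2}\|G_y/\sqrt{\varrho_0}\|_2^{1/2}$, absorb by Young, and close on a short time interval via a scalar ODE. The only deviation is cosmetic: you keep the superlinear power $y^{5/3}$ after Young's inequality and compare with $z'=Cz^{5/3}$, whereas the paper further majorizes $y^{5/3}$ by $(1+y)^2$ so the comparison ODE is quadratic and $t_*^1$ has the simple closed form $\bigl(2C_1(1+\|\sqrt{J_0}G_0\|_2^2)\bigr)^{-1}$; both variants yield the stated conclusion.
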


\begin{proof}
Multiplying (\ref{eqG}) by $JG$, and integrating the resultant over
$\mathbb R$, one gets by integration by parts that
\begin{equation}\label{G.2}
  \int_{\mathbb R}JGG_tdy+\mu\int_{\mathbb
  R}\frac{(G_y)^2}{\varrho_0}dy=-\gamma\int_{\mathbb R}
  v_y G^2dy.
\end{equation}
Then (\ref{lcnsJ}) shows
\begin{equation*}
  \int_{\mathbb R}JGG_tdy =  \frac12\left(\frac{d}{dt}\int_{\mathbb R}JG^2dy- \int_{\mathbb R}v_y
  G^2dy\right),
\end{equation*}
which, together with (\ref{G.2}), yields
\begin{eqnarray}\label{G.3}
   \frac12\frac{d}{dt}\|\sqrt JG\|_2^2+\mu\left\|\frac{G_y}{\sqrt\varrho_0}\right\|_2^2
   =\left(\frac12-\gamma\right)\int_{\mathbb R} v_yG^2 dy.
\end{eqnarray}
It follows from $v_y= \frac{J}{\mu}(G+\pi)$ that
\begin{eqnarray*}
  \left(\frac12-\gamma\right)\int_{\mathbb R} v_yG^2 dy
  &=&\frac{1-2\gamma}{2\mu}\int_{\mathbb R}J(G+\pi)G^2 dy\\
  &\leq& \frac{1-2\gamma}{2\mu}\int_{\mathbb R}JG^3 dy
  \leq\frac\gamma\mu\|G\|_\infty\|\sqrt JG\|_2^2,
\end{eqnarray*}
where we have used
$\pi\geq0$ and $\gamma>1$.
Therefore, it follows from (\ref{G.3}) that
\begin{equation}
  \label{M.1}
  \frac12\frac{d}{dt}\|\sqrt JG\|_2^2+\mu\left\|\frac{G_y}{\sqrt\varrho_0}\right\|_2^2\leq  \frac\gamma\mu\|G\|_\infty\|\sqrt JG\|_2^2.
\end{equation}
By the Gagliardo-Nirenberg inequality $\|f\|_{L^\infty(\mathbb R)}\leq
C\|f\|_{L^2(\mathbb R)}^{\frac12}\|f'\|_{L^2(\mathbb R)}^{\frac12}$, one has
\begin{equation}\label{A.1-1}
\|G\|_\infty\leq C\|G\|_2^{\frac12}\|G_y\|_2^{\frac12}\leq C(\bar\varrho,\underline J)
\|\sqrt JG\|_2^{\frac12}\left\|\frac{G_y}{\sqrt{\varrho_0}}\right\|_2^{\frac12}.
\end{equation}
Combining (\ref{M.1}) and (\ref{A.1-1}), one obtains
from the Young inequality that
\begin{eqnarray*}
\frac12\frac{d}{dt}\|\sqrt JG\|_2^2+\mu\left\|\frac{G_y}{\sqrt\varrho_0}\right\|_2^2&\leq & C(\gamma,\mu,\bar\rho,\underline J)\|\sqrt JG\|_2^{\frac52}\left\|\frac{G_y}{\sqrt{\varrho_0}}\right\|_2^{\frac12}\\
  &\leq&\frac\mu2\left\|\frac{G_y}{\sqrt{\varrho_0}} \right\|_2^2
  +C(\gamma,\mu,\bar\rho,\underline J)(1+\|\sqrt JG\|_2^2)^2
\end{eqnarray*}
and, thus,
\begin{equation}
  \frac{d}{dt}(1+\|\sqrt
  JG\|_2^2)+\mu\left\|\frac{G_y}{\sqrt{\varrho_0}} \right\|_2^2
  \leq C_1(\gamma,\mu,\bar\rho,\underline J)(1+\|\sqrt JG\|_2^2)^2.\label{A.2}
\end{equation}
Solving (\ref{A.2}) yields
\begin{eqnarray*}
  -(1 +\|\sqrt JG\|_2^2)^{-1}(t)&\leq& -(1
  +\|\sqrt J_0G_0\|_2^2)^{-1}+C_1\left(\gamma, \mu, \bar\varrho,\underline J\right)t\\
  &\leq&-\frac{1}{2}(1+\|\sqrt{J_0}G_0\|_2^2)^{-1},
\end{eqnarray*}
for any $t\in[0,T_1^*)$, where
\begin{eqnarray}
  &T_*^1:=\min\{1,t_*^1,T_s\},
  \quad t_*^1:=\frac{1}{2(1+\|\sqrt{J_0}G_0\|_2^2)C_1\left(\gamma, \mu, \bar\varrho,\underline J\right)}.
\end{eqnarray}
Therefore, we have
\begin{equation*}
  \label{A.4}
  \sup_{0\leq t<T_*^1}(1 +\|\sqrt JG\|_2^2)\leq 2(1+\|\sqrt{J_0} G_0\|_2^2),
\end{equation*}
and further from (\ref{A.2}) that
\begin{eqnarray*}
  &&\sup_{0\leq t< T_*^1} \|\sqrt JG\|_2^2 + \mu \int_0^{T_*^1}\left\|\frac{G_y}{\sqrt{\varrho_0}}
  \right\|_2^2 dt
  \leq 3(1+\|\sqrt{J_0}G_0\|_2^2).
\end{eqnarray*}
The estimate $\int_0^{T_*^1}\|G\|_\infty^4dt\leq C(\mu,\bar\varrho,\underline J, \|G_0\|_2)$ follows from the above estimate and (\ref{A.1-1}). The proof is complete.
\end{proof}

Based on Proposition \ref{propGphi}, we can derive the following estimates on $(J,v,\pi)$.

\begin{proposition}
  \label{proppi}
(i) Let $T_*^1$ be as in Proposition \ref{propGphi}. Then, it holds that
\begin{eqnarray*}
\sup_{0\leq t\leq T_*^1}\left\|\left(\pi, \frac{\pi_y}{\sqrt{\varrho_0}},J-J_0,J_t, \frac{J_y}{\sqrt{\varrho_0}}, v_y\right)\right\|_2&\leq& C,\\
\int_0^{T_*^1}
  \left(\|\pi_t\|_2^4+\left\|\left(\sqrt{\varrho_0}v_t,\frac{v_{yy}} {\sqrt{\varrho_0}}\right)\right\|_2^2\right)dt&\leq& C,
\end{eqnarray*}
and
\begin{eqnarray*}
\sup_{0\leq t\leq T_*^1}\|\sqrt{\varrho_0}v\|_{L^2((-R,R))}\leq \|v_0\|_{L^2((-R,R))} +C,
\end{eqnarray*}
for a positive constant $C$ depending only on $\gamma,\mu,\bar\varrho,\underline J,\bar J,\big\|\frac{J_0'}{\sqrt{\varrho_0}}\big\|_2, \|G_0\|_2,\|\pi_0\|_2,$ and $\left\|\frac{\pi_0'} {\sqrt{\varrho_0}}\right\|_2$, but independent of $\underline\varrho$.

(ii) There is a positive constant $t_*^2$ depending only on $\gamma, \mu, \bar\varrho, \underline J, \bar J, \|G_0\|_2,$ and $\|\pi_0\|_\infty$, but independent of $\underline\varrho$, such that
$$
\frac34\underline J\leq J(y,t)\leq\frac54\bar J,\quad \mbox{ on }\mathbb R\times[0,T_*^2),
$$
where $T_*^2:=\min\{T_*^1,t_*^2\}=\min\{1,t_*^1, t_*^2, T_s\}$, with $T_s$ defined by (\ref{Ts}).
\end{proposition}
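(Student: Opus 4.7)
My plan is to reduce every estimate in (i) to the controls on $G$ furnished by Proposition \ref{propGphi} and an $L^\infty$ bound on $\pi$, via the identities $v_y/J=\mu^{-1}(G+\pi)$ and $\varrho_0 v_t=G_y$ (the latter being the rewriting of (\ref{lcnsv}) using the definition of $G$). The starting point is the pointwise bound on $\pi$: since (\ref{lcnspi}) is an ODE in $t$ at each fixed $y$, multiplying it by $J^\gamma$ and using $J_t=v_y$ yields $(J^\gamma\pi)_t=(\gamma-1)\mu^{-1}J^\gamma(G+\pi)^2$. Integrating in $t$, using $\pi_0\geq0$ and the two-sided bound $\tfrac{\underline J}{2}\leq J\leq 2\bar J$ valid on $[0,T_s]$, gives
\[
\pi(y,t)\leq C\|\pi_0\|_\infty+C\int_0^t\bigl(\|G(s)\|_\infty^2+\pi(y,s)^2\bigr)\,ds.
\]
Since $\|G\|_\infty\in L^4(0,T_*^1)$ by Proposition \ref{propGphi}, a Gronwall argument gives a uniform bound $\|\pi\|_{L^\infty(0,T_*^1;L^\infty)}\leq C$ depending only on the stated quantities.

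With the $L^\infty$ bound on $\pi$ in hand, most of the remaining estimates are immediate. The identity $v_y=\mu^{-1}J(G+\pi)$ controls $\|v_y\|_2$ in terms of $\|\sqrt{J}G\|_2$ and $\|\pi\|_2$; the latter is obtained by multiplying (\ref{lcnspi}) by $\pi$, integrating, and applying Gronwall. Then $J_t=v_y\in L^\infty(0,T_*^1;L^2)$ and $J-J_0=\int_0^tJ_s\,ds\in L^\infty(0,T_*^1;L^2)$. The rewriting $\varrho_0 v_t=G_y$ of (\ref{lcnsv}) gives $\sqrt{\varrho_0}v_t=G_y/\sqrt{\varrho_0}\in L^2(0,T_*^1;L^2)$ directly from Proposition \ref{propGphi}. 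For $\pi_t\in L^4(0,T_*^1;L^2)$, I read off from (\ref{lcnspi}) that $\pi_t=\mu^{-1}[(\gamma-1)(G+\pi)^2-\gamma(G+\pi)\pi]$ and bound $\|\pi_t\|_2$ by $C(\|G\|_\infty+\|\pi\|_\infty)(\|G\|_2+\|\pi\|_2)$, which is in $L^4(0,T_*^1)$. The local estimate on $\sqrt{\varrho_0}v$ follows from $\sqrt{\varrho_0}v(t)=\sqrt{\varrho_0}v_0+\int_0^t\sqrt{\varrho_0}v_s\,ds$ by taking the $L^2((-R,R))$ norm and applying the $L^2(0,T_*^1;L^2)$ bound on $\sqrt{\varrho_0}v_t$.

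The main technical obstacle is the coupled estimate for $\pi_y/\sqrt{\varrho_0}$, $J_y/\sqrt{\varrho_0}$, and $v_{yy}/\sqrt{\varrho_0}$. Differentiating (\ref{lcnspi}) in $y$ and using $(v_y/J)_y=\mu^{-1}(G_y+\pi_y)$ produces an evolution equation for $\pi_y$ whose source terms have the form $G_y(G+\pi)$ and whose absorption terms have the form $\pi_y(G+\pi)$. Multiplying by $\pi_y/\varrho_0$, integrating over $\mathbb R$, and using Cauchy--Schwarz together with the $L^\infty$ bounds on $G$ and $\pi$ and the $L^2(0,T_*^1;L^2)$ bound on $G_y/\sqrt{\varrho_0}$, I obtain
\[
\frac{d}{dt}\Bigl\|\frac{\pi_y}{\sqrt{\varrho_0}}\Bigr\|_2^2\leq C(\|G\|_\infty+\|\pi\|_\infty)\Bigl(\Bigl\|\frac{\pi_y}{\sqrt{\varrho_0}}\Bigr\|_2^2+\Bigl\|\frac{G_y}{\sqrt{\varrho_0}}\Bigr\|_2^2\Bigr),
\]
and Gronwall closes the bound $\pi_y/\sqrt{\varrho_0}\in L^\infty(0,T_*^1;L^2)$. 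The relation $(J_y/\sqrt{\varrho_0})_t=v_{yy}/\sqrt{\varrho_0}$ together with the expansion $v_{yy}/\sqrt{\varrho_0}=\mu^{-1}[(J_y/\sqrt{\varrho_0})(G+\pi)+J(G_y+\pi_y)/\sqrt{\varrho_0}]$ then yields a Gronwall inequality for $\|J_y/\sqrt{\varrho_0}\|_2$, closing it via the bound just obtained; in the same step one reads off $v_{yy}/\sqrt{\varrho_0}\in L^2(0,T_*^1;L^2)$.

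For part (ii), the identity $(\log J)_t=v_y/J=\mu^{-1}(G+\pi)$ and time integration give
\[
\Bigl|\log\frac{J(y,t)}{J_0(y)}\Bigr|\leq\mu^{-1}\int_0^t(\|G(s)\|_\infty+\|\pi(s)\|_\infty)\,ds\leq\mu^{-1}\bigl(t^{3/4}\|G\|_{L^4(0,T_*^1;L^\infty)}+t\|\pi\|_{L^\infty L^\infty}\bigr).
\]
Choosing $t_*^2$ small enough to make the right-hand side smaller than $\log(5/4)$ then forces $\tfrac{4}{5}J_0\leq J(y,t)\leq\tfrac{5}{4}J_0$, hence $\tfrac34\underline J\leq J(y,t)\leq\tfrac54\bar J$, on $[0,T_*^2)$. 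Since the $L^4 L^\infty$ bound on $G$ depends only on $\mu,\bar\varrho,\underline J,\|G_0\|_2$ and the $L^\infty$ bound on $\pi$ depends additionally on $\gamma,\bar J,\|\pi_0\|_\infty$, the time $t_*^2$ has the dependencies required in the statement, completing the proof.
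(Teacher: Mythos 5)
Your proposal follows essentially the same architecture as the paper's proof (reduce everything to $G$-estimates from Proposition \ref{propGphi} via $v_y/J=\mu^{-1}(G+\pi)$ and $\varrho_0 v_t=G_y$, then a Gronwall for $\pi_y/\sqrt{\varrho_0}$, an ODE argument for $J_y/\sqrt{\varrho_0}$, and a smallness-in-time argument for part (ii)), but the very first step — the pointwise $L^\infty$ bound on $\pi$ that feeds every subsequent estimate — has a genuine gap. You replace the equation for $\pi$ by the monotonicity identity $(J^\gamma\pi)_t=\frac{\gamma-1}{\mu}J^\gamma(G+\pi)^2$, which, after using the two-sided bound on $J$ on $[0,T_s]$, gives
\[
\pi(y,t)\leq C\|\pi_0\|_\infty+C\int_0^t\bigl(\|G(s)\|_\infty^2+\pi(y,s)^2\bigr)\,ds.
\]
This is a Riccati-type inequality with the \emph{wrong sign} on $\pi^2$; the standard Gronwall lemma does not apply. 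A comparison argument shows the upper solution can blow up in a time that shrinks as $\|\pi_0\|_\infty$ grows, whereas $T_*^1$ from Proposition \ref{propGphi} is fixed by $\gamma,\mu,\bar\varrho,\underline J,\|\sqrt{J_0}G_0\|_2$ and is \emph{not} adapted to $\|\pi_0\|_\infty$. So your "Gronwall argument gives a uniform bound" is not justified on $[0,T_*^1]$.

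The source of the difficulty is that passing to $J^\gamma\pi$ discards the favorable $-\pi^2/\mu$ term in the original $\pi$-equation. Expanding $(\ref{lcnspi})$ with $v_y/J=\mu^{-1}(G+\pi)$ gives
\[
\pi_t=\frac{1}{\mu}\Bigl[(\gamma-1)G^2+(\gamma-2)G\pi-\pi^2\Bigr],
\]
so the quadratic-in-$\pi$ term is damping. The paper's fix is the completion of the square (see (\ref{pi.1})):
\[
\pi_t+\frac{1}{\mu}\Bigl(\pi+\tfrac{2-\gamma}{2}G\Bigr)^2=\frac{\gamma^2}{4\mu}G^2,
\]
which immediately yields $0\leq\pi(y,t)\leq\pi_0(y)+\frac{\gamma^2}{4\mu}\int_0^tG^2\,ds$ with no Gronwall at all, and hence $\|\pi\|_\infty$, $\|\pi\|_2$, and $\|\pi_t\|_2$ estimates directly in terms of $\|G\|_\infty$ and $\|G\|_2$. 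Alternatively, your route could be salvaged by dropping the $-\pi^2$ term in the display above and running a \emph{linear} Gronwall against $G\pi$, but that is not what you wrote. Once the $L^\infty$ bound on $\pi$ is obtained in one of these two ways, the rest of your argument — the $\|\pi\|_2$ bound, the derivative estimate for $\pi_y/\sqrt{\varrho_0}$, the coupled $J_y/\sqrt{\varrho_0}$ and $v_{yy}/\sqrt{\varrho_0}$ bounds, the $\sqrt{\varrho_0}v$ bound, and the smallness argument for part (ii) — proceeds exactly as in the paper and is correct.
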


\begin{proof}
(i) Equation (\ref{lcnspi}) can be rewritten in terms of $G$ as
\begin{equation}
  \pi_t+\frac1\mu\left(\pi+\frac{2-\gamma}{2} G \right)^2= \frac{\gamma^2}{4\mu} G ^2, \label{pi.1}
\end{equation}
from which, one obtains
$$
0\leq\pi(y,t)\leq\pi_0(y)+\frac{\gamma^2}{4\mu}\int_0^t G^2(y,\tau)d\tau.
$$
Thanks to the above, it follows from Proposition \ref{propGphi} and the
H\"older inequality that
\begin{equation}
\sup_{0\leq t\leq T_*^1}\|\pi\|_2 \leq  \|\pi_0\|_2+
\frac{\gamma^2}{4\mu}\int_0^{T_*^1}\|G\|_\infty\|G\|_2dt
\leq\|\pi_0\|_2+C\left(\gamma,\mu,\bar\varrho,\underline J,\bar J,\|G_0\|_2\right)
 \label{pi.2}
\end{equation}
and
\begin{equation}\label{pi.3}
\sup_{0\leq t\leq T_*^1}\|\pi\|_\infty \leq  \|\pi_0\|_\infty+
  \frac{\gamma^2}{4\mu}\int_0^{T_*^1}\|G\|_\infty^2dt
  \leq \|\pi_0\|_\infty+C\left(\gamma,\mu,\bar\varrho, \underline J,\bar J,\|G_0\|_2\right).
\end{equation}
Hence, it follows from (\ref{pi.1}) and Proposition \ref{propGphi} that
\begin{eqnarray}
\int_0^{T_*^1}\|\pi_t\|_2^4dt&\leq& C \int_0^{T_*^1}(\|G\|_\infty^4+\|\pi\|_\infty^4)
(\|G\|_2^4+\|\pi\|_2^4) dt\nonumber\\
&\leq& C(\gamma,\mu,\bar\rho,\underline J,\bar J, \|G_0\|_2, \|\pi_0\|_2, \|\pi_0\|_\infty).
\label{pi.3-1}
\end{eqnarray}
Differentiating equation (\ref{pi.1}) with respect to $y$ yields
\begin{equation*}
  \pi_{yt}+\frac2\mu\left(\pi+\frac{2-\gamma}{2} G \right)
  \left(\pi_y+\frac{2-\gamma}{2} G_y \right)= \frac{\gamma^2}{2\mu} G G_y.
\end{equation*}
Multiplying the above equation by $\frac{\pi_y}{\varrho_0}$ and integrating over $\mathbb R$, it follows from the H\"older and Cauchy inequalities that
\begin{eqnarray*}
\frac{d}{dt}\left\|\frac{\pi_y}{\sqrt{\varrho_0}} \right\|_2^2&\leq&C(\gamma,\mu)(\|\pi\|_\infty+\|G\|_\infty)
\left(\left\|\frac{\pi_y}{\sqrt{\varrho_0}}\right\|_2+\left\|\frac{G_y} {\sqrt{\varrho_0}}\right\|_2\right) \left\|\frac{\pi_y}{\sqrt{\varrho_0}}\right\|_2\\
&\leq&C(\gamma,\mu)\left[\left\|\frac{G_y} {\sqrt{\varrho_0}}\right\|_2^2+
(1+\|\pi\|_\infty^2+\|G\|_\infty^2) \left\|\frac{\pi_y}{\sqrt{\varrho_0}}\right\|_2^2\right],
\end{eqnarray*}
from which, by Proposition \ref{propGphi}, and (\ref{pi.3}), it follows from  the Gronwall inequality that
\begin{equation}\label{pi.4}
  \sup_{0\leq t\leq T_*^1}\left\|\frac{\pi_y}{\sqrt{\varrho_0}}\right\|_2^2\leq C\left(\gamma,\mu,\bar\varrho,\underline J,\bar J, \|G_0\|_2,\|\pi_0\|_\infty, \left\|\frac{\pi_0'} {\sqrt{\varrho_0}}\right\|_2\right).
\end{equation}

Recalling the definition of $G$, one can rewrite the equation for $J$ as
\begin{equation}
  \label{J.1}
  J_t =\frac{J}{\mu}(G+\pi),
\end{equation}
from which, we deduce
\begin{eqnarray*}
  \|J-J_0\|_2+\|J_t\|_2&=&\left\|\int_0^tJ_td\tau\right\|_2+\|J_t\|_2\\
&\leq&\frac{2\bar J}{\mu}\left(\int_0^t(\|G\|_2+\|\pi\|_2)d\tau +\|G\|_2+\|\pi\|_2\right),
\end{eqnarray*}
and, thus, it follows from Proposition \ref{propGphi} and (\ref{pi.2})
that
\begin{equation}\label{J.1-1}
  \sup_{0\leq t\leq T_*^1}(\|J-J_0\|_2+\|J_t\|_2)\leq
  C(\gamma,\mu,\bar\rho,\underline J,\bar J,\|G_0\|_2,\|\pi_0\|_2).
\end{equation}
Solving the ordinary differential equation (\ref{J.1}) yields
\begin{equation*}
  J(y,t)=\exp\left\{\frac1\mu\int_0^t(G+\pi)d\tau\right\}J_0(y)
\end{equation*}
and, thus,
\begin{equation*}
  J_y=\left(\frac1\mu\int_0^t(G_y+\pi_y) d\tau J_0+J_0'\right)\exp\left\{\frac1\mu\int_0^t(G+\pi)d\tau\right\},
\end{equation*}
from which, applying Propositions \ref{propGphi}, and using (\ref{pi.3})
and (\ref{pi.4}), one obtains
\begin{eqnarray}
  \sup_{0\leq t\leq T_*^1}\left\|\frac{J_y}{\sqrt{\varrho_0}}\right\|_2
  &\leq&\left(\frac{\bar J}{\mu}\int_0^{T_*^1}\left\|\left(\frac{G_y}{\sqrt{\varrho_0}} ,\frac{\pi_y}{\sqrt{\varrho_0}}\right) \right\|_2 dt+\left\|\frac{J_0'}{\sqrt{\varrho_0}}\right\|_2\right)\nonumber\\
  &&\times\exp\left\{\frac1\mu\int_0^{T_*^1}\|(G,\pi)\|_\infty
  dt\right\} \leq C,\label{pi.5}
\end{eqnarray}
for a positive constant $C$ depending only on $\gamma,\mu,\bar\varrho,\underline J, \bar J, \big\|\frac{J_0'}{\sqrt{J_0}}\big\|_2,\|G_0\|_2, \|\pi_0\|_2$, and $\big\|\frac{\pi_0'} {\sqrt{\varrho_0}}\big\|_2$,
but independent of $\underline\varrho$.

Recalling the definition of $G$, and noticing that
$\varrho_0v_t=G_y,$ one gets from Proposition \ref{propGphi} and (\ref{pi.2}) that
\begin{equation}
\sup_{0\leq t\leq T_*^1}\|v_y\|_2^2
=\sup_{0\leq t\leq T_*^1}\left\|\frac{J}{\mu}(G+\pi)\right\|_2^2
\leq C(\gamma,\mu,\bar\varrho,\underline J,\bar J,\|G_0\|_2,\|\pi_0\|_2)\label{pi.5-1}
\end{equation}
and
\begin{equation}\label{pi.6}
 \int_0^{T_*^1}\|\sqrt{\varrho_0}v_t\|_2^2dt
=\int_0^{T_*^1} \left\|\frac{G_y}{\sqrt{\varrho_0}}\right\|_2^2 dt
\leq C(\mu,\bar\varrho,\underline J,\bar J,\|G_0\|_2).
\end{equation}
Therefore, it holds that
\begin{eqnarray*}
\sup_{0\leq t\leq T_*^1}\|\sqrt{\varrho_0}v\|_{L^2((-R,R))}&=&\sup_{0\leq t\leq T_*^1} \left\|\sqrt{\varrho_0}v_0+\int_0^t \sqrt{\varrho_0}v_td\tau\right\|_{L^2((-R,R))}\\
&\leq&\|\sqrt{\varrho_0}v_0\|_{L^2((-R,R))} +\int_0^{T_*^1}\|\sqrt{\varrho_0}v_t\|_2d\tau\\
&\leq&\|\sqrt{\varrho_0}v_0\|_{L^2((-R,R))} +C(\mu,\bar\varrho,\underline J,\bar J,\|G_0\|_2),
\end{eqnarray*}
for any $0<R\leq\infty$. Noticing that
\begin{equation*}
  v_{yy}=\left[\frac{J}{\mu}(G+\pi)\right]_y=\frac{J_y}{\mu}(G+\pi) +\frac{J}{\mu}(G_y+\pi_y)
\end{equation*}
it follows from Proposition \ref{propGphi}, (\ref{pi.3}),
(\ref{pi.4}), (\ref{pi.5}), and the H\"older inequality that
\begin{equation}
  \int_0^{T_0}\left\|\frac{v_{yy}}{\sqrt{\varrho_0}}\right\|_2^2 dt  \leq C\int_0^{T_0}\left(\left\|\left(\frac{G_y}{\sqrt{\varrho_0}} ,\frac{\pi_y}{\sqrt{\varrho_0}}\right)\right\|_2^2+\|(G,\pi)\|_\infty^2 \left\|\frac{J_y}{\sqrt{\varrho_0}}\right\|_2^2\right)dt
\leq C,\label{pi.7}
\end{equation}
for a positive constant $C$ depending only on $\gamma,\mu,\bar\varrho,\underline J, \bar J, \|G_0\|_2,\|\pi_0\|_2,$ and $\left\|\frac{\pi_0'} {\sqrt{\varrho_0}}\right\|_2$, but independent of $\underline\varrho$.

(ii) Due to Proposition \ref{propGphi} and (\ref{pi.3}), one gets
from the H\"older inequality that
\begin{eqnarray*}
  \frac1\mu\int_0^t\|J(G+\pi)\|_\infty d\tau
  &\leq& \frac{2\bar J}{\mu}\int_0^t(\|G\|_\infty+\|\pi\|_\infty) d\tau\\
  &\leq&C t+\frac2\mu\left(\int_0^t\|G\|_\infty^4d\tau \right)^{\frac14} t^{\frac34}\leq C_2t^{\frac34}\leq\frac{\underline J}{4} ,
\end{eqnarray*}
for any $t\in[0,T_*^2)$, where
$$
T_*^2:=\min\{T_*^1,t_*^2\}=\min\{t_*^1,t_*^2,T_s\},\quad t_*^2:=\left(\frac{\underline J}{4C_2}\right)^{\frac43},
$$
for a positive constant $C_2$ depending only on $\gamma, \mu, \bar\varrho,\underline J,\bar J,\|G_0\|_2,$ and $\|\pi_0\|_\infty$, but independent of $\underline\varrho$.
Consequently, it follows from (\ref{J.1}) that
$$
  |J-J_0|=\left|\int_0^tJ_td\tau\right| =\left|\int_0^t\frac{J}{\mu}(G+\pi)d\tau\right| \leq \frac1\mu\int_0^t\|J(G+\pi)\|_\infty d\tau\leq\frac{\underline J}{4},
$$
which implies
$$
\frac34\underline J\leq J_0-\frac{\underline J}{4}\leq J\leq J_0+\frac{\underline J}{4}\leq\bar J+\frac{\bar J}{4}=\frac54\bar J\quad\mbox{ on }\mathbb R\times[0,T_2^*),
$$
which proves (ii).
\end{proof}

Thanks to the estimates stated in Proposition \ref{proppi}, one can evaluate the lower bound of the time $T_s$ as stated in the following
proposition:

\begin{proposition}\label{propestTs}
Let $T_s$ be defined by (\ref{Ts}), $t_*^1$ and $T_*^1$ be the constants stated in Proposition \ref{propGphi}, and $t_*^2$ and $T_*^2$ the constants in Proposition \ref{proppi}. Then, we have $
T_s>T_*^2$ and, consequently, $T_*^1\geq T_*^2\geq\min\{1,t_*^1,t_*^2\}$.
\end{proposition}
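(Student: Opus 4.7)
The plan is a contradiction argument. By construction $T_*^2 = \min\{1, t_*^1, t_*^2, T_s\} \leq T_s$, so I would assume $T_*^2 = T_s$ and derive a contradiction. The crucial input is Proposition \ref{proppi}(ii), which already pins $J$ into the strictly tighter window $[\frac34\underline J, \frac54\bar J]$ on $\mathbb R \times [0, T_*^2)$, well inside the open band $(\frac{\underline J}{2}, 2\bar J)$ used in the definition of $T_s$. So the whole issue reduces to being able to propagate this tighter control a tiny bit past $T_*^2 = T_s$.

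In the generic case $T_s < T_\infty$, I would exploit the regularity $J - J_0 \in C([0,T]; H^1)$ (for every $T < T_\infty$, from the extension procedure in Section \ref{seclocnov}) together with the one-dimensional embedding $H^1(\mathbb R) \hookrightarrow L^\infty(\mathbb R)$, to view $J$ as a continuous $L^\infty(\mathbb R)$-valued curve on $[0, T_\infty)$. Uniform continuity then lets me pass the bounds $\frac34\underline J \leq J \leq \frac54\bar J$ to the closed interval $[0,T_s]$ and, by a further continuity step just beyond $T_s$, conclude that the weaker inequalities $\frac{\underline J}{2} \leq J \leq 2\bar J$ persist on some $[0, T_s + \varepsilon]$ with $T_s + \varepsilon < T_\infty$. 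This directly contradicts the definition of $T_s$ as a supremum.

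The degenerate case $T_s = T_\infty$ needs a separate treatment, and this is the step I expect to require the most care. If $T_\infty = \infty$, the claim is immediate since $T_*^2 \leq 1 < T_s$. If instead $T_\infty < \infty$, the blow-up criterion (\ref{crit}) forces $\varlimsup_{j\to\infty} \ell(T_j) = \infty$; but on $[0, T_*^2) = [0, T_\infty)$ every constituent of $\ell(t)$ can be bounded using Proposition \ref{proppi} together with the assumptions of Section \ref{seclocnov}: Proposition \ref{proppi}(ii) controls both $\|J\|_\infty$ and $(\inf_y J)^{-1}$; the $\sqrt{\varrho_0}$-weighted estimates on $J_y$ and $\pi_y$ in Proposition \ref{proppi}(i) become plain $L^2$ estimates via $\varrho_0 \leq \bar\varrho$; and since this section is set in the non-vacuum regime $\varrho_0 \geq \underline\varrho$, the inequality $\|v\|_2 \leq \underline\varrho^{-1/2}\|\sqrt{\varrho_0}\,v\|_2$, combined with the $\|\sqrt{\varrho_0}\,v\|_{L^2((-R,R))}$ bound of Proposition \ref{proppi}(i) taken over $R$, recovers $\|v\|_{H^1}$; finally $\|\pi\|_{H^1}$ is similarly bounded. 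The resulting uniform bound of $\ell$ on $[0, T_\infty)$ contradicts (\ref{crit}), closing this case.

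Once $T_s > T_*^2$ is established, $T_s$ is excluded from the minimum defining $T_*^2 = \min\{1, t_*^1, t_*^2, T_s\}$, yielding $T_*^2 = \min\{1, t_*^1, t_*^2\}$, while $T_*^1 \geq T_*^2$ is immediate from the monotonicity of the $\min$. The heart of the argument is really just the observation that Proposition \ref{proppi}(ii) gives a strictly stronger inequality than the one defining $T_s$, with the only delicate point being the exclusion of the pathological scenario $T_s = T_\infty$ via the continuation criterion.
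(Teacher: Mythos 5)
Your argument matches the paper's proof essentially step for step: the contradiction assumption $T_s=T_*^2$, the split into the cases $T_s<T_\infty$ and $T_s=T_\infty$, the Sobolev-embedding-plus-continuity argument to push the $J$-bounds past $T_*^2$ in the first case, and the uniform bound on $\ell(t)$ followed by an appeal to the continuation criterion (\ref{crit}) in the second. Your explicit separation of $T_\infty=\infty$ is a small extra remark the paper leaves implicit (it is subsumed because $T_*^2\leq 1$ already forces $T_\infty<\infty$ in that branch), but it does not change the substance of the proof.
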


\begin{proof}
Assume, by contradiction, that $T_s\leq T_*^2$. Recall that $T_*^2=\min\{1,t_*^1, t_*^2,T_s\}$, which gives $T_*^2\leq T_s$ and, therefore, $T_s=T_*^2$.

If $T_s<T_\infty$, then $T_*^2=T_s<T_\infty$. Recalling that (\ref{reg})
holds for any $T<T_\infty$, we have
$(J-J_0,v,\pi)\in C([0,T_\infty); H^1(\mathbb R)),$
which, by the embedding $H^1(\mathbb R)\hookrightarrow L^\infty(\mathbb
R)$, we have $J\in C([0,T_\infty); L^\infty(\mathbb R))$. Thanks to this
and by (ii) of Proposition \ref{proppi}, there is a positive time
$T_*\in(T_*^2,T_\infty)$, such that $\frac12\underline J
\leq J\leq\frac32\bar J$ on $\mathbb R\times[0,T_*]$. By the definition
of $T_s$, then $T_*\leq T_s=T_*^2$, which contradicts to
$T_*\in(T_*^2,T_\infty)$.

If $T_s=T_\infty$, then $T_s=T_*^2=T_\infty$. By Proposition
\ref{proppi}, and noticing that $T_*^2\leq T_*^1$, we have
\begin{equation*}
  \sup_{0\leq t<T_*^2}(\|J_y\|_2+ \|\sqrt{\varrho_0}v\|_2^2+\|v_y\|_2^2 +\|\pi\|_{H^1})<\infty
\end{equation*}
and $\frac34\underline J\leq J\leq\frac54\bar J$ on
$\mathbb R\times[0,T_*^2)$. Therefore, recalling that
$\varrho_0\geq\underline{\varrho}>0$, we have
$\varlimsup_{t\rightarrow T_\infty}\ell(t)=\varlimsup_{t\rightarrow T_*^2}\ell(t)<\infty,$
which contradicts to (\ref{crit}).

Combining the statements of the above two paragraphes yields 
$T_s>T_*^2$. By the aid of this, and recalling the definition of
$T_*^2=\min\{1,t_*^1,t_*^2,T_s\}$, we have
$T_*^2=\min\{1,t_*^1,t_*^2\}$. This proves the conclusion.
\end{proof}

Then Propositions \ref{prop3.1}--\ref{propestTs} give the following:

\begin{corollary}
  \label{corTextEst}
Given a function $\varrho_0$ satisfying
$\underline\varrho\leq\varrho_0\leq\bar\varrho$ on $\mathbb R$,
for two positive constants $\underline\varrho$ and $\bar\varrho$.
Assume that the initial data $(J_0, v_0, \pi_0)$ satisfies
\begin{eqnarray*}
&\underline J\leq J_0\leq\bar J\mbox{ on }\mathbb R, \quad J_0'\in L^2, \quad v_0\in L^1_{loc},\quad v_0'\in L^2,\quad0\leq\pi_0\in H^1,
\end{eqnarray*}
for two positive constants $\underline J$ and $\bar J$.

Then, there is a positive time $T_0$ depending only on $\gamma,\mu,\bar\varrho,\underline J,\bar J,\|v_0'\|_2$, $\|\pi_0\|_2$, and $\|\pi_0\|_\infty$, but independent of $\underline\varrho$, such that system (\ref{lcnsJ})--(\ref{lcnspi}), subject to (\ref{IC}), has a unique solution $(J, v,\pi)$ on $\mathbb R\times[0,T_0]$,
satisfying
\begin{align*}
&\pi\geq0, \quad \frac34\underline J\leq J\leq\frac54\bar J,\quad\mbox{on }\mathbb R\times[0,T_0],\\
&\sup_{0\leq t\leq T_0}\left\|\left(\pi, \frac{\pi_y}{\sqrt{\varrho_0}},J-J_0,J_t, \frac{J_y}{\sqrt{\varrho_0}}, v_y\right)\right\|_2\leq C,\\
&\int_0^{T_0}
  \left(\|\pi_t\|_2^4+\left\|\left(\sqrt{\varrho_0}v_t,\frac{v_{yy}} {\sqrt{\varrho_0}}\right)\right\|_2^2\right)dt  \leq C, \\
&\sup_{0\leq t\leq T_0}\|\sqrt{\varrho_0}v\|_{L^2((-R,R))} \leq \|\sqrt{\varrho_0}v_0\|_{L^2((-R,R))}+C,
\end{align*}
for any $0<R\leq\infty$,
where $C$ is a positive constant depending only on $\gamma,\mu,\bar\varrho,\underline J,\bar J,$ $\big\|\frac{J_0'}{\sqrt{\varrho_0}}\big\|_2,\|v_0'\|_2, \|\pi_0\|_2,$ and $\big\| \frac{\pi_0'}{\sqrt{\varrho_0}}\big\|_2$, but independent of $\underline\varrho$.
\end{corollary}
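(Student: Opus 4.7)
The plan is to assemble the preceding propositions into the claimed corollary. The first thing to reconcile is that Proposition~\ref{prop3.1} asks for $v_0\in H^1(\mathbb R)$ whereas here we only have $v_0\in L^1_{loc}$ and $v_0'\in L^2$. Since $\underline\varrho\le\varrho_0\le\bar\varrho$ on $\mathbb R$, I would approximate $v_0$ by a sequence $v_0^n\in H^1(\mathbb R)$ (via truncation and mollification) with $(v_0^n)'\to v_0'$ in $L^2$, $v_0^n\to v_0$ in $L^1_{loc}$, and $\|(v_0^n)'\|_2\le\|v_0'\|_2+o(1)$. For each $n$, Proposition~\ref{prop3.1} produces a unique local strong solution $(J^n,v^n,\pi^n)$ with initial data $(J_0,v_0^n,\pi_0)$ on some $[0,T_1^n]$, which is then extended by the iterative continuation argument following Proposition~\ref{prop3.1} to a maximal interval $(0,T_\infty^n)$, with blow-up criterion \eqref{crit} if $T_\infty^n<\infty$.

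Next, I apply Propositions~\ref{propGphi}, \ref{proppi}, and~\ref{propestTs} to each $(J^n,v^n,\pi^n)$. The key observation is that $\|G_0^n\|_2=\|\mu(v_0^n)'-\pi_0\|_2$ is uniformly controlled by $\|v_0'\|_2$ and $\|\pi_0\|_2$, so the time $T_*^2=\min\{1,t_*^1,t_*^2\}$ and all constants in those propositions depend only on the list in the statement of the corollary, uniformly in $n$ and independently of $\underline\varrho$. Proposition~\ref{propestTs} then gives $T_s^n>T_*^2$, so each $(J^n,v^n,\pi^n)$ exists on the common interval $[0,T_0]$ with $T_0:=T_*^2$, satisfies $\tfrac34\underline J\le J^n\le\tfrac54\bar J$ there, and all the displayed bounds hold uniformly in $n$. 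Note that the local bound on $\sqrt{\varrho_0}v^n$ on $(-R,R)$ involves only $\|\sqrt{\varrho_0}v_0^n\|_{L^2((-R,R))}$, which converges to $\|\sqrt{\varrho_0}v_0\|_{L^2((-R,R))}$ as $n\to\infty$ since $\varrho_0\in L^\infty$ and $v_0^n\to v_0$ in $L^1_{loc}$.

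It remains to pass to the limit $n\to\infty$. The uniform estimates on $(v_y^n,\sqrt{\varrho_0}v_t^n,v_{yy}^n/\sqrt{\varrho_0})$ and the local bound on $\sqrt{\varrho_0}v^n$ yield, via Aubin--Lions on bounded subintervals of $\mathbb R$, strong convergence of $v^n$ and $v_y^n$ along a subsequence in $L^2_{loc}$. The uniform two-sided bounds on $J^n$ together with the $L^\infty_t L^2_y$ estimates on $J_y^n/\sqrt{\varrho_0}$ and $J_t^n$ give strong local convergence of $J^n$ and a positive lower bound for the limit $J$. These convergences let us pass to the limit in the nonlinearities $v_y^n/J^n$ in \eqref{lcnsv} and $(v_y^n/J^n)^2$ in \eqref{lcnspi}; the bounds in the corollary are inherited by the limit via weak lower semicontinuity. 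Uniqueness of the limit $(J,v,\pi)$ in this regularity class follows from a standard difference estimate (as in Proposition~\ref{propuni}), which is available since the stated regularity implies $v\in L^1(0,T_0;\operatorname{Lip})$.

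The main obstacle I expect is the passage to the limit in the quadratic right-hand side $(v_y^n/J^n)^2$ of the pressure equation: weak convergence alone would not suffice. The resolution relies on the uniform $L^2(0,T_0;L^2_{loc})$ bound on $v_{yy}^n/\sqrt{\varrho_0}$ coming from Proposition~\ref{proppi}, which (combined with the uniform lower bound on $J^n$ and the uniform $L^\infty_t L^2_y$ control on $v_y^n$) upgrades the weak convergence of $v_y^n/J^n$ to strong $L^2(0,T_0;L^2_{loc})$ convergence by an Aubin--Lions-type argument. This is exactly what is needed to identify the limit of the quadratic term and close the argument.
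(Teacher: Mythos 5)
Your proposal is correct and follows essentially the same route as the paper: approximate $v_0$ by $H^1$ functions (the paper uses a pure cutoff $v_{0n}=v_0\phi_n$ and the growth bound $|v_0(y)|\le|v_0(0)|+\|v_0'\|_2\sqrt{|y|}$ to get $\|v_{0n}'\|_2\le 5\|v_0'\|_2+1$), invoke Propositions~\ref{prop3.1}--\ref{propestTs} to obtain a common existence time and uniform-in-$(\underline\varrho,n)$ bounds, and pass to the limit by Banach--Alaoglu, Cantor's diagonal argument, Aubin--Lions, and weak lower semicontinuity, with uniqueness from Proposition~\ref{propuni}. The one inessential inaccuracy is the claim $\|(v_0^n)'\|_2\le\|v_0'\|_2+o(1)$: with a linear cutoff $\phi(\cdot/n)$ this is not achievable when $v_0\notin L^2$ (the paper's own estimate yields the stable constant $5\|v_0'\|_2+1$), but only uniform boundedness is needed, so the argument is unaffected.
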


\begin{proof}
The H\"older inequality yields 
\begin{equation}
  \label{ADEQ1}
  |v_0(y)|=\left|v_0(0)+\int_0^yv_0'(z)dz\right|\leq|v_0(0)|+\|v_0'\|_2 \sqrt{|y|},\quad\forall y\in\mathbb R.
\end{equation}
Choose a function $0\leq\phi\in C_c^\infty((-2,2))$, with $\phi\equiv1$ on
$(-1,1)$, $0\leq\phi\leq1$ on $(-2,2)$, and $|\phi'|\leq 2$ on $\mathbb R$.
For any positive
integer $n$, we set $\phi_n(\cdot)=\phi(\frac{\cdot}{n})$ and $v_{0n}=
v_0\phi_n$. Thanks to (\ref{ADEQ1}), noticing that $supp\,\phi_n\subseteq(-2n,-n)\cup(n,2n)$, and that
$$
v_{0n}'=v_0'\phi_n+v_0\phi_n'=v_0'\phi_n+\frac{v_0}{n}\phi'\left(
\frac \cdot n\right),
$$
one has 
\begin{eqnarray}
  \|v_{0n}'\|_2&\leq&\|v_0'\|_2+\frac1n\left\|v_0\phi'\left(\frac\cdot n
  \right)\right\|_2 \nonumber\\
  &=&\|v_0'\|_2+\frac1n\left(\int_{n<|y|<2n}|v_0|^2\left|\phi'\left(\frac yn\right)\right|^2dy\right)^{\frac12}\nonumber\\
  &\leq&\|v_0'\|_2+\frac2n\left[\int_{n<|y|<2n}(|v_0(0)|+\sqrt{|y|}\|v_0'\|_2)^2
  dy\right]^{\frac12}\nonumber\\
  &\leq&\|v_0'\|_2+\frac2n(|v_0(0)|+\sqrt{2n}\|v_0'\|_2)\sqrt{2n}\nonumber\\
  &=&5\|v_0'\|_2+\frac{2\sqrt 2}{\sqrt n}|v_0(0)|\leq   5\|v_0'\|_2+1,\label{ADEQ2}
\end{eqnarray}
for any $n\geq8|v_0(0)|^2$.

Consider system (\ref{lcnsJ})--(\ref{lcnspi}), subject to the initial
condition
\begin{equation}
(J, v, \pi)|_{t=0}=(J_0, v_{0n}, \pi_0).
\label{ADEQ3}
\end{equation}
Due to (\ref{ADEQ2}), it holds that 
$$
\|G_{0n}\|_2\leq\mu\|v_{0n}'\|_2+\|\pi_0\|_2\leq \mu (5\|v_0'\|_2+1)+\|\pi_0\|_2,
$$
for any $n\geq8|v_0(0)|^2$, where $G_{0n}:=\mu v_{0n}'-\pi_0$.
Thanks to this and Propositions \ref{prop3.1}--\ref{propestTs}, there is
a positive time $T_0$ depending only on $\gamma, \mu, \bar\varrho, \underline J, \bar J, \|v_0'\|_2, \|\pi_0\|_2$, and $\|\pi_0\|_\infty$, but independent
of $\underline\varrho$, such that system (\ref{lcnsJ})--(\ref{lcnspi}),
subject to (\ref{ADEQ3}), has a unique solution $(J_n, v_n, \pi_n)$,
on $\mathbb R\times(0,T_0)$, satisfying
\begin{align*}
&\pi_n\geq0, \quad \frac34\underline J\leq J_n\leq\frac54\bar J,\quad\mbox{on }\mathbb R\times[0,T_0],\\
&\sup_{0\leq t\leq T_0}\left\|\left(\pi_n, \frac{\partial_y\pi_n}{\sqrt{\varrho_0}},J_n-J_0,\partial_tJ_n, \frac{\partial_yJ_n}{\sqrt{\varrho_0}}, \partial_yv_n\right)\right\|_2\leq C,\\
&\int_0^{T_0}
  \left(\|\partial_t\pi_n\|_2^4+\left\|\left(\sqrt{\varrho_0}
  \partial_tv_n,\frac{
  \partial_y^2v_n} {\sqrt{\varrho_0}}\right)\right\|_2^2\right)dt  \leq C, \\
&\sup_{0\leq t\leq T_0}\|\sqrt{\varrho_0}v_n\|_{L^2((-R,R))} \leq \|\sqrt{\varrho_0}v_0\|_{L^2((-R,R))}+C,
\end{align*}
for any $n\geq8|v_0(0)|^2$, and
for any $0<R\leq\infty$, where $C$ is a positive constant depending only on
$\gamma, \mu, \bar\varrho, \underline J, \bar J$, $\big\|\frac{J_0'}{\sqrt{\varrho_0}}\big\|_2$, $\|v_0'\|_2, \|\pi_0\|_2,$ and $\big\|\frac{\pi_0'}{\sqrt{\varrho_0}}\big\|_2$, but independent of $\underline\varrho$ and $n\geq8|v_0(0)|^2$.

With the above a priori estimates in hand, one can
apply the Banach-Alaoglu theorem, use Cantor's diagonal arguments,
apply the Aubin-Lions lemma, and make use of the weakly lower semi-continuity
of the norms, to show that there is a subsequence, still denoted by
$(J_n, v_n, \pi_n)$, and a triple $(J,v,\pi)$, which
satisfies the same a priori
estimates as above, such that $(J_n, v_n, \pi_n)$
converges, weakly or weak-* in appropriate spaces, to
$(J,v,\pi)$, and $(J,v,\pi)$ is a solution to system (\ref{lcnsJ})--(\ref{lcnspi}), subject to (\ref{IC}). Since the proof
is very similar to that of (i) of Theorem \ref{LOCAL}, in
the next section, we omit the details here. While the uniqueness is
guaranteed by Proposition \ref{propuni}, in the next section.
\end{proof}

As the end of this section, we give some more estimates on $G$ stated in
the next proposition, which will be the key to obtain the boundedness of
the entropy.

\begin{proposition}
\label{propGex}
In addition to the assumptions in Corollary \ref{corTextEst}, we assume that
$$
\left|\left(\frac{1}{\sqrt{\varrho_0}}\right)'(y)\right|\leq \frac{K_0}{2}, \quad \forall y\in \mathbb R,
$$
for some positive constant $K_0$. Let $T_0$ be the positive constant in Corollary \ref{corTextEst} and $(J, v, \pi)$ the solution stated in Corollary \ref{corTextEst}.

Then, for any $\delta\in(0,\infty)$, there is a positive constant $C$
depending only on
$\gamma, \mu, \bar\varrho,$$\underline J,$ $\bar J,$
$K_0,$ $\big\|\frac{J_0'}{\sqrt{\varrho_0}}\big\|_2,\|v_0'\|_2,\|\pi_0\|_2,
\big\| \frac{\pi_0'}{\sqrt{\varrho_0}}\big\|_2$, and $\left\|\varrho_0^{-\frac\delta2}G_0\right\|_2$, but independent of $\underline\varrho$, such that
\begin{eqnarray*}
\sup_{0\leq t\leq T_0}\left\|\frac{G}{\varrho_0^{\frac\delta2}}\right\|_2^2 +\int_0^{T_0} \left\|\frac{G_y}{\varrho_0^{\frac{\delta+1}{2}}}\right\|_2^2 dt\leq C.
\end{eqnarray*}
\end{proposition}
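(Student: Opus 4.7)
The plan is a weighted $L^2$ energy estimate for equation (\ref{eqG}), modeled on the proof of Proposition \ref{propGphi} but with the density weight $\varrho_0^{-\delta}$ inserted. Multiply (\ref{eqG}) by $J\varrho_0^{-\delta}G$ and integrate over $\mathbb R$. The factor $J$ is included because $J_t=v_y$ and $\varrho_0$ is time-independent, so the time derivative of the weight $J\varrho_0^{-\delta}$ is simply $v_y\varrho_0^{-\delta}$, and one obtains
\[
\int J\varrho_0^{-\delta}GG_t\,dy=\frac12\frac{d}{dt}\int J\varrho_0^{-\delta}G^2\,dy-\frac12\int v_y\varrho_0^{-\delta}G^2\,dy.
\]

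For the dissipation I would integrate by parts to get
\[
-\mu\int\varrho_0^{-\delta}G\bigl(\varrho_0^{-1}G_y\bigr)_y\,dy=\mu\int\frac{G_y^2}{\varrho_0^{\delta+1}}\,dy-\mu\delta\int\frac{\varrho_0'\,G_y\,G}{\varrho_0^{\delta+2}}\,dy.
\]
The first term is the desired weighted dissipation. The commutator term is where hypothesis (H3) is essential: $|(1/\sqrt{\varrho_0})'|\leq K_0/2$ is equivalent to $|\varrho_0'|\leq K_0\varrho_0^{3/2}$, which gives $|\varrho_0'|/\varrho_0^{\delta+2}\leq K_0\varrho_0^{-\delta-1/2}$. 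The exact algebraic splitting $\varrho_0^{-\delta-1/2}=\varrho_0^{-(\delta+1)/2}\cdot\varrho_0^{-\delta/2}$ then makes a single application of Cauchy--Schwarz and Young absorb half of the dissipation and leave a term of the form $C(\delta,K_0)\|\varrho_0^{-\delta/2}G\|_2^2$.

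For the right-hand side $-\gamma\int v_y\varrho_0^{-\delta}G^2\,dy$, together with the leftover $\tfrac12\int v_y\varrho_0^{-\delta}G^2\,dy$, I would substitute $v_y=J(G+\pi)/\mu$ from the definition of $G$, bounding both by $C(\|G\|_\infty+\|\pi\|_\infty)\|\varrho_0^{-\delta/2}G\|_2^2$. Putting the three pieces together one arrives at
\[
\frac{d}{dt}\|\sqrt J\varrho_0^{-\delta/2}G\|_2^2+\mu\left\|\frac{G_y}{\varrho_0^{(\delta+1)/2}}\right\|_2^2\leq C\bigl(1+\|G\|_\infty+\|\pi\|_\infty\bigr)\|\sqrt J\varrho_0^{-\delta/2}G\|_2^2.
\]
Gronwall closes the estimate on $[0,T_0]$ since, by Proposition \ref{propGphi} together with estimate (\ref{pi.3}) from Proposition \ref{proppi}, $\|G\|_\infty\in L^4(0,T_0)\subset L^1(0,T_0)$ and $\|\pi\|_\infty\in L^\infty(0,T_0)$ with constants independent of $\underline\varrho$, and $\tfrac34\underline J\leq J\leq\tfrac54\bar J$ on the same interval.

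The main obstacle is precisely the commutator term produced when $\varrho_0^{-\delta}$ is moved across $(\cdot)_y$: absent a structural control on $\varrho_0'$, that term cannot be absorbed by the weighted dissipation and the estimate fails. This is exactly the reason hypothesis (H3) is imposed, and the identity $\delta+1/2=(\delta+1)/2+\delta/2$ shows that the condition $|\varrho_0'|\lesssim\varrho_0^{3/2}$ is in fact the \emph{scaling-critical} bound for this argument. A minor technical point is justifying the integration by parts on $\mathbb R$ with the potentially unbounded weight $\varrho_0^{-\delta}$; this is handled by first performing the estimate on the non-vacuum approximations used in Corollary \ref{corTextEst}, where $\varrho_0\geq\underline\varrho>0$ keeps every weight bounded, and then passing to the limit using that the resulting estimate is independent of $\underline\varrho$.
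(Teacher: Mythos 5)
Your proposal is correct and follows essentially the same route as the paper: multiply (\ref{eqG}) by $J\varrho_0^{-\delta}G$, integrate by parts in the dissipation, use $J_t=v_y$ for the time-derivative term, control the commutator with $|\varrho_0'|\le K_0\varrho_0^{3/2}$ via the split $\varrho_0^{-\delta-1/2}=\varrho_0^{-(\delta+1)/2}\varrho_0^{-\delta/2}$, substitute $v_y=\tfrac{J}{\mu}(G+\pi)$ in the remaining term, and close by Gronwall using the $L^4_t L^\infty_y$ bound on $G$ and the $L^\infty$ bound on $\pi$ from Corollary~\ref{corTextEst}. The only cosmetic differences are that the paper drops the $\pi$-contribution in the $v_y$ term outright using $\pi\ge 0$ and $\tfrac12-\gamma<0$ (so only $\|G\|_\infty$ appears in the Gronwall factor), and your final remark about justifying integration by parts via approximation is unnecessary here since Proposition~\ref{propGex} is already stated under the non-vacuum hypotheses of Corollary~\ref{corTextEst}, so the weight is bounded.
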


\begin{proof}
Multiplying (\ref{eqG}) by $\frac{JG}{\varrho_0^\delta}$ and integrating
over $\mathbb R$, one gets from integration by parts that
\begin{eqnarray}
  \int_{\mathbb R}\frac{J G}{\varrho_0^\delta}G_tdy+\mu\int_{\mathbb R}\frac{G_y}{\varrho_0}\left(\frac{G}{\varrho_0^\delta}\right)_ydy
  =-\gamma\int_{\mathbb R} v_y\frac{G^2}{\varrho_0^\delta}dy. \label{G'.1}
\end{eqnarray}
Direct calculations yields
\begin{eqnarray}
\int_{\mathbb R}\frac{G_y}{\varrho_0}\left(\frac{G}{\varrho_0^\delta} \right)_y dy&=&\int_{\mathbb R}\frac{G_y}{\varrho_0}\left(\frac{G_y}{\varrho_0^\delta}
-\delta\frac{\varrho_0'} {\varrho_0}\frac{G}{\varrho_0^\delta}\right)dy\nonumber\\
&=&\left\|\frac{G_y}{\varrho_0^{\frac{\delta+1}{2}}}\right\|_2^2-\delta
\int_{\mathbb R} \frac{\varrho_0'} {\varrho_0}\frac{GG_y}{\varrho_0^{\delta+1}} dy.\label{G'.2}
\end{eqnarray}
It follows from (\ref{lcnsJ}) that
\begin{eqnarray}
  \int_{\mathbb R}\frac{J G}{\varrho_0^\delta}G_tdy
   &=& \frac12\left(\frac{d}{dt}\int_{\mathbb R}\frac{J G^2}{\varrho_0^\delta}dy-\int_{\mathbb R}J_t\frac{G^2}{\varrho_0^\delta}dy\right)\nonumber\\
   &=&\frac12\left(\frac{d}{dt}\left\|\sqrt{\frac{J} {\varrho_0^\delta}}G\right\|_2^2 -\int_{\mathbb R}v_y\frac{G^2}{\varrho_0^\delta}dy\right).\label{G'.2-1}
\end{eqnarray}
Plugging (\ref{G'.2}) and (\ref{G'.2-1}) into (\ref{G'.1}) yields
\begin{equation}
  \frac12\frac{d}{dt}\left\|\sqrt{\frac{J} {\varrho_0^\delta}}G\right\|_2^2+\mu \left\| \frac{G_y}{\varrho_0^{\frac{\delta+1}{2}}} \right\|_2^2
   = \left(\frac12-\gamma\right) \int_{\mathbb R} v_y\frac{G^2}{\varrho_0^\delta}dy+\delta\mu
\int_{\mathbb R} \frac{\varrho_0'} {\varrho_0}\frac{GG_y}{\varrho_0^{\delta+1}} dy. \label{G'.3}
\end{equation}
Due to the assumption
$\left|\left(\frac{1}{\sqrt{\varrho_0}}\right)'\right|\leq \frac{K_0}{2}$, or equivalently $|\varrho_0'|\leq K_0\varrho^{\frac32}_0$, it follows from the Cauchy inequality that
\begin{equation}
  \int_{\mathbb R} \frac{\varrho_0'} {\varrho_0}\frac{GG_y}{\varrho_0^{\delta+1}} dy \leq K_0\int_\mathbb R\left|\frac{G}{\varrho_0^{\frac\delta2}}\right|\left|\frac{G_y}{ \varrho^{\frac{\delta+1}{2}}}\right|dy\leq\frac{1}{2\delta} \left\|\frac{G_y}{ \varrho^{\frac{\delta+1}{2}}}\right\|_2^2+\frac{\delta K_0^2}{2}\left\|\frac{G}{\varrho_0^{\frac\delta2}}\right\|_2^2. \label{G'.4}
\end{equation}
Using the definition of $G$ leads to 
\begin{eqnarray}
  \left(\frac12-\gamma\right) \int_{\mathbb R} v_y\frac{G^2}{\varrho_0^\delta}dy
  &=&\left(\frac12-\gamma\right) \int_{\mathbb R} \frac{J}{\mu}(G+\pi)\frac{G^2}{\varrho_0^\delta}dy\nonumber\\
  &\leq &\frac{1-2\gamma}{2\mu}\int_\mathbb R\frac{JG^3}{\varrho_0^\delta}dy
   \leq\frac\gamma\mu\|G\|_\infty \left\|\sqrt{\frac{J}{\varrho_0^\delta}}G\right\|_2^2, \label{G'.5}
\end{eqnarray}
here, we have used the fact that $\gamma>1$ and $\pi\geq0$.
Plugging (\ref{G'.4}) and (\ref{G'.5}) into (\ref{G'.3}) yields
\begin{eqnarray*}
  \frac{d}{dt}\left\|\sqrt{\frac{J}{\varrho_0^\delta}}G\right\|_2^2 +\mu \left\|\frac{G_y}{\varrho_0^{\frac{\delta+1}{2}}} \right\|_2^2
  &\leq&C(\gamma,\mu,\delta,K_0)(1+\|G\|_\infty) \left(\left\|\sqrt{\frac{J}{ \varrho_0^\delta}}G\right\|_2^2 +1\right),
\end{eqnarray*}
from which, by Corollary \ref{corTextEst} and the Gronwall inequality,
we have
\begin{eqnarray*}
\sup_{0\leq t\leq T_0}\left\|\frac{G}{\varrho_0^{\frac\delta2}}\right\|_2^2 +\int_0^{T_0}\left\|\frac{G_y}{\varrho_0^{\frac{\delta+1}{2}}} \right\|_2^2 dt
\leq  C,
\end{eqnarray*}
for a positive constant $C$ depending only on $\gamma, \mu, \bar\varrho,$$\underline J,$ $\bar J,$
$K_0,$ $\big\|\frac{J_0'}{\sqrt{\varrho_0}}\big\|_2,$ $\|v_0'\|_2,$ $\|\pi_0\|_2$,
$\big\| \frac{\pi_0'}{\sqrt{\varrho_0}}\big\|_2$, and $\left\|\varrho_0^{-\frac\delta2}G_0\right\|_2$, but independent of $\underline\varrho$
\end{proof}

\section{Local existence in the presence of far field vacuum}
\label{seclocv}
In this section, we prove the local existence and uniqueness of strong
solutions to system (\ref{lcnsJ})--(\ref{lcnspi}), subject to
(\ref{IC}), in the presence of far field vacuum,
in other words and, thus, prove Theorem \ref{LOCAL}.

We starts with the uniqueness of the solutions.
\begin{proposition}
  \label{propuni}
Given a function $\varrho_0$ satisfying $\inf_{y\in(-R,R)}\varrho_0(y)>0$, for any $R\in(0,\infty)$, and $\varrho_0\leq\bar\varrho$ on $\mathbb R$, for a positive constant $\bar\varrho$. Let $(J_1, v_1, \pi_1)$ and $(J_2, v_2, \pi_2)$ be two solutions to system
(\ref{lcnsJ})--(\ref{lcnspi}), subject to the same initial data, on $\mathbb R\times(0,T)$, satisfying $c_0\leq J_i\leq C_0$ on $\mathbb R\times(0,T)$, for two positive numbers $c_0$ and $C_0$,
and
\begin{eqnarray*}
&\pi_i\in L^2(0,T; L^\infty),\quad
(\partial_tJ_i,\partial_t\pi_i)\in L^1_{loc}(\mathbb R\times[0,T)),\\
&(\sqrt{\varrho_0}v,\partial_yJ_i,\partial_y\pi_i)\in L^\infty(0,T; L^2),\quad \left(\sqrt{\varrho_0}\partial_tv_i,\partial_yv_i,{\partial_y^2v_i} \right)\in L^2(0,T; L^2),
\end{eqnarray*}
for $i=1,2$. Then $(J_1, v_1, \pi_1)\equiv(J_2, v_2, \pi_2)$ on $\mathbb R\times[0,T]$.
\end{proposition}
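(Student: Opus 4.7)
The strategy is a standard weighted $L^2$ energy estimate for the differences, together with Gronwall. Set $\bar J := J_1-J_2$, $\bar v := v_1-v_2$, $\bar\pi := \pi_1-\pi_2$. Subtracting the two copies of (\ref{lcnsJ})--(\ref{lcnspi}) and using the algebraic identity $\tfrac{v_{1y}}{J_1}-\tfrac{v_{2y}}{J_2}=\tfrac{\bar v_y}{J_1}-\tfrac{v_{2y}\bar J}{J_1 J_2}$, one finds
\begin{align*}
\bar J_t &= \bar v_y,\\
\varrho_0\bar v_t-\mu\Bigl(\tfrac{\bar v_y}{J_1}\Bigr)_y+\bar\pi_y &= -\mu\Bigl(\tfrac{v_{2y}\bar J}{J_1 J_2}\Bigr)_y,\\
\bar\pi_t+\gamma\tfrac{v_{1y}}{J_1}\bar\pi &= -\gamma\pi_2\Bigl(\tfrac{\bar v_y}{J_1}-\tfrac{v_{2y}\bar J}{J_1 J_2}\Bigr)+(\gamma-1)\mu\Bigl(\tfrac{v_{1y}}{J_1}+\tfrac{v_{2y}}{J_2}\Bigr)\Bigl(\tfrac{\bar v_y}{J_1}-\tfrac{v_{2y}\bar J}{J_1 J_2}\Bigr),
\end{align*}
with zero initial data. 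Note that $\bar J(t)=\int_0^t\bar v_y\,ds\in L^\infty(0,T;L^2)$ since $\bar v_y\in L^2(0,T;L^2)$, so $\|\bar J\|_2$ is a meaningful quantity.

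The plan is to multiply the three equations respectively by $\bar J$, $\bar v$, $\bar\pi$, integrate over $\mathbb R$, and integrate by parts; the boundary terms vanish because each difference lies in $H^1(\mathbb R)$. The viscous contribution produces the good dissipation $\mu\|\bar v_y/\sqrt{J_1}\|_2^2\geq (\mu/C_0)\|\bar v_y\|_2^2$, which will absorb every appearance of $\|\bar v_y\|_2$ on the right-hand side via Young's inequality. Using $c_0\leq J_i\leq C_0$ and $\varrho_0\leq\bar\varrho$, typical terms are handled as
\begin{equation*}
\Bigl|\int\pi_2\tfrac{\bar v_y}{J_1}\bar\pi\,dy\Bigr|\leq \epsilon\|\bar v_y\|_2^2+C\|\pi_2\|_\infty^2\|\bar\pi\|_2^2,\qquad \Bigl|\int \tfrac{v_{2y}\bar J}{J_1 J_2}\bar v_y\,dy\Bigr|\leq\epsilon\|\bar v_y\|_2^2+C\|v_{2y}\|_\infty^2\|\bar J\|_2^2,
\end{equation*}
and similarly for the remaining products. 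Setting $\Phi(t):=\|\sqrt{\varrho_0}\bar v\|_2^2+\|\bar J\|_2^2+\|\bar\pi\|_2^2$, summing and absorbing yields
\begin{equation*}
\frac{d}{dt}\Phi(t)+\frac{\mu}{2C_0}\|\bar v_y\|_2^2\leq \mathcal A(t)\,\Phi(t),
\end{equation*}
where $\mathcal A(t)\leq C\bigl(1+\|v_{1y}\|_\infty^2+\|v_{2y}\|_\infty^2+\|\pi_1\|_\infty^2+\|\pi_2\|_\infty^2+\|v_{1y}\|_\infty\|v_{2y}\|_\infty\bigr)$.

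It remains to verify $\mathcal A\in L^1(0,T)$. The hypothesis $\partial_y v_i,\partial_y^2 v_i\in L^2(0,T;L^2)$ gives $v_{iy}\in L^2(0,T;H^1(\mathbb R))\hookrightarrow L^2(0,T;L^\infty)$, so $\|v_{iy}\|_\infty^2\in L^1(0,T)$, and $\|\pi_i\|_\infty^2\in L^1(0,T)$ by assumption; the mixed term $\|v_{1y}\|_\infty\|v_{2y}\|_\infty$ belongs to $L^1(0,T)$ by Cauchy--Schwarz in time. Hence Gronwall's lemma together with $\Phi(0)=0$ forces $\Phi\equiv 0$, giving $(J_1,v_1,\pi_1)\equiv(J_2,v_2,\pi_2)$.

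The main obstacle is the bookkeeping of the cross terms in the $\bar\pi$ equation, specifically the quartic product $\int(v_{1y}/J_1+v_{2y}/J_2)(v_{2y}\bar J/(J_1J_2))\bar\pi\,dy$: to keep $\mathcal A$ merely in $L^1(0,T)$ (rather than $L^{1/2}$, which would be useless for Gronwall), one must split it as $\|v_{iy}\|_\infty\|v_{2y}\|_\infty(\|\bar J\|_2^2+\|\bar\pi\|_2^2)$ using Young's inequality with equal exponents, rather than squaring the coefficient. All the other estimates are routine consequences of $c_0\leq J_i\leq C_0$ and the listed regularity of $(J_i,v_i,\pi_i)$.
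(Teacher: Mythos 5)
Your overall strategy — write difference equations, form a weighted $L^2$ energy $\Phi=\|\sqrt{\varrho_0}\bar v\|_2^2+\|\bar J\|_2^2+\|\bar\pi\|_2^2$, absorb the dissipation, and close with Gronwall — coincides with the paper's. The coefficients you need in $L^1(0,T)$ are the same $\alpha,\beta,\chi$-type quantities, and your verification that they are in $L^1(0,T)$ is correct.

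However, there is a genuine gap in your justification of the integration by parts: you assert that ``the boundary terms vanish because each difference lies in $H^1(\mathbb R)$,'' but $\bar v$ is \emph{not} in $L^2(\mathbb R)$ under the stated hypotheses. Only $\sqrt{\varrho_0}\,v_i\in L^\infty(0,T;L^2)$ is assumed, and since $\varrho_0$ may decay to zero at infinity (this is the far-field vacuum situation), this in no way implies $v_i\in L^2$. Indeed, $v_{iy}\in L^\infty(0,T;L^2)$ forces only the sublinear growth bound $|v_i(y,t)|\le C(1+\sqrt{|y|})$, and $v_i$ may genuinely be unbounded. Consequently, the boundary contributions arising from $\int\bigl(\bar\pi+\mu v_{2y}\bar J/(J_1J_2)-\mu\bar v_y/J_1\bigr)_y\,\bar v\,dy$ are not a priori zero, and your identity $\tfrac{d}{dt}\Phi+\tfrac{\mu}{2C_0}\|\bar v_y\|_2^2\le\mathcal A\,\Phi$ is unjustified as stated.

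The paper deals with precisely this problem by inserting a cutoff $\eta_r$, carrying the error term
\[
Q_r:=C\int_0^T\!\!\int_{\mathbb R}\bigl(|\bar\pi|+|\alpha|\,|\bar J|+|\bar v_y|\bigr)\,|\bar v|\,|\eta_r'|\,dy\,dt
\]
through the Gronwall argument, and then showing $Q_r\to 0$ as $r\to\infty$. The vanishing uses exactly the growth bound $|\bar v(y,t)|\le C\sqrt{|y|}$ (obtained from $\sqrt{\varrho_0}\bar v\in L^2$ on a compact set where $\varrho_0$ is bounded below, together with $\bar v_y\in L^2$), which balances $|\eta_r'|\lesssim 1/r$ on $\{r<|y|<2r\}$; one also needs $\bar\pi,\alpha,\bar v_y\in L^2(\mathbb R\times(0,T))$ to make the tail integral go to zero. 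This cutoff-plus-growth-estimate step is unavoidable here and is the actual technical content of the uniqueness proof; without it, your argument does not close.
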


\begin{proof}
Define $(J, v, \pi)$ as 
$$
J=J_1-J_2,\quad v=v_1-v_2, \quad \pi=\pi_1-\pi_2.
$$
Then, $(J, v, \pi)$ satisfies
\begin{eqnarray}
&&\partial_tJ=\partial_yv,\label{D1}\\
&&\varrho_0\partial_t v-\mu\partial_y\left(\frac{\partial_y v}{J_1}\right)+\partial_y\left(\pi+\frac{\alpha}{J_1} {J} \right)=0,\label{D2}\\
&&\partial_t\pi+\gamma\beta\pi
=\chi\left( {\partial_y v} -\alpha {J} \right),\label{D3}
\end{eqnarray}
where $\alpha=\alpha(y,t), \beta=\beta(y,t),$ and $\chi=\chi(y,t)$ are given functions as follows
$$
\alpha(y,t)=\frac{\partial_y v_2}{J_2},\quad\beta(y,t)=\frac{\partial_y v_1}{J_1},\quad\chi(y,t)=\left[(\gamma-1)\mu\left(\frac{\partial_y v_1}{J_1^2}+\frac{\partial_y v_2}{J_1J_2}\right)-\gamma\frac{\pi_2}{J_1}\right].
$$
Due to the regularities of $(J_i, v_i, \pi_i), i=1,2,$, it holds that
\begin{equation}\label{D4}
(\alpha,\beta)\in L^2(0,T; L^2),\quad (\alpha, \beta, \chi)\in L^2(0,T;L^\infty).
\end{equation}

Choose a function $\eta\in C_c^\infty((-2,2))$, with $\eta\equiv1$ on $(-1,1)$, and $0\leq\eta\leq1$ on $(-2,2)$. For each $r\geq1$, we set $\eta_r(y)=\eta(\frac yr)$, for $y\in\mathbb R$.
Multiplying equations (\ref{D1}), (\ref{D2}), and (\ref{D3}), respectively,
by $J\eta_r^2$, $v\eta_r^2$, and $\pi\eta_r^2$, summing the resultants up,
and integrating over $\mathbb R$, one gets from integration by parts that
\begin{eqnarray*}
  &&\frac12\frac{d}{dt}\int_\mathbb R(J^2+\varrho_0v^2+\pi^2)\eta_r^2dy
  +\mu\int_\mathbb R\frac{(\partial_yv)^2}{J_1}\eta_r^2dy\nonumber \\
  &=&\int_\mathbb R\left[\partial_yvJ+\left(\pi+\frac{\alpha}{J_1}J\right) \partial_yv+\chi(\partial_yv-\alpha J)\pi-\gamma\beta\pi^2\right]\eta_r^2 dy\nonumber\\
  &&+2\int_\mathbb R\left(\pi+\frac{\alpha}{J_1}J-\mu\frac{\partial_yv}{J_1}\right) v\eta_r\eta_r'dy.
\end{eqnarray*}
Note that 
\begin{eqnarray*}
 &&\int_\mathbb R\left[\partial_yvJ+\left(\pi+\frac{\alpha}{J_1}J\right) \partial_yv+\chi(\partial_yv-\alpha J)\pi-\gamma\beta\pi^2\right]\eta_r^2 dy\\
 &\leq&\frac\mu2\int_\mathbb R\frac{(v_y)^2}{J_1}\eta_r^2dy +C\int_\mathbb R
 (J^2+\pi^2+\alpha^2 J^2+\chi^2\pi^2+|\beta|\pi^2)\eta_r^2dy \\
 &\leq&\frac\mu2\int_\mathbb R\frac{(v_y)^2}{J_1}\eta_r^2dy +C(1+\|(\alpha,\beta,\chi)\|_\infty^2)\int_\mathbb R
 (J^2+\pi^2)\eta_r^2dy,
\end{eqnarray*}
so 
\begin{eqnarray*}
  \frac{d}{dt}\int_\mathbb R(J^2+\varrho_0v^2+\pi^2)\eta_r^2dy
  &\leq& C(1+\|(\alpha,\beta,\chi)\|_\infty^2)\int_\mathbb R
 (J^2+\pi^2)\eta_r^2dy\\
 &&+C\int_\mathbb R(|\pi|+|\alpha||J|+|\partial_yv|)|v||\eta_r'|dy.
\end{eqnarray*}

It follows from this, the Gronwall inequality, and (\ref{D4}) that 
\begin{eqnarray*}
  \sup_{0\leq t\leq T}\int_\mathbb R(J^2+\varrho_0v^2+\pi^2)\eta_r^2dy
  \leq C\int_0^T\int_\mathbb R(|\pi|+|\alpha||J|+|\partial_yv|)|v||\eta_r'|dy dt=:Q_r.
\end{eqnarray*}
In order to prove the conclusion, it suffices to show that $Q_r$ tends to zero as $r\rightarrow\infty$.

Note that
$$
v(y,t)=v(z,t)+\int_z^y\partial_yv(y',t)dy',\quad 0<z<1<y<\infty. 
$$
Integrating the above identity with respect to $z$ over the interval $(0,1)$,
and denoting 
$D:=\sup_{y\in(-1,1)}\frac{1}{\varrho_0(y)}$, one obtains by the
H\"older inequality that
\begin{eqnarray*}
  |v(y,t)|&=&\left|\int_0^1v(z,t)dz+\int_0^1\int_z^y\partial_yv(y',t) dy'dz\right|\\
  &\leq&\int_0^1|v|dz+\int_0^y|\partial_yv|dy'
  \leq\frac{1}{\sqrt D}\int_0^1|\sqrt{\varrho_0}v|dz+\int_0^y|\partial_yv |dy'\\
  &\leq&\frac{1}{\sqrt{D}}\|\sqrt{\varrho_0}v\|_2+\sqrt y\|\partial_yv\|_2
  \leq C\sqrt y,\quad\forall y\geq1.
\end{eqnarray*}
In the same way, one has $|v(y,t)|\leq C\sqrt{|y|}$, for any $y\leq-1$ and, thus,
\begin{equation}\label{DD1}
  |v(y,t)|\leq C\sqrt{|y|},\quad \forall y\in(-\infty,-1]\cup[1,\infty).
\end{equation}

Due to (\ref{D1}), one has 
$$
\sup_{0\leq t\leq T}\|J\|_2\leq\int_0^T\|\partial_yv\|_2d\tau <\infty,
$$
thanks to which, by solving (\ref{D3}) as
$$
\pi=e^{-\gamma\int_0^t\beta d\tau}\int_0^te^{\gamma\int_0^\tau\beta d\tau'} \chi(\partial_yv-\alpha J)d\tau,
$$
and recalling (\ref{D4}), one obtains from the Cauchy inequality that
\begin{eqnarray}
  \sup_{0\leq t\leq T}\|\pi\|_2&\leq&e^{\gamma\int_0^T\|\beta\|_\infty dt} \int_0^T\|\chi\|_\infty(\|\partial_yv\|_2+\|\alpha\|_\infty\|J\|_2
  ) dt\nonumber\\
  &\leq&C\int_0^T(\|\chi\|_\infty^2 +\|\partial_yv\|_2^2+\|\alpha\|_\infty^2+1)dt<\infty. \label{DD2}
\end{eqnarray}

Thanks to (\ref{DD1}) and (\ref{DD2}), noticing that $supp\,\eta_r'\subseteq
(-2r,-r)\cup(r,2r)$, and $|\eta_r'|\leq\frac Cr$, one obtains by the H\"older
inequality that
\begin{eqnarray*}
  Q_r&=&C\int_0^T\int_\mathbb R(|\pi|+|\alpha||J|+|\partial_yv|)|v||\eta_r'|dy dt\\
 &=&\int_0^T\int_{r<|y|<2r}(|\pi|+|\alpha||J|+|\partial_yv|)|v||\eta_r'|dydt\\
 &\leq&\frac{C}{\sqrt{r}}\int_0^T\int_{r<|y|<2r}(|\pi|+|\alpha|+|\partial_yv|)dydt\\
 &\leq&C\left(\int_0^T\int_{r<|y|<2r}(|\pi|^2+|\alpha|^2+|\partial_y v|^2)dydt\right)^{\frac12} ,
\end{eqnarray*}
for any $r\geq1$, which, together with the fact that 
$\pi, \alpha, \partial_yv\in L^2(\mathbb R\times(0,T))$, implies
$$
Q_r\rightarrow0,\quad\mbox{as }r\rightarrow\infty.
$$
The proof is complete.
\end{proof}

We are now ready to give the proof of Theorem \ref{LOCAL}.

\begin{proof}[\textbf{Proof of Theorem \ref{LOCAL}.}]
(i) Since the uniqueness is a direct corollary of Proposition \ref{propuni}, it remains to prove the existence.
For any positive number
$\varepsilon\in(0,1)$, set
$\varrho_{0\varepsilon}(y)=\varrho_0(y)+\varepsilon$, for $y\in\mathbb R$.
It is clear that $\varepsilon\leq\varrho_{0\varepsilon}(y)\leq\bar\varrho+1$, for $y\in\mathbb R$.
Consider the following approximate system of (\ref{lcnsJ})--(\ref{lcnspi}):
\begin{equation}
\left\{
\begin{array}{l}
J_t=v_y,\\
\varrho_{0\varepsilon}v_t-\mu\left(\frac{v_y}{J}\right)_y+\pi_y=0,\\
\pi_t+\gamma \frac{v_y}{J}\pi=(\gamma-1)\mu\left(\frac{v_y}{J}\right)^2.
\end{array}
\right.
\label{lcns}
\end{equation}

By Corollary \ref{corTextEst}, there is a positive constant $T$ depending only on $\gamma,\mu, \bar\varrho,\underline J,\bar J,\|v_0'\|_2$, $\|\pi_0\|_2$, and
$\|\pi_0\|_\infty$, but independent of $\varepsilon$, such that system (\ref{lcns}), subject to (\ref{IC}), has a unique solution $(J_\varepsilon, v_\varepsilon, \pi_\varepsilon)$, satisfying
\begin{eqnarray}
&&\pi_\varepsilon\geq0, \quad \frac34\underline J\leq J_\varepsilon \leq\frac54\bar J,\quad\mbox{on }\mathbb R\times[0,T],\label{AP0}\\
&&\sup_{0\leq t\leq T} \left\|\left(J_\varepsilon-J_0,
\frac{\partial_yJ_\varepsilon} {\sqrt{\varrho_{0\varepsilon}}}, \partial_tJ_\varepsilon,\partial_yv_\varepsilon, \pi_\varepsilon,\frac{\partial_y\pi_\varepsilon}{\sqrt{ \varrho_{0\varepsilon}}}\right) \right\|_2^2 \leq C,\label{AP1}\\
&&\sup_{0\leq t\leq T}\|\sqrt{\varrho_{0\varepsilon}}v_\varepsilon\|_{L^2((-R,R))} \leq \|\sqrt{\varrho_{0\varepsilon}}v_0\|_{L^2((-R,R))}+C,\label{AP2}
\end{eqnarray}
for any $0<R<\infty$, and
\begin{equation}\label{AP3}
\int_0^{T} \left(\left\|\left(\frac{ \partial_y^2 v_\varepsilon}{\sqrt{\varrho_{0\varepsilon}}}, \sqrt{\varrho_{0\varepsilon}}\partial_tv_\varepsilon, \right)\right\|_2^2 +\|\partial_t\pi_\varepsilon\|_2^4 \right)dt\leq C,
\end{equation}
for a positive constant $C$ independent of $\varepsilon$.

Due to the a priori estimates (\ref{AP1})--(\ref{AP3}), by the Banach-Alaoglu theorem, and using Cantor's diagonal argument, there is a subsequence, still denoted by $(J_\varepsilon, v_\varepsilon, \pi_\varepsilon)$, and a triple $(J, v, \pi)$, such that
\begin{eqnarray}
J_\varepsilon-J_0\rightarrow J-J_0,&&\mbox{weak-* in }L^\infty(0,T; H^1),\label{APP6}\\
\partial_tJ_\varepsilon\rightarrow J_t,&& \mbox{weak-* in }L^\infty(0,T; L^2), \label{APP7}\\
v_\varepsilon\rightarrow v,&&\mbox{weakly in }L^2(0,T; H^2((-R,R))),\label{APP8}\\
\partial_tv_\varepsilon\rightarrow v_t,&& \mbox{weakly in }L^2(0,T; L^2((-R,R))), \label{APP9}\\
\partial_yv_\varepsilon\rightarrow v_y,&& \mbox{weak-* in }L^\infty(0,T; L^2)\mbox{ and weakly in }L^2(0,T; H^1),\label{APP10}\\
\pi_\varepsilon\rightarrow\pi,&& \mbox{weak-* in }L^\infty(0,T; H^1),\label{APP11}\\
\partial_t\pi_\varepsilon\rightarrow \pi_t,&&\mbox{weakly in }L^4(0,T;L^2),\label{APP12}
\end{eqnarray}
for any $R\in(0,\infty)$, and
\begin{eqnarray}
&&J-J_0\in L^\infty(0,T; H^1),\quad J_t\in L^\infty(0,T; L^2), \label{APP13}\\
&&v_y\in L^\infty(0,T; L^2)\cap L^2(0,T;H^1),\label{APP14} \\
&&\pi\in L^\infty(0,T; H^1),\quad \pi_t\in L^4(0,T; L^2).\label{APP15}
\end{eqnarray}

It remains to prove that $(J,v,\pi)$ is a strong solution to system
(\ref{lcnsJ})--(\ref{lcnspi}), subject to (\ref{IC}), on $\mathbb
R\times(0,T)$. One can verify that $(J,v,\pi)$ has the
regularities stated in Definition \ref{defloc}. Other desired
regularities of
$(J, v, \pi)$, beyond those in (\ref{APP13})--(\ref{APP15}), are
verified as follows. First, thanks to (\ref{APP13}), (\ref{APP15}),
and
$$
Y:=\left\{f|f\in L^\infty(0,T;L^2), f'\in L^1(0,T; L^2)\right\}\hookrightarrow C([0,T]; L^2),
$$
it is clear that $(J-J_0,\pi)\in C([0,T]; L^2).$
Next, noticing that
$\sqrt{\varrho_{0\varepsilon}}\rightarrow
\sqrt{\varrho_0}$ and
$\frac{1}{\sqrt{\varrho_{0\varepsilon}}}\rightarrow
\frac{1}{\sqrt{\varrho_0}}$, in $L^\infty((-R,R))$, for any $R\in(0,\infty)$, one gets from (\ref{APP6}), (\ref{APP8})--(\ref{APP9}), and (\ref{APP11}) that
\begin{eqnarray}
\left(\frac{\partial_yJ_\varepsilon}{\sqrt{\varrho_{0\varepsilon}}},
\frac{\partial_y\pi_\varepsilon}{\sqrt{\varrho_{0\varepsilon}}}\right) \rightarrow\left(\frac{J_y}{\sqrt{\varrho_0}}, \frac{\pi_y}{\sqrt{\varrho_0}}\right),&& \mbox{weak-* in }L^\infty(0,T; L^2((-R,R))),\nonumber\\
\left(\frac{\partial_y^2v_\varepsilon}{\sqrt{\varrho_{0\varepsilon}}},
\sqrt{\varrho_{0\varepsilon}}\partial_tv_\varepsilon\right)
\rightarrow\left(\frac{v_{yy}}{\sqrt{\varrho_0}}, \sqrt{\varrho_0}\partial_tv\right),&& \mbox{weakly in }L^2((-R,R)\times(0,T)),\nonumber
\end{eqnarray}
for any $R\in(0,\infty)$.
Consequently, it follows from the weakly lower semi-continuity of the norms,
(\ref{AP1}), and (\ref{AP3}) that 
\begin{eqnarray*}
  &\left\|\left(\frac{J_y}{\sqrt{\varrho_0}},
  \frac{\pi_y}{\sqrt{\varrho_0}}\right)\right\|_{
  L^\infty(0,T;L^2((-R,R)))} \leq{\displaystyle\varliminf_{\varepsilon\rightarrow0}}
  \left\|\left(\frac{\partial_yJ_\varepsilon} {\sqrt{\varrho_{0\varepsilon}}},
  \frac{\partial_y\pi_\varepsilon}{\sqrt{\varrho_{0\varepsilon}}}\right) \right\|_{L^\infty(0,T;L^2((-R,R)))}\leq C,\\
  &\left\|\left(\frac{v_{yy}}{\sqrt{\varrho_0}}, \sqrt{\varrho_0}\partial_tv\right)\right\|_{L^2(0,T;L^2((-R,R)))}
  \leq{\displaystyle\varliminf_{\varepsilon\rightarrow0}}
  \left\|\left(\frac{\partial_y^2v_\varepsilon}{ \sqrt{\varrho_{0\varepsilon}}},
\sqrt{\varrho_{0\varepsilon}}\partial_tv_\varepsilon\right) \right\|_{L^2(0,T;L^2((-R,R)))}\leq C,
\end{eqnarray*}
for a positive constant $C$ independent of $R$. Therefore,  
$$
\left(\frac{J_y}{\sqrt{\varrho_0}},
\frac{\pi_y}{\sqrt{\varrho_0}}\right)\in L^\infty(0,T; L^2),\quad
   \left(\sqrt{\varrho_0} v_t,\frac{v_{yy}}{\sqrt{\varrho_0}}\right)\in L^2(0,T; L^2).
$$
And finally, since $\sqrt{\varrho_0}v_t\in L^2(0,T; L^2)$,
then $\sqrt{\varrho_0}v\in C([0,T]; L^2)$. Combining all the regularities obtained in the above, we can see that $(J,v,\pi)$ meet the
required regularities in Definition \ref{defloc}.

Next, we show that $\pi\geq0$, $J$ has a uniform positive lower bound on $\mathbb R\times(0,T)$, and that $(J,v,\pi)$ fulfills the
initial condition (\ref{IC}). Thanks to (\ref{APP6})--(\ref{APP9}),  (\ref{APP11})--(\ref{APP12}), the Aubin-Lions compactness lemma, and Cantor's diagonal argument again, there is a subsequence, still denoted by $(J_\varepsilon, v_\varepsilon, \pi_\varepsilon)$, such that
\begin{eqnarray}
&J_\varepsilon\rightarrow J,\quad \mbox{in }C([0,T]; L^2((-R,R))),\label{APP20} \\
&v_\varepsilon\rightarrow v,\quad\mbox{in }C([0,T];L^2((-R,R)))\cap L^2(0,T;H^1((-R,R))),\label{APP21} \\
&\pi_\varepsilon\rightarrow\pi,\quad\mbox{in }C([0,T];L^2((-R,R))),\label{APP22}
\end{eqnarray}
for any $R\in(0,\infty)$. It follows from (\ref{APP20}), (\ref{APP22}), and (\ref{AP0}) that $\pi\geq0$ and $\frac34\underline J
\leq J\leq\frac54\bar J$
on $\mathbb R\times[0,T]$. Moreover, (\ref{APP20})--(\ref{APP22}) guarantees that $(J,v,\pi)$ fulfills the initial condition (\ref{IC}).

And finally, we prove that $(J,v,\pi)$ satisfies equations (\ref{lcnsJ})--(\ref{lcnspi}). Thanks to (\ref{APP6})--(\ref{APP12}) and (\ref{APP20})--(\ref{APP22}), by taking
$\varepsilon\rightarrow0^+$ to system (\ref{lcns}), one can see that
$(J,v,\pi)$ satisfies equations (\ref{lcnsJ})--(\ref{lcnspi}).
Therefore, $(J,v,\pi)$ is a
strong solution to system (\ref{lcnsJ})--(\ref{lcnspi}),
subject to (\ref{IC}), on $\mathbb R\times(0,T)$, which proves (i).

(ii) We first prove the regularities of $G$, i.e., (\ref{ADR1}).
Let $\varrho_{0\varepsilon}, (v_\varepsilon, J_\varepsilon, \pi_\varepsilon),$ and $T$ be the same as in (i). Then,
$\frac34\underline J
\leq J_\varepsilon\leq\frac54\bar J$ on $\mathbb R\times(0,T)$,
and (\ref{APP6})--(\ref{APP12}) and (\ref{APP20})--(\ref{APP22})
hold. It is clear that
\begin{eqnarray*}
\left|\left(\frac{1}{\sqrt{\varrho_{0\varepsilon}}}\right)'\right|
=\frac12\left|\frac{\varrho_{0\varepsilon}'} {\varrho_{0\varepsilon}^{\frac32}}\right|
=\frac12\left|\frac{\varrho_{0}'} {\varrho_{0\varepsilon}^{\frac32}}\right|
\leq\frac12\left|\frac{\varrho_{0}'} {\varrho_{0}^{\frac32}}\right| =\left|\left(\frac{1}{\sqrt{\varrho_{0}}}\right)'\right|\leq\frac{K_0}{2}, \quad\forall y\in\mathbb R.
\end{eqnarray*}
Therefore, one can apply Proposition \ref{propGex} to get
\begin{eqnarray}
\sup_{0\leq t\leq T}\left\|\frac{G_\varepsilon} {\varrho_{0\varepsilon}^{\frac\delta2}}\right\|_2^2 +\int_0^{T} \left\|\frac{\partial_yG_\varepsilon} {\varrho_{0\varepsilon}^{\frac{\delta+1}{2}}}\right\|_2^2 dt\leq C, \label{APP31}
\end{eqnarray}
for a positive constant $C$ independent of $\varepsilon$, where $G_\varepsilon:=\mu\frac{\partial_yv_\varepsilon} {J_\varepsilon}-\pi_\varepsilon$.

Recalling $\frac34\underline J\leq J_\varepsilon\leq\frac54\bar J$,
and using (\ref{APP6}), (\ref{APP10})--(\ref{APP11}), and
(\ref{APP20})--(\ref{APP21}), one can show that
\begin{eqnarray*}
&&G_\varepsilon\rightarrow G,\quad\mbox{weak-* in }L^\infty(0,T; L^2((-R,R))),\\
&&\partial_yG_\varepsilon\rightarrow G_y,\quad\mbox{weakly in }L^2(0,T; L^2((-R,R))),
\end{eqnarray*}
for any $R\in(0,\infty)$.
Therefore, noticing that $\frac{1}{\varrho_{0\varepsilon}}\rightarrow
\frac{1}{{\varrho_0}}$ in $L^\infty((-R,R))$, for any $R\in(0,\infty)$, it is easily to verify that
\begin{eqnarray}
&&\frac{G_\varepsilon}{\varrho_{0\varepsilon}^{\frac\delta2}}
\rightarrow \frac{G}{\varrho_0^{\frac\delta2}},\quad\mbox{weak-* in }L^\infty(0,T; L^2((-R,R))),\label{APP32}\\
&&\frac{\partial_yG_\varepsilon}{\varrho_{0 \varepsilon}^{\frac{\delta+1}{2}}}
\rightarrow \frac{G_y}{\varrho_0^{\frac{\delta+1}{2}}},\quad\mbox{weakly in }L^2(0,T; L^2((-R,R))),\label{APP33}
\end{eqnarray}
for any $R\in(0,\infty)$.

Due to (\ref{APP32}), (\ref{APP33}), and the weakly lower semi-continuity of the norms, it follows from (\ref{APP31}) that
$\frac{G}{\varrho_0^{\frac\delta2}}$ and $\frac{G_y}{\varrho_0^{\frac{\delta+1}{2}}}$, respectively, are bounded in $L^\infty(0,T; L^2((-r,r)))$ and $L^2(0,T;L^2((-r,r)))$, uniformly in
$r\in(0,\infty)$, and, consequently, it holds that 
$$
\frac{G}{\varrho_0^{\frac\delta2}}\in L^\infty(0,T; L^2),\quad
\frac{G_y}{\varrho_0^{\frac{\delta+1}{2}}}\in L^2(0,T;L^2).
$$
Thanks to these regularities of $G$, it follows from the
Gagliardo-Nirenber inequality $\|f\|_{L^\infty(\mathbb R)}\leq
C\|f\|_{L^2(\mathbb R)}^{\frac12}\|f'\|_{L^2(\mathbb R)}^{\frac12}$, for
$f\in H^1(\mathbb R)$, and the assumption
$\big|\big(\tfrac{1}{\sqrt{\varrho_{0}}}\big)'\big|\leq\frac{K_0}{2}$,
or equivalently
$|\varrho_0'|\leq K_0\rho^{\frac32}_0$, that
 \begin{eqnarray*}
   \int_0^{T}\left\|\frac{G}{\varrho_0^{\frac\delta2}}\right\|_\infty^4
   dt&\leq
   &C\int_0^{T}\left\|\frac{G}{\varrho_0^{\frac\delta2}}\right\|_2^2
   \left\|\left(\frac{G}{\varrho_0^{\frac\delta2}}\right)_y\right\|_2^2dt\\
   &=&C\int_0^{T_0}\left\|\frac{G}{\varrho_0^{\frac\delta2}}\right\|_2^2
   \left\|\frac{G_y}{\varrho_0^{\frac\delta2}}
   -\frac\delta2\frac{\varrho_0'}{\varrho_0}\frac{G}
   {\varrho_0^{\frac\delta2} }\right\|_2^2dt\\
   &\leq&C\int_0^{T}\left\|\frac{G}{\varrho_0^{\frac\delta2}}\right\|_2^2
   \left(\left\|\frac{G_y}{\varrho_0^{\frac\delta2}}\right\|_2^2
   +\delta^2K_0^2\left\|\frac{G}
   {\varrho_0^{\frac{\delta-1}{2}}}\right\|_2^2\right)dt\\
   &\leq&C\int_0^{T}\left\|\frac{G}{\varrho_0^{\frac\delta2}}\right\|_2^2
   \left(\left\|\frac{G_y}{\varrho_0^{\frac{\delta+1}{2}}}\right\|_2^2
   +\left\|\frac{G}
   {\varrho_0^{\frac{\delta}{2}}}\right\|_2^2\right)dt<\infty
 \end{eqnarray*}
 and, thus, $\frac{G}{\varrho_0^{\frac\delta2}}\in L^4(0,T; L^\infty)$.

Next, we show the regularity that $v\in L^\infty(0,T; H^1)$, under the assumption that in this case
$\delta\geq1$ and $v_0\in H^1$. Noticing that
$\frac{G_y}{\varrho_0}\in L^2(0,T; L^2)$
and $\varrho_0v_t=G_y$, it is straightforward
that $v_t\in L^2(0,T; L^2)$ and,
consequently, recalling $v_0\in L^2$,
one obtains $v\in L^\infty(0,T; L^2)$, and further
$v\in L^\infty(0,T; H^1).$

We verify the regularity $\vartheta\in L^\infty(0,T; H^1)$, under the assumption that $\delta\geq1$ and $\frac{\pi_0}{\varrho_0}\in H^1$, as follows. In this case, one has 
$$
\frac{G}{\sqrt{\varrho_0}}\in L^\infty(0,T; L^2)\cap L^4(0,T; L^\infty),\quad
\frac{G_y}{\varrho_0}\in L^2(0,T; L^2)
$$
and, by the assumption $\big|\big(\tfrac{1}{\sqrt{\varrho_{0}}}\big)'\big|\leq\frac{K_0}{2}$, or equivalently $|\varrho_0'|\leq K_0\varrho_0^{\frac32}$, one can verify that $\frac{\pi_0'}{\varrho_0}\in L^2$.

Recalling that
\begin{equation}
  \pi_t+\frac1\mu\left(\pi+\frac{2-\gamma}{2} G \right)^2= \frac{\gamma^2}{4\mu} G ^2, \label{PI.1}
\end{equation}
one obtains
\begin{equation*}
\sup_{0\leq t\leq T}\left\|\frac{\pi}{\varrho_0}\right\|_2 \leq  \left\|\frac{\pi_0}{\varrho_0}\right\|_2+
\frac{\gamma^2}{4\mu}\int_0^{T}\left\| \frac{G}{\sqrt{\varrho_0}}\right\|_\infty\left\| \frac{G}{\sqrt{\varrho_0}}\right\|_2dt<\infty
 \label{PI.2}
\end{equation*}
and, thus, $\frac{\pi}{\varrho_0}\in L^\infty(0,T; L^2)$. Therefore, recalling $J\in L^\infty(0,T; L^\infty)$, we have
$$
\vartheta:=\frac{\pi}{R\varrho}=\frac{J\pi}{R\varrho_0}
\in L^\infty(0,T;L^2),\quad\mbox{where }\varrho:=\frac{\varrho_0}{J}.
$$

Differentiating (\ref{PI.1}) in $y$, multiplying
the resultant by $\frac{\pi_y}{\varrho_0^2}$, and integrating over $\mathbb
R$, one gets from the H\"older and Cauchy inequalities, and (\ref{pi.3}) that
\begin{eqnarray*}
\frac{d}{dt}\left\|\frac{\pi_y}{ {\varrho_0}} \right\|_2^2&\leq&C(\gamma,\mu)(\|\pi\|_\infty+\|G\|_\infty)
\left(\left\|\frac{\pi_y}{ {\varrho_0}}\right\|_2+\left\|\frac{G_y} { {\varrho_0}}\right\|_2\right) \left\|\frac{\pi_y}{ {\varrho_0}}\right\|_2\\
&\leq&C(\gamma,\mu)\left(\left\|\frac{G_y} { {\varrho_0}}\right\|_2^2+
(1+\|\pi\|_\infty^2+\|G\|_\infty^2) \left\|\frac{\pi_y}{ {\varrho_0}}\right\|_2^2\right),
\end{eqnarray*}
from which, by the Gronwall inequality, we have
$\displaystyle\sup_{0\leq t\leq T}\big\|\tfrac{\pi_y}{\sqrt{\varrho_0}}\big\|_2^2<\infty$ and, thus,
$\frac{\pi_y}{\varrho_0}\in L^\infty(0,T;L^2)$.

Rewrite equation (\ref{lcnsJ}) in terms of $G$ as $
J_t=\frac{J}{\pi}(G+\pi),$
from which, one obtains $
  J(y,t)=e^{\frac1\mu\int_0^t(G+\pi)d\tau}J_0(y)$
and, thus,
\begin{equation*}
  J_y=\left(\frac1\mu\int_0^t(G_y+\pi_y) dJ_0+J_0'\right)\tau\exp\left\{\frac1\mu\int_0^t(G+\pi)d\tau\right\}.
\end{equation*}
Hence,  
\begin{equation*}
  \sup_{0\leq t\leq T}\left\|\frac{J_y}{ {\varrho_0}}\right\|_2
  \leq \left(\frac{\bar J}{\mu} \int_0^{T_*^1}\left\|\left(\frac{G_y}{ {\varrho_0}} ,\frac{\pi_y}{ {\varrho_0}}\right) \right\|_2 dt+\left\|
  \frac{J_0'}{{\varrho_0}}\right\|_2\right) e^{\frac1\mu\int_0^{T}\|(G,\pi)\|_\infty
  dt}<\infty,
\end{equation*}
that is $\frac{J_y}{\varrho_0}\in L^\infty(0,T; L^2)$.

Thanks to the regularities $\left(\frac{\pi}{\varrho_0},\frac{\pi_y}{\varrho_0},\frac{J_y} {\varrho_0}\right)\in L^\infty(0,T;L^2)$,
noticing that $(J,\pi)\in L^\infty(0,T;L^\infty)$,
and recalling
the assumption $\big|\big(\tfrac{1}{\sqrt{\varrho_{0}}}\big)'\big|\leq\frac{K_0}{2}$, or equivalently $|\varrho_0'|\leq K_0\varrho_0^{\frac32}$, we have
$$
\vartheta_y=\frac1R\left(\frac{J\pi_y}{\varrho_0}+\frac{J_y\pi}{\varrho_0} -\frac{\varrho_0'J\pi}{\varrho_0^2}\right)\in L^\infty(0,T; L^2).
$$

It remains to prove that $s\in L^\infty(0,T; L^\infty)$, under the assumption that $s_0\in L^\infty$ and $\delta\geq\gamma$.
To this end, by the definition of $s$,
it suffice to verify that $\frac{\pi}{\varrho^\gamma}=\frac{J^\gamma\pi}{\varrho_0^\gamma}$ has
uniform positive lower and upper bounds on $\mathbb R\times(0,T)$.
Since $\delta\geq\gamma$,
it follows from (\ref{ADR1}) that $\frac{G}{\varrho_0^{\frac\gamma2}}\in L^4(0,T; L^\infty)$. To show the boundedness from the
above of $\frac{J^\gamma\pi}{\varrho_0^\gamma}$, recalling that
$J\in[\frac34\underline J,\frac54\bar J]$ on $\mathbb R\times[0,T]$, we need only to
verify that of $\frac{\pi}{\varrho_0^\gamma}$.  
(\ref{PI.1}) implies that 
\begin{eqnarray*}
  \sup_{0\leq t\leq T}\left\|\frac{\pi}{\varrho_0^\gamma}
  \right\|_\infty\leq \left\|\frac{\pi_0}{\varrho_0^\gamma}
  \right\|_\infty+\frac{\gamma^2}{4\mu}\int_0^T\left\|\frac{G}
  {\varrho_0^{\frac\gamma2}}\right\|_\infty^2dt<\infty
\end{eqnarray*}
and, thus, $\frac{\pi}{\varrho^\gamma}$ has a uniform upper bound
on $\mathbb R\times(0,T)$.
Concerning the uniform positive lower bound, by (\ref{lcnsJ}) and (\ref{lcnspi}), one has
\begin{equation*}
  (J^\gamma\pi)_t =(\gamma-1)\mu J^{\gamma-2}(v_y)^2\geq0
\end{equation*}
and, thus, $J^\gamma(y,t)\pi(y,t)\geq J_0^\gamma(y)\pi_0(y)=\pi_0(y)$, which leads to
$$
\inf_{y\in\mathbb R, t\in[0,T]}\frac{J^\gamma(y,t)\pi(y,t)}{\varrho_0^\gamma(y)}\geq\inf_{y\in\mathbb R}\frac{\pi_0(y)}{\varrho_0^\gamma(y)}>0.
$$
Hence, $\frac{\pi}{\varrho^\gamma}$ has a uniform
positive lower bound on $\mathbb R\times(0,T)$.
\end{proof}

\section{Global existence in the presence of far field vacuum}
\label{secglo}
This section is devoted to proving the global existence of strong solutions to system (\ref{lcnsJ})--(\ref{lcnspi}), subject to (\ref{IC}), which proves Theorem \ref{GLOBAL}. Throughout this section, it is always set $J_0\equiv1$.

As preparations, several a priori estimates are stated in the next
propositions. We start with 
the basic energy identity of a strong solution to system (\ref{lcnsJ})--(\ref{lcnspi}), subject to (\ref{IC}).

\begin{proposition}
\label{basic}
Given a positive time $T$, and let $(J,v,\pi)$ be a strong solution to system (\ref{lcnsJ})--(\ref{lcnspi}), subject to (\ref{IC}), on $\mathbb R\times(0,T)$, with $(\varrho_0, v_0, \pi_0)$ satisfying (H1), (H2), and (H4). Then, it holds that 
\begin{equation*}
  \int_{\mathbb R}\left(\frac{\varrho_0v^2}{2}+\frac{J\pi}{\gamma-1}\right) dy=\int_{\mathbb R}\left(\frac{\varrho_0v_0^2}{2}+\frac{\pi_0}{\gamma-1}\right) dy,\quad t\in[0,T].
\end{equation*}
\end{proposition}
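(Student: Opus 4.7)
The plan is to derive the identity via the standard energy method, multiplying the momentum equation (\ref{lcnsv}) by $v$ and combining with the evolution of $J\pi$ obtained from (\ref{lcnsJ}) and (\ref{lcnspi}); the key technical issue is justifying the integrations by parts on the whole line when $v$ has no a priori pointwise decay, and this is where the lower bound $\varrho_0\ge A_0/(1+|y|)^2$ in (H4) enters, consistent with Remark \ref{remarkrholowerbound}.

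First, a direct computation using $J_t=v_y$ in (\ref{lcnspi}) gives
\begin{equation*}
(J\pi)_t = v_y\pi - \gamma v_y\pi + (\gamma-1)\mu\frac{v_y^2}{J} = -(\gamma-1)\left[v_y\pi - \mu\frac{v_y^2}{J}\right].
\end{equation*}
Pick $\phi\in C_c^\infty((-2,2))$ with $\phi\equiv 1$ on $(-1,1)$ and put $\phi_R(y)=\phi(y/R)$. Multiplying (\ref{lcnsv}) by $v\phi_R^2$ and integrating by parts (permitted since $\phi_R$ has compact support), then adding the identity above multiplied by $\phi_R^2/(\gamma-1)$, the terms $\mu\int(v_y^2/J)\phi_R^2\,dy$ and $\int \pi v_y\phi_R^2\,dy$ cancel, leaving
\begin{equation*}
\frac{d}{dt}\int_{\mathbb R}\left[\frac{\varrho_0 v^2}{2}+\frac{J\pi}{\gamma-1}\right]\phi_R^2\,dy = -2\int_{\mathbb R} G\,v\,\phi_R\phi_R'\,dy, \qquad G:=\mu\frac{v_y}{J}-\pi.
\end{equation*}
Integrating in $t$ produces a cutoff version of the target identity, valid for every $R$.

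Next I would pass $R\to\infty$. For the left-hand side, dominated convergence applies once $\varrho_0 v^2,\,J\pi\in L^\infty(0,T;L^1)$. The first is immediate from $\sqrt{\varrho_0}v\in C([0,T];L^2)$. For the second, the pointwise bound $\pi(y,t)\le \pi_0(y)+\tfrac{\gamma^2}{4\mu}\int_0^t G^2(y,\tau)\,d\tau$ (from the Riccati form of the pressure equation, cf.~(\ref{pi.1})), together with $\pi_0\in L^1$ from (H4), $G\in L^2(\mathbb R\times(0,T))$, and a uniform upper bound on $J$ (coming from $J_t=v_y$ and $v_y\in L^2(0,T;L^\infty)$, the latter by Sobolev embedding from $v_y\in L^\infty(0,T;L^2)$ and $v_{yy}/\sqrt{\varrho_0}\in L^2(0,T;L^2)$), yields $J\pi\in L^\infty(0,T;L^1)$. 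For the right-hand side, $|\phi_R'|\le C/R$ with support in $R<|y|<2R$, and the key lower bound of (H4) gives
\begin{equation*}
\|v(\cdot,\tau)\|_{L^2(R<|y|<2R)} \le \frac{1+2R}{\sqrt{A_0}}\,\|\sqrt{\varrho_0}\,v(\cdot,\tau)\|_{L^2(R<|y|<2R)},
\end{equation*}
so Cauchy--Schwarz produces
\begin{equation*}
\left|\int_{\mathbb R}Gv\phi_R\phi_R'\,dy\right|\le \frac{C(1+2R)}{R\sqrt{A_0}}\|G(\cdot,\tau)\|_{L^2}\,\|\sqrt{\varrho_0}\,v(\cdot,\tau)\|_{L^2(R<|y|<2R)}.
\end{equation*}
The prefactor is uniformly bounded in $R$, $\|G\|_{L^\infty(0,T;L^2)}<\infty$, and $\|\sqrt{\varrho_0}v\|_{L^2(R<|y|<2R)}\to 0$ pointwise in $\tau$ as $R\to\infty$; one further dominated convergence in $\tau$ kills the time integral.

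The main obstacle is precisely this passage to the limit. The regularity $\sqrt{\varrho_0}v\in L^2$ alone permits $v$ to grow like $1/\sqrt{\varrho_0}$ at infinity, so without the quantitative lower bound $\varrho_0\ge A_0/(1+|y|)^2$ the annular factor $(1+2R)/R$ could be replaced by something unbounded in $R$ and the cutoff flux of $Gv$ need not vanish. The asymmetry here---$G$ only in $L^2$, while $v$ may grow---is the subtle point, and it is exactly what (H4) neutralizes, in line with the viewpoint of Remark \ref{remarkrholowerbound}.
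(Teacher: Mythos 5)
Your proof is correct and follows essentially the same strategy as the paper: you derive the cut-off energy identity by adding the weighted momentum and pressure balances so that the $\mu v_y^2/J$ and $\pi v_y$ terms cancel, and you pass to the limit $R\to\infty$ by exploiting the lower bound $\varrho_0\geq A_0/(1+|y|)^2$ in (H4) to absorb the $1/R$ decay of the cutoff derivative. The paper packages this as $|\eta_r'|\leq M_0\sqrt{\varrho_0}$ and uses $L^1$-integrability of $\sqrt{\varrho_0}v\pi$ and $\sqrt{\varrho_0}vv_y$ over $\mathbb R\times(0,T)$, which is the same computation written slightly differently from your Cauchy--Schwarz with the annular prefactor $(1+2R)/R$.
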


\begin{proof}
Take a nonnegative function $\eta\in C_c^\infty((-2,2))$ such that $\eta\equiv1$ on $[-1,1]$ and $0\leq\eta\leq1$ on $(-2,2)$. For each $r\in(0,\infty)$, we define a
function $\eta_r$ as
$\eta_r(\cdot)=\eta(\frac{\cdot}{r})$. Multiplying (\ref{lcnsv}) and (\ref{lcnspi}), respectively, by $v\eta_r^2$ and $J\eta_r^2$, and integrating the resultants over $\mathbb R$, one gets from integration by parts that
\begin{equation}
  \label{K.1}
  \frac12\frac{d}{dt}\int_{\mathbb R}\varrho_0v^2\eta_r^2 dy+\mu\int_{\mathbb R} \frac{v_y^2\eta_r^2}{J}dy =\int_{\mathbb R}\pi v_y\eta_r^2 dy+2\int_\mathbb R\left(\pi v-\mu\frac{v_yv}{J}\right)\eta_r\eta_r'dy.
\end{equation}
and
\begin{equation}
  \label{K.2}
  \int_{\mathbb R}J\pi_t\eta_r^2dy+\gamma\int_{\mathbb R}\pi v_y\eta_r^2 dy=(\gamma-1)\mu\int_{\mathbb R}\frac{v_y^2\eta_r^2}{J}dy.
\end{equation}
(\ref{lcnsJ}) implies that 
\begin{equation*}
  \int_{\mathbb R}J\pi_t\eta_r^2dy=\frac{d}{dt}\int_{\mathbb R}J\pi\eta_r^2 dy-\int_{\mathbb R}J_t\pi\eta_r^2 dy=\frac{d}{dt}\int_{\mathbb R}J\pi\eta_r^2 dy-\int_{\mathbb R}v_y\pi\eta_r^2 dy,
\end{equation*}
which, plugged in to (\ref{K.2}) yields,
\begin{equation*}
  \frac{d}{dt}\int_{\mathbb R}J\pi\eta_r^2 dy+(\gamma-1)\int_{\mathbb R}v_y\pi\eta_r^2 dy =(\gamma-1)\mu\int_{\mathbb R}\frac{v_y^2\eta_r^2}{J}dy.
\end{equation*}
This, together with (\ref{K.1}), yields
\begin{equation*}
  \frac{d}{dt}\int_{\mathbb R}\left(\frac{\varrho_0v^2}{2}+\frac{J\pi}{\gamma-1}\right) \eta_r^2dy=2\int_\mathbb R\left(\pi-\mu\frac{v_y}{J}\right)v\eta_r\eta_r'dy,
\end{equation*}
which implies 
\begin{eqnarray}
  \int_{\mathbb R}\left(\frac{\varrho_0v^2}{2}+\frac{J\pi}{\gamma-1}\right) \eta_r^2dy&=&2\int_0^t\int_\mathbb R\left(\pi-\mu\frac{v_y}{J}\right)v\eta_r\eta_r'dyd\tau\nonumber\\
  &&+\int_{\mathbb R}\left(\frac{\varrho_0v_0^2}{2}+\frac{\pi_0}{\gamma-1}\right) \eta_r^2dy,\label{KK1}
\end{eqnarray}
for any $t\in[0,T]$.
Noticing that $|\eta_r'|\leq \frac{\|\eta'\|_\infty}{r}$, by the assumption $\varrho_0(y)\geq\frac{A_0}{(1+|y|)^2}$, one deduces
$$
|\eta_r'(y)|\leq \frac{\|\eta'\|_\infty}{r}\leq\frac{2\|\eta'\|_\infty}{|y|}\leq\frac{4\|\eta'\|_\infty} {|y|+1}\leq \frac{4\|\eta'\|_\infty}{\sqrt{A_0}}\sqrt{\varrho_0(y)},\quad\forall ~1\leq r<|y|<2r,
$$
from which, noticing that $supp\,\eta_r'\subseteq(-2r,r)\cup(r,2r)$, we
obtains
\begin{equation}
  \label{eta}
  |\eta_r'(y)|\leq  {M_0}{\sqrt{\varrho_0(y)}},\quad\forall y\in\mathbb R,\quad\mbox{ where } M_0=\frac{4\|\eta'\|_\infty}{\sqrt{A_0}},
\end{equation}
for any $r\geq1$.
Therefore, denoting
$\delta_T=\inf_{y\in\mathbb R,t\in[0,T]}J(y,t)$, and noticing
that $\sqrt{\varrho_0}v\pi$, $\sqrt{\varrho_0}vv_y\in L^1(\mathbb R\times(0,T))$, we deduce
\begin{eqnarray*}
\left|\int_0^t\int_\mathbb R\left(\pi-\mu\frac{v_y}{J}\right)v\eta_r\eta_r'dyd\tau\right|
=\left|\int_0^t\int_{r<|y|<2r} \left(\pi-\mu\frac{v_y}{J}\right)v\eta_r\eta_r'dyd\tau\right|\\
\leq M_0\int_0^t\int_{r<|y|<2r}\left(|\pi|+\frac{\mu}{\delta_T}
  |v_y|\right)|\sqrt{\varrho_0} v|dyd\tau\rightarrow0,\quad\mbox{as }r\rightarrow\infty,
\end{eqnarray*}
for any $t\in[0,T]$. Thanks to this, and taking $r\rightarrow\infty$ in (\ref{KK1}), the conclusion follows.
\end{proof}

The next proposition yields the uniform positive lower bound of $J$.

\begin{proposition}
\label{lbdJ}
Under the same assumptions as in Proposition \ref{basic}, it holds that 
\begin{equation*}
  \inf_{y\in\mathbb R}J(y,t)\geq e^{-\frac{2\sqrt{2}}{\mu}\sqrt{\mathcal E_0}\|\varrho_0\|_1},\quad t\in[0,T],
\end{equation*}
where
$\mathcal E_0:=\int_{\mathbb R}\left(\frac{\varrho_0v_0^2}{2}+\frac{\pi_0}{\gamma-1}\right) dy.$
\end{proposition}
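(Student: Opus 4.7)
The strategy is to represent $\log J$ through the effective viscous flux $G = \mu v_y/J - \pi$. Setting $\Phi(y,t):=\int_0^t G(y,\tau)\,d\tau$, the relation $J_t=v_y$ together with $J_0\equiv 1$ and $v_y/J=(G+\pi)/\mu$ gives $(\log J)_t=(G+\pi)/\mu$, hence
\begin{equation*}
\mu\log J(y,t)=\Phi(y,t)+\int_0^t\pi(y,\tau)\,d\tau\geq\Phi(y,t),
\end{equation*}
since $\pi\geq 0$. Thus it suffices to produce a uniform lower bound on $\Phi(y,t)$.

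To analyze $\Phi$, I would exploit that the momentum equation (\ref{lcnsv}) can be rewritten as $\varrho_0 v_t=G_y$. Integrating in time then yields the key identity
\begin{equation*}
\Phi_y(y,t)=\varrho_0(y)\bigl(v(y,t)-v_0(y)\bigr).
\end{equation*}
Using cutoff functions $\eta_r$ as in the proof of Proposition \ref{basic} --- whose boundary terms decay thanks to the lower bound $\varrho_0(y)\geq A_0/(1+|y|)^2$ in (H4) --- the integrated momentum equation gives $\int_\mathbb{R}\varrho_0(v-v_0)\,dy=0$, so in particular $\int_\mathbb{R}\Phi_y\,dy=0$, meaning $\Phi$ has equal limits at $\pm\infty$.

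The next step is to show $\Phi(\pm\infty,t)=0$ (invoking the $L^2$-decay of $G(\cdot,t)$ built into Definition \ref{defloc}), from which
\begin{equation*}
\Phi(y,t)=\int_{-\infty}^y\varrho_0(z)\bigl(v(z,t)-v_0(z)\bigr)\,dz,\qquad |\Phi(y,t)|\leq \|\varrho_0(v-v_0)\|_{L^1(\mathbb{R})}.
\end{equation*}
Cauchy--Schwarz yields $\|\varrho_0(v-v_0)\|_{L^1}\leq\|\sqrt{\varrho_0}\|_2\,\|\sqrt{\varrho_0}(v-v_0)\|_2$, and Proposition \ref{basic} combined with $\pi\geq 0$ furnishes $\|\sqrt{\varrho_0}v\|_2^2\leq 2\mathcal{E}_0$ and the same bound for $v_0$. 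Assembling these estimates produces $\mu\log J(y,t)\geq\Phi(y,t)\geq -2\sqrt{2}\sqrt{\mathcal{E}_0}\,\|\varrho_0\|_1^{1/2}$, which exponentiated yields the claimed lower bound.

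The main technical obstacle is justifying these infinite-domain integrations by parts, in particular the vanishing $\Phi(\pm\infty,t)=0$ and the momentum conservation, for strong solutions that only decay in an integrated sense. This is handled exactly as in Proposition \ref{basic}, via the cutoff device $|\eta_r'|\leq M_0\sqrt{\varrho_0}$ made possible by hypothesis (H4); alternatively, as Remark \ref{remarkrholowerbound} points out, one may bypass the pointwise density lower bound by approximating the Cauchy problem via initial--boundary-value problems on bounded intervals, where the analogous boundary computations are immediate.
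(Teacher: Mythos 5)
Your proof is correct and shares the paper's core strategy (time-integrate the momentum balance, identify $\varrho_0(v-v_0)$ as a spatial derivative, bound it via Cauchy--Schwarz and the energy identity from Proposition \ref{basic}, and use $\pi\ge0$), but it handles the far-field limit in a genuinely simpler way. The paper derives $J(z,t)=J(z_0,t)\exp\{\tfrac1\mu\int_{z_0}^z\varrho_0(v-v_0)\,dz\}\exp\{\tfrac1\mu\int_0^t(\pi-\pi(z_0,\tau))\,d\tau\}$ and must then show both $J(z_0,t)\to1$ and $\pi(z_0,\tau)\to0$ as $z_0\to\infty$; the latter requires the explicit solution formula for (\ref{lcnspi}) and an $H^1$ estimate on $\int_0^t(v_y)^2\,d\tau$, and it is the bulk of the paper's proof. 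You instead discard $\int_0^t\pi\,d\tau\ge0$ at the outset (using $J_0\equiv1$), reducing the problem to the bound $\mu\log J\ge\Phi$ with $\Phi=\int_0^t G\,d\tau$, and then observe that $\Phi\in L^\infty(0,T;L^2)$ (since $G\in L^\infty(0,T;L^2)$ by Definition \ref{defloc}) together with $\Phi_y=\varrho_0(v-v_0)\in L^\infty(0,T;L^2)$ yields $\Phi(\cdot,t)\in H^1(\mathbb R)$, hence $\Phi\to0$ at $\pm\infty$ by Sobolev embedding. That single observation replaces the paper's entire far-field pressure analysis and is the nicest feature of your argument. Two minor points are worth flagging. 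First, the momentum-conservation detour in your write-up is redundant: it only gives that $\Phi$ has equal limits at $\pm\infty$, which is already implied once $\Phi(\cdot,t)\in H^1(\mathbb R)$; deleting it also avoids any appearance of circularity, since Proposition \ref{momentum} is in fact stated after, and its proof leans on, Proposition \ref{lbdJ}. Second, on the constant: a clean Cauchy--Schwarz gives $\|\varrho_0(v-v_0)\|_1\le\sqrt{\|\varrho_0\|_1}\,\|\sqrt{\varrho_0}(v-v_0)\|_2$, so your factor $\|\varrho_0\|_1^{1/2}$ is what actually comes out; the factor $\|\varrho_0\|_1$ in the proposition (and in the paper's own H\"older line) appears to be a harmless typo rather than a gap in your argument.
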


\begin{proof}
Due to (\ref{lcnsJ}), one rewrite 
(\ref{lcnsv}) as
\begin{equation*}
  \varrho_0v_t-\mu(\log J)_{yt}+\pi_y=0.
\end{equation*}
Thus, 
\begin{equation*}
  \varrho_0v-\mu(\log J)_y+\int_0^t\pi_yd\tau=\varrho_0 v_0. 
\end{equation*}
Integrating the above identity over the interval $(z_0, z)$ yields
\begin{equation}
  \label{K.3}
  J(z,t)=J(z_0,t)\exp\left\{\frac1\mu \int_{z_0}^z\varrho_0(v-v_0) dz\right\}\exp\left\{\frac1\mu \int_0^t(\pi-\pi(z_0,\tau))d\tau\right\},
\end{equation}
for any $z,z_0\in\mathbb R$ and $t\in(0,T_\infty)$. By Proposition \ref{basic}, it follows from the H\"older inequality that
$$
\left|\int_{z_0}^z\varrho_0(v-v_0)dy\right|\leq\|\varrho_0\|_1(\|\sqrt{\varrho_0} v\|_2+\|\sqrt{\varrho_0}v_0\|_2)\leq 2\sqrt{2\mathcal E_0}\|\varrho_0\|_1.
$$
It then follows from this, $\pi\geq0$, and (\ref{K.3}) that
\begin{equation}
\label{K.4}
  J(z,t)\geq \exp\left\{-\frac{2\sqrt{2}}{\mu}\sqrt{\mathcal E_0}\|\varrho_0\|_1\right\}\exp\left\{-\frac1\mu \int_0^t \pi(z_0,\tau) d\tau\right\}J(z_0,t),
\end{equation}
for any $z,z_0\in\mathbb R$ and $t\in(0,T_\infty)$.

Since $v_y\in L^2(0,T; H^1)$, it is clear
that, for any fixed $t\in[0,T]$, $\int_0^tv_yd\tau\in H^1$. Thanks to this, and using the fact that $f(y)\rightarrow0$, as $y\rightarrow\infty$, for any $f\in H^1$, one obtains from equation (\ref{lcnsJ}) that
\begin{equation}
\label{K.5}
J(y,t)=1+\int_0^tv_y(y,\tau) d\tau\rightarrow1,\quad\mbox{as }y\rightarrow\infty,
\end{equation}
for any $t\in[0,T]$. By the Sobolev embedding inequality, it follows
from $v_y\in L^2(0,T; H^1)$ that $v_y\in L^2(0,T; L^\infty)$; therefore, recalling $v_y\in L^\infty(0,T; L^2)$, it follows from the H\"older inequality that
$$
(v_y)^2\in L^2(0,T; L^2), \quad v_yv_{yy}\in L^1(0,T; L^2),
$$
which imply $\int_0^t(v_y)^2d\tau\in H^1$, for any fixed $t\in[0,T]$.
Thanks to this, and using again the fact that $f(y)\rightarrow0$, as $y\rightarrow\infty$, for any $f\in H^1$, one has 
$$
\lim_{y\rightarrow\infty}\int_0^t|v_y|(y,\tau) d\tau\leq\sqrt t\left(\lim_{y\rightarrow\infty}\int_0^t(v_y)^2(y,\tau) d\tau\right)^{\frac12} =0,
$$
for any $t\in[0,T]$.
Therefore, denoting
$\delta_T=\inf_{y\in\mathbb R, t\in(0,T)}J(y,t)$, and solving equation (\ref{lcnspi}), we deduce
\begin{eqnarray}
  \pi(y,t)&=&\exp\left\{-\gamma\int_0^t\frac{v_y}{J}d\tau
  \right\}\left(\pi_0(y)+(\gamma-1)\mu \int_0^te^{\gamma\int_0^\tau\frac{v_y}{J}d\tau'}\left(\frac{v_y}{J}\right)^2 d\tau\right)\nonumber\\
  &\leq&e^{\frac{\gamma}{\delta_T}\int_0^t|v_y|d\tau} \left(\pi_0(y)+\frac{(\gamma-1)\mu}{\delta_T^2} \int_0^tv_y^2 d\tau\right)
  \rightarrow0,\quad\mbox{as }y\rightarrow\infty,\label{K.6}
\end{eqnarray}
for any $t\in[0,T]$.

Due to (\ref{K.5}) and (\ref{K.6}), by taking $z_0\rightarrow\infty$, one obtains from (\ref{K.4}) that
\begin{equation*}
  \inf_{y\in\mathbb R}J(y,t)\geq e^{-\frac{2\sqrt{2}}{\mu}\sqrt{\mathcal E_0}\|\varrho_0\|_1},
\end{equation*}
for any $t\in[0,T]$, proving the conclusion.
\end{proof}

The next proposition gives the conservation of momentum.

\begin{proposition}
\label{momentum}
  Under the same assumptions as in Proposition \ref{basic}, it holds that 
  $$
  \int_\mathbb R\varrho_0(y)v(y,t)dy=\int_\mathbb R\varrho_0(y)v_0(y)dy,\quad\forall t\in[0,T].
  $$
\end{proposition}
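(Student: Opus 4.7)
The plan is to test the momentum equation (\ref{lcnsv}) against the cutoff $\eta_r(y)=\eta(y/r)$ already used in the proof of Proposition \ref{basic}, and then to send $r\to\infty$, exploiting the far-field bound $|\eta_r'|\leq M_0\sqrt{\varrho_0}$ from (\ref{eta}) together with the integrability already obtained in Propositions \ref{basic} and \ref{lbdJ}.

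First I would multiply (\ref{lcnsv}) by $\eta_r$, integrate by parts in $y$ (the boundary terms vanish since $\eta_r$ has compact support), and then integrate in time to obtain
\[
\int_\mathbb R\varrho_0\bigl(v(\cdot,t)-v_0\bigr)\eta_r\,dy=\int_0^t\!\int_\mathbb R\Bigl(\pi-\mu\frac{v_y}{J}\Bigr)\eta_r'\,dy\,d\tau.
\]
For the LHS, Proposition \ref{basic} gives $\sqrt{\varrho_0}v\in L^\infty(0,T;L^2)$, and the assumption $\varrho_0\in L^1$ in (H4) gives $\sqrt{\varrho_0}\in L^2$; consequently $\varrho_0 v\in L^\infty(0,T;L^1)$, and dominated convergence (with $\eta_r\uparrow 1$ pointwise) shows that the LHS converges to $\int_\mathbb R\varrho_0 v(\cdot,t)\,dy-\int_\mathbb R\varrho_0 v_0\,dy$ as $r\to\infty$.

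For the RHS, I would use $|\eta_r'|\leq M_0\sqrt{\varrho_0}$ (supported in $\{r<|y|<2r\}$) together with the lower bound $J\geq c_0>0$ from Proposition \ref{lbdJ} to bound the RHS by
\[
M_0\int_0^t\!\int_{r<|y|<2r}\Bigl(\pi+\frac{\mu}{c_0}|v_y|\Bigr)\sqrt{\varrho_0}\,dy\,d\tau.
\]
Proposition \ref{basic} combined with $J\geq c_0$ gives $\pi\in L^\infty(0,T;L^1)$ and hence $\pi\sqrt{\varrho_0}\in L^1(\mathbb R\times(0,T))$; Cauchy--Schwarz with $v_y\in L^\infty(0,T;L^2)$ and $\sqrt{\varrho_0}\in L^2$ gives $\sqrt{\varrho_0}|v_y|\in L^1(\mathbb R\times(0,T))$. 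Since both integrands lie in $L^1(\mathbb R\times(0,T))$, the absolute continuity of the integral forces the tail integral over $\{r<|y|<2r\}$ to tend to $0$ as $r\to\infty$. Passing to the limit in the integrated identity yields the claimed conservation of momentum.

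The main (and essentially only) delicate point is that one cannot integrate the momentum equation directly over $\mathbb R$, because the viscous flux $\mu v_y/J$ has no a priori pointwise decay at infinity; the cutoff argument is needed precisely to extract the tail behavior, and it succeeds only because the hypothesis $\varrho_0(y)\geq A_0/(1+|y|)^2$ in (H4) supplies the weighted bound $|\eta_r'|\leq M_0\sqrt{\varrho_0}$ of (\ref{eta}), while Proposition \ref{lbdJ} supplies the uniform positive lower bound on $J$ that is required to transfer integrability from $J\pi$ to $\pi$ itself. Everything else is routine.
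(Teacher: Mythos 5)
Your proposal is correct and follows the same strategy as the paper: multiply equation (\ref{lcnsv}) by the cutoff $\eta_r$, integrate by parts and in time, then use the weighted bound $|\eta_r'|\le M_0\sqrt{\varrho_0}$ from (\ref{eta}) and the lower bound $J\ge c_0$ from Proposition \ref{lbdJ} to show the viscous-flux error term vanishes as $r\to\infty$. The integrability remarks you supply for $\sqrt{\varrho_0}\pi$ and $\sqrt{\varrho_0}v_y$ match the facts the paper invokes without detail, so the argument lines up step for step.
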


\begin{proof}
Let $\eta$ and $\eta_r$ be the same functions as in the proof of Proposition \ref{basic}. Multiplying equation (\ref{lcnsv}) by $\eta_r$, and integrating
the resultant over $\mathbb R$, one gets by integration by parts that
\begin{equation*}
  \frac{d}{dt}\int_\mathbb R\varrho_0v\eta_rdy=\int_\mathbb R\left(\pi-\mu\frac{v_y}{J}\right)\eta_r'dy,
\end{equation*}
which implies that 
\begin{equation}\label{EQN1}
  \int_\mathbb R\varrho_0v\eta_rdy=\int_\mathbb R\varrho_0v_0\eta_r dy
  +\int_0^t\int_\mathbb R\left(\pi-\mu\frac{v_y}{J}\right)\eta_r'dyd\tau.
\end{equation}
We claim that
\begin{equation}
  \lim_{r\rightarrow\infty}\int_0^t\int_\mathbb R\left(\pi-\mu\frac{v_y}{J}\right)\eta_r'dyd\tau=0, \label{EQN2}
\end{equation}
thanks to which, noticing that $\varrho_0v\in L^1(\mathbb R)$ and
$\varrho_0v_0\in L^1(\mathbb R)$, one obtains the desired conclusion by taking the
limit $r\rightarrow\infty$ in (\ref{EQN1}).

It remains to verify
(\ref{EQN2}).
To this end, recalling
$supp\,\eta_r\subseteq(-2r,-r)\cup(r,2r)$ and
(\ref{eta}), by Proposition \ref{lbdJ}, and
noticing that $\sqrt{\varrho_0}\pi\in L^1(\mathbb R\times(0,T))$ and
$\sqrt{\varrho_0}v_y\in L^1(\mathbb R\times(0,T))$, one can get 
\begin{eqnarray*}
\left|\int_0^t\int_\mathbb R \pi \eta_r'dyd\tau\right|
&=&\left|\int_0^t\int_{r<|y|<2r}\pi\eta_r'dyd\tau\right|\\
  &\leq& M_0\int_0^t\int_{r<|y|<2r}\sqrt{\varrho_0}\pi dyd\tau\rightarrow0,\quad\mbox{as }r\rightarrow\infty,
\end{eqnarray*}
and
\begin{eqnarray*}
\left|\int_0^t\int_{\mathbb R}\frac{v_y}{J}\eta_r'dyd\tau\right|
&=&\left|\int_0^t\int_{r<|y|<2r}\frac{v_y}{J}\eta_r'dyd\tau\right|\\
  &\leq&\frac{M_0}{c_0}\int_0^t\int_{r<|y|<2r}\sqrt{\varrho_0}|v_y|dyd\tau
  \rightarrow0,\quad\mbox{as }r\rightarrow\infty,
\end{eqnarray*}
where $c_0=e^{-\frac{2\sqrt{2}}{\mu}\sqrt{\mathcal
E_0}\|\varrho_0\|_1}$. Therefore, (\ref{EQN2}) holds and, consequently,
the conclusion follows.
\end{proof}

The following proposition on the global in time a priori estimates on
the effective viscous flux $G$ is the key for proving
the global existence
of strong solutions.

\begin{proposition}
\label{estG}
Under the same assumptions as in Proposition \ref{basic}, it holds that 
\begin{eqnarray*}
&\displaystyle\sup_{0\leq t\leq T} \|G\|_2^2 +\int_0^{T} \left\|
\frac{G_y} {\sqrt{\varrho_0}}\right\|_2^2 dt+
\left(\int_0^{T} \|G\|_\infty^4dt\right)^{\frac12}\leq Ce^{CT}\|G_0\|_2^2 ,
\end{eqnarray*}
for a positive constant $C$ depending only on $\gamma,\mu,\bar\varrho,\mathcal E_0$, and $\|\varrho_0\|_1$.
\end{proposition}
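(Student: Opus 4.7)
The plan is to reproduce the energy identity for $G$ used in Proposition \ref{propGphi} and then, rather than closing with a Riccati inequality as in the local case, to handle the nonlinear cubic term on the right by an integration by parts, exploiting the identity $\varrho_0 v_t = G_y$ (which follows from (\ref{lcnsv}) and the definition of $G$) together with the globally available $L^\infty_t L^2_y$ bound on $\sqrt{\varrho_0} v$ from Proposition \ref{basic} and the uniform lower bound $J \geq c_0$ from Proposition \ref{lbdJ}. This converts the nonlinear bad term into one that is linear in $\|\sqrt{J}G\|_2^2$ after absorption of the dissipation, which is what produces the $Ce^{CT}\|G_0\|_2^2$ shape of the bound.

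Concretely, I would first multiply equation (\ref{eqG}) by $JG$, integrate in $y$, and use $J_t = v_y$ exactly as in the derivation of (\ref{G.3}), obtaining
$$\frac{1}{2}\frac{d}{dt}\|\sqrt{J}G\|_2^2 + \mu\left\|\frac{G_y}{\sqrt{\varrho_0}}\right\|_2^2 = \left(\frac{1}{2}-\gamma\right)\int v_y G^2\, dy.$$
Instead of estimating the right-hand side by $\|G\|_\infty\|\sqrt{J}G\|_2^2$, I would integrate by parts to get $\int v_y G^2\, dy = -2\int v G G_y\, dy$, substitute $G_y = \varrho_0 v_t$, and apply Cauchy--Schwarz to obtain
$$\left|\int v_y G^2\, dy\right| \leq 2\|\sqrt{\varrho_0}v\|_2\,\|G\|_\infty\,\left\|\frac{G_y}{\sqrt{\varrho_0}}\right\|_2.$$
Using $\|\sqrt{\varrho_0}v\|_2 \leq \sqrt{2\mathcal{E}_0}$ from Proposition \ref{basic} and the Gagliardo--Nirenberg interpolation $\|G\|_\infty \leq C\|G\|_2^{1/2}\|G_y\|_2^{1/2}$, converted via $J \geq c_0$ and $\varrho_0 \leq \bar\varrho$ into $\|G\|_\infty \leq C\|\sqrt{J}G\|_2^{1/2}\|G_y/\sqrt{\varrho_0}\|_2^{1/2}$, and then applying Young's inequality with exponents $(4,4/3)$ to absorb a fraction of $\|G_y/\sqrt{\varrho_0}\|_2^2$ into the dissipation, yields the \emph{linear} Gronwall inequality
$$\frac{d}{dt}\|\sqrt{J}G\|_2^2 + \mu\left\|\frac{G_y}{\sqrt{\varrho_0}}\right\|_2^2 \leq C\|\sqrt{J}G\|_2^2,$$
with $C = C(\gamma,\mu,\bar\varrho,\mathcal{E}_0,\|\varrho_0\|_1)$, the $\|\varrho_0\|_1$-dependence entering through $c_0$. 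Integrating in time with $J_0\equiv 1$ yields $\sup_{[0,T]}\|\sqrt{J}G\|_2^2 + \int_0^T\|G_y/\sqrt{\varrho_0}\|_2^2\, dt \leq Ce^{CT}\|G_0\|_2^2$, which passes to the stated $\|G\|_2^2$ and $\|G_y/\sqrt{\varrho_0}\|_2^2$ bounds via $J\geq c_0$; the $L^4_tL^\infty_y$ bound then comes for free from $\|G\|_\infty^4 \leq 4\|G\|_2^2\|G_y\|_2^2$ integrated in time.

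The main obstacle will be the rigorous justification of the integration by parts $\int v_y G^2\, dy = -2\int v G G_y\, dy$, since $v$ can grow like $\sqrt{|y|}$ at the far field (from Proposition \ref{basic}, only the weighted $\sqrt{\varrho_0}v \in L^2$ is available) while $G\in H^1$ only ensures $G\to 0$ at infinity without a prescribed rate. I would handle this exactly as in the proof of Proposition \ref{basic}: truncate with the cutoff $\eta_r$ and use the pointwise bound $|\eta_r'|\leq M_0\sqrt{\varrho_0}$ from (\ref{eta}) -- which is precisely where the lower decay assumption in (H4) enters -- so that the boundary contribution collapses as $r\to\infty$ thanks to $\sqrt{\varrho_0}v\in L^2$ and $G\in H^1$ uniformly in the local interval.
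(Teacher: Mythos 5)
Your proposal is correct and follows essentially the same route as the paper: multiply equation (\ref{eqG}) by $JG\eta_r^2$, use $J_t=v_y$, integrate by parts to convert $\int v_y G^2\,dy$ into $-2\int vGG_y\,dy$, estimate via $\|\sqrt{\varrho_0}v\|_2\leq\sqrt{2\mathcal E_0}$ and the Gagliardo--Nirenberg interpolation, absorb the dissipation by Young's inequality, and close with Gronwall, with the cutoff $\eta_r$ and the bound $|\eta_r'|\leq M_0\sqrt{\varrho_0}$ from (\ref{eta}) justifying the integration by parts exactly as you describe. The only cosmetic point is that the explicit substitution $G_y=\varrho_0 v_t$ is not needed; the Cauchy--Schwarz step uses the weight splitting $\sqrt{\varrho_0}v\cdot G\cdot G_y/\sqrt{\varrho_0}$ directly, which is all the paper does.
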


\begin{proof}
Let $\eta$ and $\eta_r$ be the same functions as in the proof of Proposition \ref{basic}.
Note that $G$ satisfies (\ref{eqG}). Multiplying (\ref{eqG}) by $JG\eta^2_r$, and integrating the resultant over
$\mathbb R$, one gets by integration by parts that
\begin{equation}\label{KG.1}
  \int_{\mathbb R}JGG_t\eta^2_rdy+\mu\int_{\mathbb
  R}\frac{(G_y)^2\eta^2_r}{\varrho_0}dy=-\gamma\int_{\mathbb R}
  v_y G^2\eta^2_rdy-2\mu\int_\mathbb R\frac{G_y}{\varrho_0}G\eta_r\eta_r'dy.
\end{equation}
On the other hand, (\ref{lcnsJ}) implies that 
\begin{equation*}
  \int_{\mathbb R}JGG_t\eta^2_rdy =  \frac12\left(\frac{d}{dt}\int_{\mathbb R}JG^2\eta^2_rdy- \int_{\mathbb R}v_y
  G^2\eta^2_rdy\right).
\end{equation*}
Plugging the above into (\ref{KG.1}) and
integrating by parts show that
\begin{eqnarray*}
   \frac12\frac{d}{dt}\int_{\mathbb R} JG^2\eta^2_rdy+  \mu\int_{\mathbb
  R}\frac{(G_y)^2\eta^2_r}{\varrho_0}dy =\left(\frac12-\gamma\right)\int_{\mathbb R} v_yG^2\eta_r^2 dy
   -2\mu\int_\mathbb R\frac{GG_y}{\varrho_0}\eta_r\eta_r'dy\nonumber\\
   =(2\gamma-1)\int_\mathbb R vGG_y\eta_r^2dy+\int_{\mathbb R}\left[(2\gamma-1)vG^2-2\mu\frac{GG_y}{\varrho_0}\right]\eta_r\eta_r' dy,
\end{eqnarray*}
which implies that 
\begin{eqnarray}
  &&\int_{\mathbb R} JG^2\eta^2_rdy+  2\mu\int_0^t\int_{\mathbb
  R}\frac{(G_y)^2\eta^2_r}{\varrho_0}dy d\tau\nonumber \\
  &=&2\int_0^t\int_{\mathbb R}\left((2\gamma-1)vG^2-2\mu\frac{G_y}{\varrho_0}G\right)\eta_r\eta_r' dyd\tau\nonumber\\
  &&+(4\gamma-4)\int_0^t\int_\mathbb R vGG_y\eta_r^2dyd\tau+\int_\mathbb RG_0^2\eta_r^2dy,\label{KG.2}
\end{eqnarray}
for any $t\in[0,T]$.

We are going to show that
\begin{eqnarray}
&&\lim_{r\rightarrow\infty}\int_0^t\int_\mathbb R vGG_y\eta_r^2dyd\tau=\int_0^t\int_\mathbb R vGG_ydyd\tau,\label{KG.3}\\
&&\lim_{r\rightarrow\infty}\int_0^t\int_{\mathbb R} vG^2 \eta_r\eta_r' dyd\tau=\lim_{r\rightarrow\infty}\int_0^t\int_{\mathbb R} \frac{G_y}{\varrho_0}G \eta_r\eta_r' dyd\tau=0, \label{KG.4}
\end{eqnarray}
for any $t\in[0,T]$. To verify (\ref{KG.3}), it suffices to show $vGG_y\in L^1(\mathbb R\times(0,T))$. By the regularities of $(J, v,
\pi)$, one can check that
$$
G\in L^\infty(0,T; L^2),\quad
\frac{G_y}{\sqrt{\varrho_0}}\in L^2(0,T; L^2),
$$
and further, by the Sobolev embedding, that $G\in L^2(0,T; L^\infty)$. As a result, by the H\"older
inequality, we have
$vGG_y=\sqrt{\varrho_0}vG\frac{G_y}{\sqrt{\varrho_0}}\in L^1(0,T; L^1)$ and, thus, (\ref{KG.3}) holds.

Observing that $\sqrt{\varrho_0} vG^2\in L^1(0,T; L^1)$ and $\frac{G_yG}{\sqrt{\varrho_0}}\in L^1(0,T; L^1)$, and recalling
$supp\,\eta_r\subset(-2r,r)\cup(r,2r)$ and (\ref{eta}), we have
\begin{eqnarray*}
  \left|\int_0^t\int_{\mathbb R}vG^2\eta_r\eta_r'dyd\tau\right|&=&
  \left|\int_0^t\int_{r<|y|<2r}vG^2\eta_r\eta_r'dyd\tau\right|\\
  &\leq & M_0\int_0^t\int_{r<|y|<2r}\sqrt{\varrho_0}|v|G^2dyd\tau\rightarrow0, \quad\mbox{as }r\rightarrow\infty,
\end{eqnarray*}
and
\begin{eqnarray*}
 \left|\int_0^t\int_{\mathbb R} \frac{G_y}{\varrho_0}G \eta_r\eta_r' dyd\tau\right|&=&\left|\int_0^t\int_{r<|y|<2r} \frac{G_y}{\varrho_0}G \eta_r\eta_r' dyd\tau\right|\\
 &\leq& M_0\int_0^t\int_{r<|y|<2r}\frac{|G_y||G|}{\sqrt{\varrho_0}}dyd\tau
 \rightarrow0,\quad\mbox{as }r\rightarrow\infty,
\end{eqnarray*}
for any $t\in[0,T]$. Therefore, (\ref{KG.4}) holds.

Due to (\ref{KG.3}) and (\ref{KG.4}), by taking $r\rightarrow\infty$,
one obtains from (\ref{KG.2}) that
\begin{eqnarray*}
 \int_{\mathbb R} JG^2 dy+  2\mu\int_0^t\int_{\mathbb
  R}\frac{(G_y)^2}{\varrho_0}dy d\tau=(4\gamma-2)\int_0^t\int_\mathbb R vGG_y dyd\tau+\int_\mathbb RG_0^2 dy,
\end{eqnarray*}
which and Proposition \ref{lbdJ} give 
\begin{eqnarray}
 \|G\|_2^2(t)+\int_0^t\left\|\frac{G_y}{\sqrt{\varrho_0}}\right\|_2^2 d\tau\leq C\left(\int_0^t\int_\mathbb R |v||G||G_y| dyd\tau+\|G_0\|_2^2 \right),\label{KG.5}
\end{eqnarray}
for any $t\in[0,T]$, where $C$ is a positive constant depending only
on $\gamma,\mu,\mathcal E_0,$ and $\|\varrho_0\|_1$. It follows from the
the H\"older, Yong, and Gagliardo-Nirenberg inequalities, and Proposition \ref{basic} that 
\begin{eqnarray*}
  &&C\int_{\mathbb R}|v||G||G_y|dy=C\int_{\mathbb R}\sqrt{\varrho_0}|v||G|\frac{|G_y}{\sqrt{\varrho_0}}dy \\
  &\leq& C\|\sqrt{\varrho_0}v\|_2\|G\|_\infty\left\|\frac{G_y}{\sqrt{\varrho_0}}
  \right\|_2\leq C\|\sqrt{\varrho_0}v\|_2\|G\|_2\|G_y\|_2^{\frac12}  \left\|\frac{G_y}{\sqrt{\varrho_0}}
  \right\|_2\\
  &\leq& C\|G\|_2^{\frac12}\left\|\frac{G_y}{\sqrt{\varrho_0}}
  \right\|_2^{\frac32}\leq\frac12\left\|\frac{G_y}{\sqrt{\varrho_0}}
  \right\|_2^2+C\|G\|_2^2,
\end{eqnarray*}
for a positive constant $C$ depending only on $\gamma,\mu,\bar\varrho,\mathcal E_0$, and $\|\varrho_0\|_1$. Plugging the above estimate into (\ref{KG.5}) yields
$$
\|G\|_2^2(t)+\int_0^t\left\|\frac{G_y}{\sqrt{\varrho_0}}\right\|_2^2 d\tau\leq C\left(\int_0^t\|G\|_2^2d\tau+\|G_0\|_2^2 \right),
$$
for any $t\in[0,T]$, for a positive constant $C$ depending only on $\gamma,\mu,\bar\varrho,\mathcal E_0$, and $\|\varrho_0\|_1$. Then 
the Gronwall inequality shows that 
$$
\sup_{0\leq t\leq T}\|G\|_2^2+\int_0^T\left\|\frac{G_y}{\sqrt{\varrho_0}}\right\|_2^2 dt
\leq Ce^{CT}\|G_0\|_2^2
$$
and, consequently, by
the Gagliardo-Nirenberg inequality, one has 
$$
\int_0^T\|G\|_\infty^4dt\leq C\int_0^T\|G\|_2^2\|G_y\|_2^2dt\leq Ce^{CT}\|G_0\|_2^4.
$$
for a positive constant $C$ depending only on $\gamma,\mu,\bar\varrho,\mathcal E_0$, and $\|\varrho_0\|_1$.
\end{proof}

Based on Proposition \ref{estG}, similar to Proposition \ref{proppi},
one can obtain the other a priori estimates stated in the next proposition.

\begin{proposition}
  \label{estother}
Under the same assumptions as in Proposition \ref{basic}, it holds that 
\begin{equation*}
\sup_{0\leq t\leq T}\left\|\left(J-1,\frac{J_y}{\sqrt{\varrho_0}},J_t,v_y, \pi,\frac{\pi_y}{\sqrt{\varrho_0}}\right)
\right\|_2\leq C,
\end{equation*}
and
\begin{equation*}
\int_0^T\left(
\|\pi_t\|_2^4+\left\|\left(\frac{v_{yy}}{\sqrt{\varrho_0}}, \sqrt{\varrho_0}v_t\right)\right\|_2^2\right)dt\leq C,
\end{equation*}
for a positive constant $C$ depending only on
$\gamma,\mu,\bar\varrho,\mathcal E_0$, $\|\varrho_0\|_1$, $\|\pi_0\|_2$,
$\big\|\frac{\pi_0'}{\sqrt{\varrho_0}}\big\|_2$, and $T$, and the constant $C$, viewing as a function of $T$,
is continuously increasing with respect to $T\in[0,\infty)$.
\end{proposition}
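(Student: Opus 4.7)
The plan is to mimic the proof of Proposition \ref{proppi}, with the short-time $G$-bounds from Proposition \ref{propGphi} replaced by the global-in-time bounds from Proposition \ref{estG} and the short-time lower bound $\frac{3}{4}\underline J$ on $J$ replaced by the uniform lower bound $c_0=e^{-\frac{2\sqrt 2}{\mu}\sqrt{\mathcal E_0}\|\varrho_0\|_1}$ supplied by Proposition \ref{lbdJ}. Since $J_0\equiv 1$ in the global setting, no separate upper bound on $J_0$ needs to be tracked.

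First, I would rewrite (\ref{lcnspi}) in the form (\ref{pi.1})
$$\pi_t+\frac{1}{\mu}\Bigl(\pi+\frac{2-\gamma}{2}G\Bigr)^{\!2}=\frac{\gamma^2}{4\mu}G^2,$$
so that $\pi_0\geq 0$ yields the pointwise bound $0\leq \pi(y,t)\leq \pi_0(y)+\frac{\gamma^2}{4\mu}\int_0^tG^2(y,\tau)d\tau$. Taking $L^2_y$ and $L^\infty_y$ norms, applying H\"older in $t$, and invoking Proposition \ref{estG} (which controls $\|G\|_{L^\infty(0,T;L^2)}$ and $\|G\|_{L^4(0,T;L^\infty)}$) gives the $L^\infty(0,T;L^2)$ and $L^\infty(0,T;L^\infty)$ bounds on $\pi$; the term $\|\pi_0\|_\infty$ is absorbed by the Gagliardo--Nirenberg inequality together with $\|\pi_0'\|_2\leq\sqrt{\bar\varrho}\,\|\pi_0'/\sqrt{\varrho_0}\|_2$. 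The $L^4(0,T;L^2)$ bound on $\pi_t$ then follows directly from (\ref{pi.1}) as in (\ref{pi.3-1}).

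For $\pi_y/\sqrt{\varrho_0}$ I would differentiate (\ref{pi.1}) in $y$, test against $\pi_y/\varrho_0$, and derive, exactly as in the chain leading to (\ref{pi.4}),
$$\frac{d}{dt}\left\|\frac{\pi_y}{\sqrt{\varrho_0}}\right\|_2^{\!2}\leq C\left\|\frac{G_y}{\sqrt{\varrho_0}}\right\|_2^{\!2}+C\bigl(1+\|G\|_\infty^2+\|\pi\|_\infty^2\bigr)\left\|\frac{\pi_y}{\sqrt{\varrho_0}}\right\|_2^{\!2},$$
in which the first right-hand term is $L^1(0,T)$ by Proposition \ref{estG} and the Gronwall coefficient is $L^1(0,T)$ by the bounds above; Gronwall then gives the $L^\infty(0,T;L^2)$ bound on $\pi_y/\sqrt{\varrho_0}$. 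For the $J$-quantities I solve $J_t=\frac{J}{\mu}(G+\pi)$ with $J_0\equiv 1$ to obtain $J=\exp\!\left(\frac{1}{\mu}\int_0^t(G+\pi)d\tau\right)$; the $L^4_tL^\infty_y$ bound on $G$ and the $L^\infty_{t,y}$ bound on $\pi$ then produce a $T$-dependent upper bound on $\|J\|_\infty$, while Proposition \ref{lbdJ} supplies the positive lower bound. The estimates on $\|J-1\|_2$, $\|J_t\|_2$, and $\|J_y/\sqrt{\varrho_0}\|_2$ follow as in (\ref{J.1-1}) and (\ref{pi.5}) by differentiating this closed-form representation and applying Cauchy--Schwarz in $t$.

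Finally, $v_y=\frac{J}{\mu}(G+\pi)$ supplies the $L^\infty(0,T;L^2)$ bound on $v_y$; $\sqrt{\varrho_0}v_t=G_y/\sqrt{\varrho_0}$ supplies the $L^2(0,T;L^2)$ bound on $\sqrt{\varrho_0}v_t$ directly from Proposition \ref{estG}; and differentiating $v_y$ in $y$ gives $v_{yy}=\frac{J_y}{\mu}(G+\pi)+\frac{J}{\mu}(G_y+\pi_y)$, which, divided by $\sqrt{\varrho_0}$, is controlled in $L^2(0,T;L^2)$ by the quantities already bounded, as in (\ref{pi.7}). Rather than any single hard estimate, the main technical point is the bookkeeping required to ensure that every constant depends only on the listed parameters $\gamma,\mu,\bar\varrho,\mathcal E_0,\|\varrho_0\|_1,\|\pi_0\|_2,\|\pi_0'/\sqrt{\varrho_0}\|_2,T$, and is continuously increasing in $T$; since every $G$-bound from Proposition \ref{estG} is of type $Ce^{CT}$ and every subsequent step uses H\"older-in-$t$ or Gronwall with $L^1(0,T)$ factors, this continuous monotone dependence on $T$ propagates through the whole chain.
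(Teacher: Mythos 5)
Your proposal follows the paper's proof essentially step by step: you replace the short-time $G$-estimates of Proposition \ref{propGphi} by the global-in-time estimates of Proposition \ref{estG}, invoke Proposition \ref{lbdJ} for the uniform positive lower bound on $J$, and note that the upper bound $\|J\|_\infty\leq C$ — which in Proposition \ref{proppi} is a hypothesis — must now be derived from the closed-form $J=\exp\{\frac1\mu\int_0^t(G+\pi)\,d\tau\}$ together with the $L^4_tL^\infty_y$ bound on $G$ and the $L^\infty$ bound on $\pi$, before attacking $v_y$ and $v_{yy}/\sqrt{\varrho_0}$; this is exactly the order the paper emphasizes. Your explicit absorption of $\|\pi_0\|_\infty$ via Gagliardo--Nirenberg and $\|\pi_0'\|_2\leq\sqrt{\bar\varrho}\,\|\pi_0'/\sqrt{\varrho_0}\|_2$ is a cleaner presentation of a step the paper leaves implicit (it gets $\|\pi\|_\infty$ via Sobolev embedding after bounding $\pi_y/\sqrt{\varrho_0}$, which actually already requires some $L^1_t$ control of $\|\pi\|_\infty^2$ in the Gronwall coefficient, so your ordering — pointwise formula for $\|\pi\|_\infty$ first, then Gronwall for $\pi_y/\sqrt{\varrho_0}$ — is the logically tight one); otherwise the two proofs are identical.
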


\begin{proof}
The proof is almost the same as that for Proposition \ref{proppi},
and the only difference is that the role played by Proposition
\ref{propGphi} there is now played by Proposition \ref{estG}, and
we have to estimate the upper bound of $J$ on $[0,T]$, which is
automatically guaranteed there by the assumption, before proving the
estimates on $v_y$ and $\frac{v_{yy}}{\sqrt{\varrho_0}}$. Therefore,
we only sketch the proof here.

Repeating the arguments as those for
(\ref{pi.2}), (\ref{pi.3-1}), (\ref{pi.4}), (\ref{pi.5}), and (\ref{pi.6}), but applying Proposition \ref{estG}, instead of applying Proposition \ref{propGphi} there, one obtains
\begin{equation}
\sup_{0\leq t\leq T}\left\|\left(\frac{J_y}{\sqrt{\varrho_0}}, \pi,\frac{\pi_y}{\sqrt{\varrho_0}}\right)
\right\|_2+\int_0^T(\|\pi_t\|_2^4+\|\sqrt{\varrho_0}v_t\|_2^2)  dt\leq C,\label{NE1}
\end{equation}
here and throughout the proof of this proposition, $C$ is a positive constant depending only on
$\gamma,\mu,\bar\varrho,\mathcal E_0$, $\|\varrho_0\|_1$, $\|\pi_0\|_2$,
$\big\|\frac{\pi_0'}{\sqrt{\varrho_0}}\big\|_2$, and $T$, and it 
is continuously increasing with respect to $T\in[0,\infty)$. Thanks to
this estimate and the Sobolev embedding inequality, we have
$\displaystyle\sup_{0\leq t\leq T}\|\pi\|_\infty\leq C$.
Noticing that $J(y,t)=\exp\left\{\frac1\mu\int_0^t(G+\pi)d\tau\right\}$,
it follows from Proposition \ref{estG} and (\ref{NE1})
that $\displaystyle\sup_{0\leq t\leq T}\|J\|_\infty\leq C$.
Thanks to (\ref{NE1}) and $\displaystyle\sup_{0\leq t\leq T}(\|\pi\|_\infty+\|J\|_\infty)\leq C$, repeating the arguments as those
for (\ref{J.1-1}), (\ref{pi.5-1}), and (\ref{pi.7}), but applying Proposition \ref{estG}, instead of applying Proposition \ref{propGphi} there, 
one can get 
\begin{equation*}
  \sup_{0\leq t\leq T}\|(J-1,J_t,v_y)\|_2^2+\int_0^T\left\|\frac{v_{yy}}{
  \sqrt{\varrho_0}}\right\|_2^2dt\leq C.
\end{equation*}
Thus, Proposition \ref{estother} is proved.
\end{proof}

The next proposition will be the key to show the global in time
boundedness of the entropy.

\begin{proposition}
  \label{estGextra}
Let the assumption in Proposition \ref{basic} hold. Assume in addition that (H3) holds and $\frac{G}{{\varrho_0^{\frac\delta2}}}\in L^2(0,T;L^2)$. Then, we have the following estimate:
$$
\sup_{0\leq t\leq T}\left\|\frac{ G}{\varrho_0^{\frac\delta2}}\right\|_2^2
+\int_0^T\left\|\frac{G_y}{\varrho_0^{\frac{\delta+1}{2}}} \right\|_2^2dt
\leq C \left\|\frac{ G_0}{\varrho_0^{\frac\delta2}}\right\|_2^2,
$$
for a positive constant $C$ depending only on
$\gamma,\mu,\delta,K_0,\bar\varrho,\mathcal E_0$, $\|\varrho_0\|_1$, $\|\pi_0\|_2$,
$\big\|\frac{\pi_0'}{\sqrt{\varrho_0}}\big\|_2$, and $T$, and the constant $C$, viewing as a function of $T$,
is continuously increasing with respect to $T\in[0,\infty)$.
\end{proposition}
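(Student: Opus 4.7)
The strategy is to follow the proof of Proposition \ref{propGex} line by line, with the modifications needed to accommodate whole-line integration in the presence of vacuum at infinity. Since the hypotheses of Proposition \ref{basic} are in force, conditions (H1)--(H4) are all available; in particular, the lower bound $\varrho_0(y)\geq A_0/(1+|y|)^2$ from (H4) allows the use of the same cutoff $\eta_r$ as in the proofs of Propositions \ref{basic}--\ref{estG} and yields the pointwise estimate $|\eta_r'|\leq M_0\sqrt{\varrho_0}$ uniformly in $r\geq 1$, exactly as in (\ref{eta}).

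The key computation is to multiply (\ref{eqG}) by $JG\eta_r^2/\varrho_0^\delta$, integrate over $\mathbb R$, and mimic (\ref{G'.1})--(\ref{G'.3}) to obtain
\begin{align*}
&\frac{1}{2}\frac{d}{dt}\left\|\sqrt{\frac{J}{\varrho_0^\delta}}\,G\eta_r\right\|_2^2+\mu\left\|\frac{G_y\eta_r}{\varrho_0^{(\delta+1)/2}}\right\|_2^2\\
&\quad=\left(\frac{1}{2}-\gamma\right)\int_{\mathbb R}\frac{v_yG^2\eta_r^2}{\varrho_0^\delta}\,dy+\delta\mu\int_{\mathbb R}\frac{\varrho_0'GG_y\eta_r^2}{\varrho_0^{\delta+2}}\,dy-2\mu\int_{\mathbb R}\frac{GG_y\eta_r\eta_r'}{\varrho_0^{\delta+1}}\,dy.
\end{align*}
The first two terms are handled exactly as in (\ref{G'.4})--(\ref{G'.5}): substituting $v_y=J(G+\pi)/\mu$ and using $\pi\geq0$, $\gamma>1$ controls the first by $(\gamma/\mu)\|G\|_\infty\|\sqrt{J/\varrho_0^\delta}\,G\eta_r\|_2^2$, while (H3) in the equivalent form $|\varrho_0'|\leq K_0\varrho_0^{3/2}$ and Young's inequality absorb half of the dissipation and leave a remainder proportional to $\|\sqrt{J/\varrho_0^\delta}\,G\eta_r\|_2^2$. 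The third (boundary) term is new; Young's inequality and $|\eta_r'|\leq M_0\sqrt{\varrho_0}$ yield
$$\left|2\mu\int_{\mathbb R}\frac{GG_y\eta_r\eta_r'}{\varrho_0^{\delta+1}}\,dy\right|\leq\frac{\mu}{4}\left\|\frac{G_y\eta_r}{\varrho_0^{(\delta+1)/2}}\right\|_2^2+4\mu M_0^2\int_{r\leq|y|\leq 2r}\frac{G^2}{\varrho_0^\delta}\,dy.$$

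Integrating in time, sending $r\to\infty$ (with the cutoff remainder vanishing by the standing hypothesis $G/\varrho_0^{\delta/2}\in L^2(0,T;L^2)$ and the left-hand side converging by monotone convergence), I obtain the whole-line inequality
$$\left\|\sqrt{\frac{J}{\varrho_0^\delta}}\,G\right\|_2^2(t)+\mu\int_0^t\left\|\frac{G_y}{\varrho_0^{(\delta+1)/2}}\right\|_2^2\,ds\leq\left\|\frac{G_0}{\varrho_0^{\delta/2}}\right\|_2^2+C\int_0^t(1+\|G\|_\infty)\left\|\sqrt{\frac{J}{\varrho_0^\delta}}\,G\right\|_2^2\,ds.$$
Since Proposition \ref{estG} supplies $\int_0^T\|G\|_\infty^4\,dt\leq Ce^{CT}\|G_0\|_2^4$, one has $\int_0^T(1+\|G\|_\infty)\,dt<\infty$, and Gronwall's inequality closes the estimate. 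The two-sided pointwise bound on $J$ afforded by Propositions \ref{lbdJ} and \ref{estother} allows one to replace $\|\sqrt{J/\varrho_0^\delta}\,G\|_2$ by $\|G/\varrho_0^{\delta/2}\|_2$ up to constants depending continuously on $T$, producing the desired inequality.

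The principal technical obstacle is the limit passage $r\to\infty$, specifically the control of the cutoff integral $\int GG_y\eta_r\eta_r'/\varrho_0^{\delta+1}\,dy$, whose $G_y$-factor has no a priori tail decay. The resolution rests on two structural facts: the $\sqrt{\varrho_0}$ bound on $|\eta_r'|$ afforded by (H4), which converts the cutoff error into a tail of $G^2/\varrho_0^\delta$ (instead of the unmanageable $G^2/\varrho_0^{\delta+1}$), and the standing hypothesis $G/\varrho_0^{\delta/2}\in L^2(0,T;L^2)$, which then forces this tail to vanish. Without either of these inputs the limit cannot be taken and the Gronwall step breaks down; together, they make the whole-line estimate essentially a routine extension of Proposition \ref{propGex}.
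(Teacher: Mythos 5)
Your argument is correct and follows essentially the same route as the paper: multiply (\ref{eqG}) by $JG\eta_r^2/\varrho_0^\delta$, use $|\varrho_0'|\leq K_0\varrho_0^{3/2}$ and $|\eta_r'|\leq M_0\sqrt{\varrho_0}$ (from (H3) and (H4)) to control the derivative and cutoff terms, pass to the limit $r\to\infty$ via the standing hypothesis $G/\varrho_0^{\delta/2}\in L^2(0,T;L^2)$, and close with Gronwall. The one (immaterial) deviation is that you import the local Proposition \ref{propGex} treatment of the convection term, substituting $v_y=\tfrac{J}{\mu}(G+\pi)$ and invoking $\pi\geq 0$ to obtain the coefficient $\|G\|_\infty$ with $\int_0^T\|G\|_\infty\,dt<\infty$ from Proposition \ref{estG}, whereas the paper keeps $v_y$ intact and uses $\int_0^T\|v_y\|_\infty\,dt\leq C$ supplied by Proposition \ref{estother}; both choices of Gronwall coefficient yield the stated bound.
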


\begin{proof}
  Let $\eta_r$ be the same function as in the proof of Proposition \ref{basic}. Note that $G$ satisfies (\ref{eqG}). Multiplying equation (\ref{eqG}) by $\frac{JG\eta^2_r}{\varrho_0^\delta}$, and integrating the resultant over
$\mathbb R$, it follows from integration by parts that
\begin{equation}
  \int_{\mathbb R}\frac{JG\eta_r^2}{\varrho_0^\delta}G_t dy +
  \mu\int_{\mathbb R}\frac{G_y}{\varrho_0}\left(\frac{G\eta_r^2}{\varrho_0^\delta}\right)_y
  dy=-\gamma\int_{\mathbb R}v_y\frac{G^2\eta_r^2}{\varrho_0^\gamma}dy.
  \label{E1}
\end{equation}
Direct calculations yield
\begin{equation}
  \label{E2}
  \int_{\mathbb R}\frac{G_y}{\varrho_0}\left(\frac{G\eta_r^2}{\varrho_0^\delta}\right)_y
  dy=\int_{\mathbb R}\frac{G_y}{\varrho_0}\left[
  \left(\frac{G_y}{\varrho_0^\delta}-\delta\frac{\varrho_0'}{\varrho_0}
  \frac{G}{\varrho_0^\delta}\right)\eta_r^2+2\frac{G}{\varrho_0^\delta}
  \eta_r\eta_r'\right]dy.
\end{equation}
Using equation (\ref{lcnsJ}), one can get 
\begin{equation}
  \int_{\mathbb R}\frac{JG\eta_r^2}{\varrho_0^\delta}G_t dy
  =\frac12\left(\frac{d}{dt}\int_{\mathbb R}
  \frac{JG^2\eta_r^2}{\varrho_0^\delta}dy-\int_{\mathbb R} v_y\frac{G^2 \eta_r^2}{\varrho_0^\delta}dy\right).\label{E3}
\end{equation}
Plugging (\ref{E2}) and (\ref{E3}) into (\ref{E1}), one can get from
the assumption $\big|\big(\tfrac{1}{\sqrt{\varrho_{0}}}\big)'\big|\leq\frac{K_0}{2}$, or equivalently $|\varrho_0'|\leq K_0\varrho_0^{\frac32}$, and the Cauchy
inequality that
\begin{eqnarray}
  &&\frac12\frac{d}{dt}\int_{\mathbb R}
  \frac{JG^2\eta_r^2}{\varrho_0^\delta}dy+\mu\int_{\mathbb R}\frac{G_y^2\eta_r^2}{\varrho_0^{\delta+1}}dy\nonumber\\ &=&\left(\frac12-\gamma\right) \int_{\mathbb R} v_y\frac{G^2 \eta_r^2}{\varrho_0^\delta}dy+\mu\int_{\mathbb R}\left(\delta\frac{\varrho_0'}{\varrho_0}\eta_r^2
  -2\eta_r\eta_r'\right)
  \frac{GG_y}{\varrho_0^{\delta+1}}dy\nonumber\\
  &\leq&\gamma\int_{\mathbb R} |v_y|\frac{G^2 \eta_r^2} {\varrho_0^\delta}dy+\mu\int_{\mathbb R}(\delta K_0\sqrt{\varrho_0}\eta_r^2+2\eta_r|\eta_r'|)\frac{GG_y} {\varrho_0^{\delta+1}}dy\nonumber\\
  &\leq&\frac\mu2\int_{\mathbb R}\frac{G_y^2\eta_r^2}{\varrho_0^{\delta+1}}dy+C\int_{\mathbb R}\left[(|v_y|+1)\frac{G^2\eta_r^2}{\varrho_0^{\delta}}+ \frac{G^2}{\varrho_0^{\delta+1}}|\eta_r'|^2\right]dy,\label{E4}
\end{eqnarray}
for a positive constant $C$ depending only on $\gamma,\mu,\delta$, and $K_0$.
It follows from $supp\,\eta_r\subseteq(-2r,r)\cup(r,2r)$ and (\ref{eta})
that
$$
\int_{\mathbb R}\frac{G^2}{\varrho_0^{\delta+1}}|\eta_r'|^2 dy
\leq M_0^2\int_{r<|y|<2r}\frac{G^2}{\varrho_0^{\delta}}dy,\quad r\geq1.
$$
This, together with Proposition \ref{lbdJ}
and (\ref{E4}), implies that
\begin{eqnarray}
  &&\frac{d}{dt}\left\|\sqrt{\frac{J}{\varrho_0^\delta}}G\eta_r\right\|_2^2
  +\mu\left\|\frac{G_y\eta_r}{\varrho_0^{\frac{\delta+1}{2}}}\right\|_2^2
  \nonumber\\
  &\leq&C(\|v_y\|_\infty+1) \left\|\sqrt{\frac{J}{\varrho_0^\delta}}G\eta_r\right\|_2^2 +
  CM_0^2\int_{r<|y|<2r}\frac{G^2}{\varrho_0^{\delta}}dy,\label{E5}
\end{eqnarray}
for $r\geq1$, and for a positive constant $C$ depending only on
$\gamma,\mu,\delta,K_0,\mathcal E_0$, and $\|\varrho_0\|_1$. By
Proposition \ref{estother}, it follows from the Sobolev embedding
inequality that $\int_0^T\|v_y\|_\infty dt\leq C$, for a positive
constant $C$ depending only on
$\gamma,\mu,\bar\varrho,\mathcal E_0$, $\|\varrho_0\|_1$, $\|\pi_0\|_2$,
$\big\|\frac{\pi_0'}{\sqrt{\varrho_0}}\big\|_2$, and $T$, and $C$
is continuously increasing with respect to $T\in[0,\infty)$. Thanks to this, by applying the Gronwall inequality to (\ref{E5}), we obtain
$$
\sup_{0\leq t\leq T}\left\|\sqrt{\frac{J}{\varrho_0^\delta}}G\eta_r\right\|_2^2
+\int_0^T\left\|\frac{G_y\eta_r}{\varrho_0^{\frac{\delta+1}{2}}} \right\|_2^2dt
  \leq C\left( \left\|\frac{ G_0}{\varrho_0^{\frac\delta2}}\right\|_2^2 +M_0^2\int_0^T
  \int_{r<|y|<2r}\frac{G^2}{\varrho_0^{\delta}}dydt\right),
$$
for $r\geq1$, from which, noticing that
the assumption  $\frac{G}{{\varrho_0^{\frac\delta2}}}\in L^2(0,T; L^2))$ implies that the last term of the right hand side of the above inequality tends to zero, as $r\rightarrow\infty$, we obtains by taking $r\rightarrow\infty$ that
$$
\sup_{0\leq t\leq T}\left\|\sqrt{\frac{J}{\varrho_0^\delta}}G\right\|_2^2
+\int_0^T\left\|\frac{G_y}{\varrho_0^{\frac{\delta+1}{2}}} \right\|_2^2dt
\leq C \left\|\frac{ G_0}{\varrho_0^{\frac\delta2}}\right\|_2^2,
$$
for a positive constant $C$ depending only on
$\gamma,\mu,\delta,K_0,\bar\varrho,\mathcal E_0$, $\|\varrho_0\|_1$, $\|\pi_0\|_2$,
$\big\|\frac{\pi_0'}{\sqrt{\varrho_0}}\big\|_2$, and $T$, and $C$
is continuously increasing with respect to $T\in[0,\infty)$. This, together
with Proposition \ref{lbdJ}, yields the desired conclusion.
\end{proof}

We are now ready to prove Theorem \ref{GLOBAL}:

\begin{proof}[\textbf{Proof of Theorem \ref{GLOBAL}}]
(i) By (i) of Theorem \ref{LOCAL}, there is a strong solution
$(J,v,\pi)$ to system (\ref{lcnsJ})--(\ref{lcnspi}), subject to
(\ref{IC}), on $\mathbb R\times(0,T_0)$, for some positive time $T_0$.
Extend this local strong solution to the maximal time of existence
$T_*$. Then, $(J,v,\pi)$ is a strong solution to system (\ref{lcnsJ})--(\ref{lcnspi}), subject to
(\ref{IC}), one $\mathbb R\times(0,T)$, for any $T\in(0,T_*)$.
Denote $\mathcal E_0=\int_{\mathbb R}\left(\frac{\varrho_0v_0^2}{2}+\frac{\pi_0}{\gamma-1}\right) dy.$
By Propositions \ref{basic}--\ref{momentum}, we have
\begin{eqnarray*}
&&\left[\int_{\mathbb R}\left(\frac{\varrho_0v^2}{2}+\frac{J\pi}{\gamma-1}\right) dy\right](t)=\mathcal E_0, \\
&&\inf_{y\in\mathbb R}J(y,t)\geq \exp\left\{-\frac{2\sqrt{2}}{\mu}\sqrt{\mathcal E_0}\|\varrho_0\|_1\right\},\\
&&\left(\int_{\mathbb R}\varrho_0vdy\right)(t)=\int_\mathbb R\varrho_0v_0dy,
\end{eqnarray*}
for any $t\in[0,T_*)$.
In order to prove the conclusion, it suffices to show that $T_*=\infty$.

Assume, by contradiction, that $T_*<\infty$. By Proposition \ref{estother}, we have
\begin{eqnarray*}
&&\sup_{0\leq t\leq T}\left\|\left(J-1,\frac{J_y}{\sqrt{\varrho_0}},J_t,v_y, \pi,\frac{\pi_y}{\sqrt{\varrho_0}}\right)
\right\|_2\leq C,\\
&&\int_0^T\left(
\|\pi_t\|_2^4+\left\|\left(\frac{v_{yy}}{\sqrt{\varrho_0}}, \sqrt{\varrho_0}v_t\right)\right\|_2^2\right)dt\leq C,
\end{eqnarray*}
for any $T\in[0,T_*)$, where $C$ is a positive constant depending only on
$\gamma$, $\mu$, $\bar\varrho$, $\mathcal E_0$, $\|\varrho_0\|_1$, $\|\pi_0\|_2$,
$\big\|\frac{\pi_0'}{\sqrt{\varrho_0}}\big\|_2$, and $T$, and this
constant $C$, viewing as a function of $T$, is
continuously increasing with
respect to $T\in[0,\infty)$. Since $T_*$ is a finite positive number,
we can see
the positive constants in the above are actually independent
of $T\in(0,T_*)$. Thanks to the above estimates, we have
$$
\inf_{y\in\mathbb R}J(y,T_1)>0,\quad J(\cdot,T_1)\in L^\infty, \quad\left(\frac{\partial_yJ}{\sqrt{\varrho_0}},
\sqrt{\varrho_0}v,\partial_yv,
\pi,\frac{\partial_y\pi}{\sqrt{\varrho_0}}\right)(\cdot,T_1)\in L^2,
$$
and
$$
\frac{1}{\displaystyle\inf_{y\in\mathbb R}J(y,T_1)}+\left(\|J\|_\infty+\|G\|_2
+\|\pi\|_\infty\right)(T_1)
\leq C_0,
$$
for a positive constant $C_0$
independent of $T_1\in(0,T_*)$. Therefore, by Theorem \ref{LOCAL},
there is a positive time $t_0$, such that, starting from time
$T_*-\frac{t_0}{2}$ , one can extend the strong solution $(J,v,\pi)$
uniquely to another time $T_*-\frac{t_0}{2}+t_0=T_*+ \frac{t_0}{2}>T_*$, 
which contradicts
to the definition of $T_*$. Therefore, $T_*=\infty$ and, thus,
one obtains a unique global strong solution to system (\ref{lcnsJ})--(\ref{lcnspi}), subject to (\ref{IC}).

(ii) We only prove that (\ref{ADR1}) holds for any finite $T\in(0,\infty)$, while the validity of (\ref{ADR2}) follows from (\ref{ADR1}) and (i), by exactly the same arguments as in the proof of Theorem \ref{LOCAL}. By (ii) of Theorem \ref{LOCAL}, there is a positive time $T$, such that (\ref{ADR1}) holds. Denote by $T_\ell$ the maximal
time, such that (\ref{ADR1}) holds, for any $T\in(0,T_\ell)$. In order to verify that (\ref{ADR1}) holds for any finite $T\in(0,\infty)$, it suffices to show that $T_\ell=\infty$. Assume, by contradiction, that $T_\ell<\infty$. By Proposition \ref{estGextra}, the following
estimate holds 
$$
\sup_{0\leq t\leq T}\left\|\frac{ G}{\varrho_0^{\frac\delta2}}\right\|_2^2
+\int_0^T\left\|\frac{G_y}{\varrho_0^{\frac{\delta+1}{2}}} \right\|_2^2dt
\leq C \left\|\frac{ G_0}{\varrho_0^{\frac\delta2}}\right\|_2^2,
$$
for any $T\in(0,T_\ell)$, where $C$ is a positive constant
depending only on
$\gamma,\mu,\delta,K_0,\bar\varrho,\mathcal E_0$, $\|\varrho_0\|_1, \|\pi_0\|_2,\big\|\frac{\pi_0'}{\sqrt{\varrho_0}}\big\|_2$, and $T$, and this constant $C$, viewing as a function of $T$,
is continuously increasing with respect to $T\in[0,\infty)$. Since $T_\ell$ is a positive finite number, the above constant $C$ is actually
independent of $T\in(0,T_\ell)$. Due to this fact, one can see that
$$
\frac{G(\cdot,T_\ell)}{\varrho_0^{\frac\delta2}(\cdot)}
\Bigg|_{t=T_\ell}\in L^2.
$$
With the aid of this, taking $T_\ell$ as the initial time, by Theorem \ref{LOCAL}, one can see that (\ref{ADR1}) holds for some other time $T_\ell'>T_\ell$, which contradicts to the definition of $T_\ell$. Therefore, one must have $T_\ell=\infty$, in other words, (\ref{ADR1}) holds for any finite time $T\in(0,\infty)$.
\end{proof}

\section*{Acknowledgments}
This work was supported in part by the
Zheng Ge Ru Foundation, Hong Kong RGC Earmarked Research Grants
CUHK-14305315 and CUHK-4048/13P, NSFC/RGC Joint Research Scheme Grant
N-CUHK443/14, and a Focus Area Grant from the Chinese University of Hong
Kong. J.L. would like to thank Mathematisches Forschungsinstitut
Oberwolfach and the Technische Universitat Darmstadt for the warm
and kind hospitalities where part of this work was done during the
visit to these two places as a Simons Visiting Professor (SVP).
This work was also supported in part by
the Direct Grant for Research 2016/2017 (Project Code: 4053216) from The
Chinese University of Hong Kong.

\end{document}